\def\blfootnote{\xdef\@thefnmark{}\@footnotetext}
\newcommand\ccnote{
    \blfootnote{\copyright\,\, Tom Ilmanen and Brian White}
    \blfootnote{\ccLogo\, \ccAttribution\,\, Licensed under a \href{https://creativecommons.org/licenses/by/4.0/}{Creative Commons Attribution License (CC-BY)}.}
}
\numberwithin{equation}{section}
\renewcommand{\le}{\leqslant}
\renewcommand{\ge}{\geqslant}
\renewcommand{\mathbb}{\varmathbb}
\newtheorem{theorem}{Theorem}[section]
\newtheorem{lemma}[theorem]{Lemma}
\newtheorem{corollary}[theorem]{Corollary}
\newtheorem{proposition}[theorem]{Proposition}
\newtheorem{definition}[theorem]{Definition}
\newtheorem{remark}[theorem]{Remark}
\address{Tom Ilmanen, Department of Mathematics, E. T. H. Z\"urich,  R\"amistrasse 101, 8092 Z\"urich, Switzerland}
\email{tom.ilmanen@math.ethz.ch}
\address{Brian White, Department of Mathematics,  Stanford University,  Stanford, CA 94305, USA} 
\email{bcwhite@stanford.edu}
\begin{document}

\thispagestyle{empty}

\begin{minipage}{0.28\textwidth}
\begin{figure}[H]
\includegraphics[width=2.5cm,height=2.5cm,left]{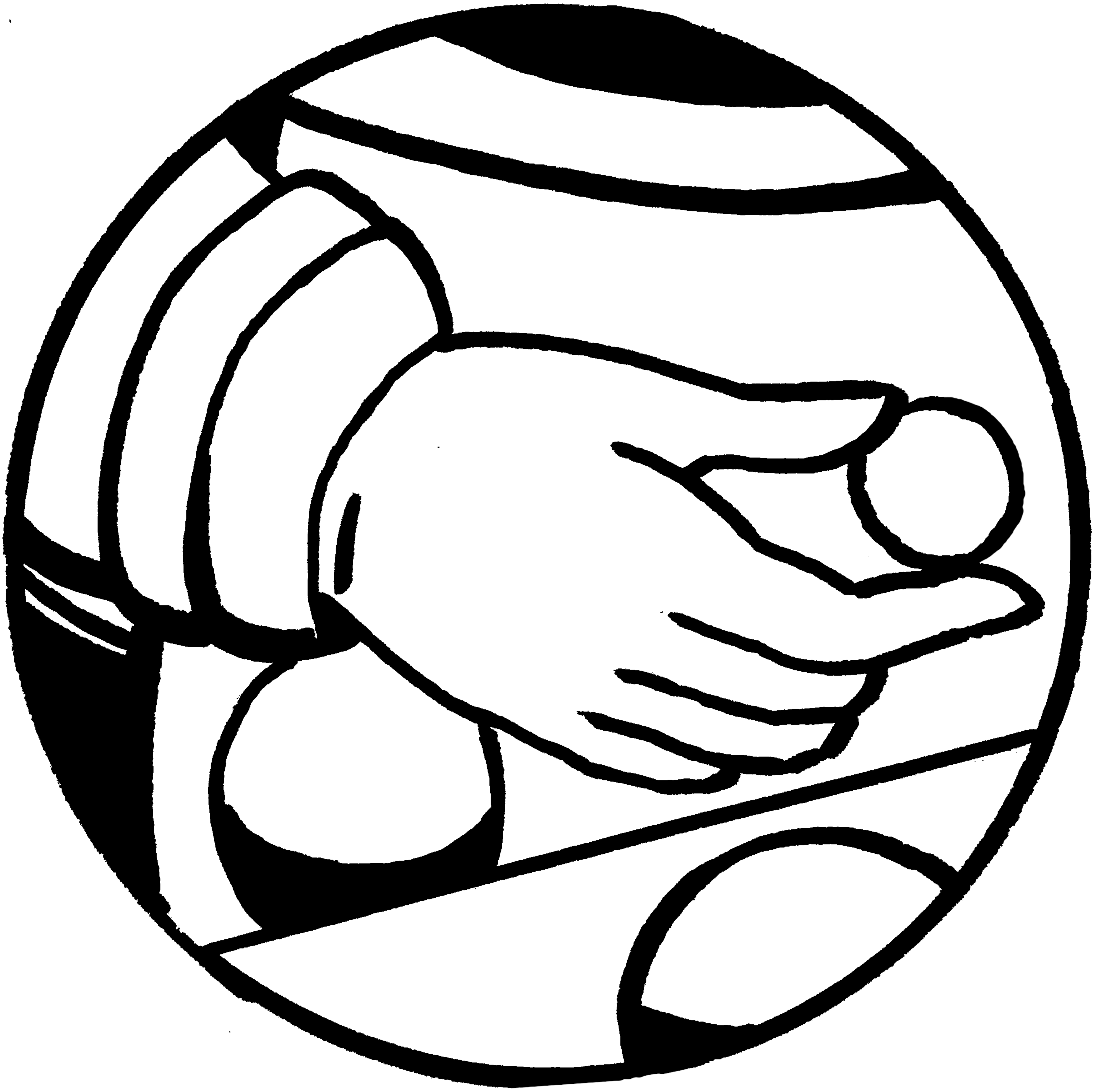}
\end{figure}
\end{minipage}
\begin{minipage}{0.7\textwidth} 
\begin{flushright}
Ars Inveniendi Analytica (2025), Paper No. 4, 32 pp.
\\
DOI 10.15781/cybg-mf32
\\
ISSN: 2769-8505
\end{flushright}
\end{minipage}

\ccnote

\vspace{1cm}


\begin{center}
\begin{huge}
\textit{Fattening in Mean Curvature Flow}


\end{huge}
\end{center}

\vspace{1cm}


\begin{minipage}[t]{.28\textwidth}
\begin{center}
{\large{\bf{Tom Ilmanen}}} \\
\vskip0.15cm
\footnotesize{E. T. H. Z\"urich}
\end{center}
\end{minipage}
\hfill
\noindent
\begin{minipage}[t]{.28\textwidth}
\begin{center}
{\large{\bf{Brian White}}} \\
\vskip0.15cm
\footnotesize{Stanford University}
\end{center}
\end{minipage}

\vspace{1cm}


\begin{center}
\noindent \em{Communicated by Carlo Sinestrari}
\end{center}
\vspace{1cm}


\noindent \textbf{Abstract.} \textit{For each $g\ge 3$, we prove existence of a compact, connected, smoothly embedded, genus-$g$
surface $M_g$ with the following property: under mean curvature flow, 
there is exactly one singular point at the first singular time, and the tangent flow at the singularity is 
given by a shrinker with genus $(g-1)$ and with two ends.  
Furthermore, we show that if $g$ is sufficiently large, then $M_g$ fattens at the first singular time.
As $g\to\infty$, the shrinkers converge to a multiplicity $2$ plane.}
\vskip0.3cm

\noindent \textbf{Keywords.} Mean curvature flow, fattening, level set flow. 
\vspace{0.5cm}


 \newtheorem*{theorem*}{Theorem}
     \newtheorem{claim}{Claim}
 
     \theoremstyle{definition}
    \newtheorem{example}  [theorem]   {Example}

\newcommand{\diag}{\operatorname{diag}}

\newcommand{\Aa}{\mathcal A} 

\newcommand{\Aac}{\mathcal{A}_{\rm con}}

\newcommand{\Aar}{\mathcal{A}_{\rm rect}}

 \newcommand{\area}{\operatorname{area}}

\newcommand{\pdf}[2]{\frac{\partial #1}{\partial #2}}
\newcommand{\pdt}[1]{\frac{\partial #1}{\partial t}}

 \newcommand{\Cc}{\mathcal C}
 
 \newcommand{\diam}{\operatorname{diam}}

    \newcommand{\dist}{\operatorname{dist}}

\newcommand{\ee}{\mathbf e}

 \newcommand{\eps}{\epsilon}

\newcommand{\entropy}{\operatorname{entropy}}
\newcommand{\mcd}{\operatorname{mcd}}
\newcommand{\mdr}{\operatorname{mdr}}
\newcommand{\tM}{\tilde M}
 \newcommand{\Mm}{\mathcal{M}}
   \newcommand{\MM}{\mathcal{M}}
\newcommand{\tF}{\tilde F}
\newcommand{\tmu}{\tilde \mu}

 \newcommand{\genus}{\operatorname{genus}}

\newcommand{\Gin}{\Gamma_\textnormal{in}}

\newcommand{\interior}{\operatorname{interior}}

\newcommand{\Rr}{\mathcal R}
 \newcommand{\RR}{\mathbf{R}}  
 
   \newcommand{\Ss}{\mathbf{S}}

\newcommand{\Tfat}{\operatorname{T_\textnormal{fat}}}
\newcommand{\tTfat}{\operatorname{\tilde T_\textnormal{fat}}}

 \newcommand{\TT}{\mathbf{T}}
 
 \newcommand{\Tan}{\operatorname{Tan}}
 
  \newcommand{\uu}{\mathbf{u}}
 
 \newcommand{\xx}{\mathbf{x}}

  \tableofcontents

\section{Introduction}

In~\cite{white-ICM}, we sketched a proof of the following theorem:

\begin{theorem*}
For every sufficiently large genus $g$, there exists a compact, smoothly embedded,
genus-$g$ surface $M$ in $\RR^3$ 
that fattens (i.e., develops an interior) under the level set mean curvature flow.
\end{theorem*}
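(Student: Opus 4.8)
\medskip
\noindent\textbf{Plan of proof.}
The argument has three ingredients. First, one needs, for each large $\gamma$, a smoothly embedded genus-$\gamma$ self-shrinker $\Sigma=\Sigma_\gamma$ with exactly two ends, each asymptotic to a fixed plane $P$ through the origin, such that $\Sigma_\gamma\to 2P$ (the multiplicity-two plane) and $\entropy(\Sigma_\gamma)\to 2$ as $\gamma\to\infty$. Such shrinkers arise from a gluing (doubling) construction: two copies of $P$ at heights $\pm\delta(\gamma)\to 0$, joined near the origin by $\gamma+1$ small necks placed symmetrically, and one arranges that $\Sigma_\gamma$ is invariant under the reflection $\rho$ in $P$ and under a rotation of order $\gamma+1$ about the axis $\ell$ orthogonal to $P$. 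Second, taking $\gamma=g-1$, one constructs $M_g$: truncate $\Sigma$ outside a large ball and glue the two truncated ends together by an annular collar, keeping the symmetries and smooth embeddedness; the collar adds one handle, so $M_g$ has genus $(g-1)+1=g$. After rescaling by a small $\epsilon_0>0$, the central part of $M_g$ is a large piece of $\Sigma$; globally $M_g$ looks like a thin round ``pita'' --- the boundary of the slab-shaped region $\Omega_g$ between two barely separated disks --- with $g$ tiny tunnels drilled through its center, and the outer collar is chosen ``fat'' enough (by taking $\epsilon_0$ small) not to become singular early.

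\medskip
\noindent\textbf{The flow up to the first singular time.}
Third, one studies the mean curvature flow of $M_g$. Using the symmetry and the avoidance principle, one shows that the flow is smooth on a maximal interval $[0,T)$, that the only singularity at time $T$ is at the center $q=\ell\cap P$, and that the tangent flow there is the shrinking flow $t\mapsto\sqrt{-t}\,\Sigma$. The idea is that near $q$ the flow shadows the self-similarly collapsing $\Sigma$-piece, which disappears at $q$ at time $T\approx\epsilon_0^2/2$, while the fat collar is still smooth at time $T$. Making ``shadows'' rigorous is the first technical point: $\Sigma$ is an \emph{unstable} shrinker, but (because $\Sigma$ is close to $2P$) its unstable modes in the symmetric, rotationally equivariant class are few and explicit, so a Lyapunov--Schmidt / graphical-tracking argument controls the rescaled flow until the $\Sigma$-piece vanishes.

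\medskip
\noindent\textbf{Fattening.}
The decisive point is that, when $g$ is large, the singularity at $(q,T)$ has at least two genuinely distinct weak continuations for $t>T$ --- morally the two resolutions of a pair of crossing planes, here made available by the fact that $\Sigma_\gamma$ is almost a double plane. In one continuation the $g$ tunnels pinch off and the two sheets move apart, so near $q$ the surface becomes a single genus-$0$ sheet and globally $M_g$ becomes a topological sphere that later rounds up. In the other, the two sheets --- which near $q$ have come together along an almost-doubled plane --- reconnect to one another around $q$ (a small tube joining the two faces is created), and globally $M_g$ becomes a torus-like surface. Each continuation is realized by an integral, unit-regular Brakke flow starting from $M_g$, obtained as a subsequential limit of the classical flows of small perturbations of $M_g$ pushed slightly to one side or the other of $\Sigma$. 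As these two Brakke flows both issue from $M_g$ and are distinct, the level-set flow of $M_g$ fattens at time $T$: equivalently, the biggest flow of $\overline{\Omega_g}$ and the complement of the biggest flow of $\overline{\RR^3\setminus\Omega_g}$ differ on a nonempty open set for $t$ slightly larger than $T$, and that set lies in $\{u(\cdot,t)=0\}$.

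\medskip
\noindent\textbf{Main obstacle.}
The hard part is the last step: to \emph{prove}, not merely believe, that the two continuations are distinct, i.e.\ that the inner and outer flows of $\Omega_g$ come apart. The difficulty is that the relevant scale degenerates as $g\to\infty$ (since $\Sigma_\gamma\to 2P$), so the singularity is highly non-generic --- a simultaneous collision of $g$ necks modelled on a near-double plane --- and the usual canonical-neighbourhood and blow-up regularity tools do not apply. What is needed is: sharp control of the flow of $M_g$ on the $\Sigma$-scale up to and just beyond $T$; explicit sub-/supersolution barriers (from translating or expanding solitons, or from pieces of nearby smooth flows) that are compatible with the global symmetric surface and that force the ``pull-apart'' resolution from one side of $M_g$ and the ``reconnect'' resolution from the other; and a quantitative estimate --- using the closeness of $\Sigma_\gamma$ to $2P$ and a count of the (symmetric, equivariant) unstable modes of $\Sigma_\gamma$ --- showing that the two barriers genuinely enclose a nonempty open region for $t>T$. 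A further substantial step, needed to make the statement precise, is the uniqueness/stability analysis identifying the tangent flow at $q$ with $\Sigma$ and the symmetry-plus-barrier argument showing $q$ is the \emph{only} singular point.
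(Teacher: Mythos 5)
Your proposal is a plan rather than a proof: each of its three main steps is left unexecuted, and the paper resolves each of them by a different (and more robust) route.

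First, you assert the existence of the genus-$\gamma$ shrinkers $\Sigma_\gamma$ with two ends via an unexecuted ``gluing (doubling) construction.'' This existence statement is itself the main content of part (i) of the theorem, not an available input. The paper obtains the shrinkers indirectly: it introduces a connected family of ``$g$-wheels'' (tubular neighborhoods of a torus with $g$ spokes), proves a Trichotomy Theorem classifying the first singularity as ``thin'' (horizontal cylinders), ``thick'' (vertical cylinders), or ``critical'' (a single singularity at the origin whose tangent flow is a genus-$(g-1)$ shrinker with two ends), shows that the thin and thick sets are open and nonempty, and concludes by connectedness that a critical wheel exists. No gluing and no direct construction of $\Sigma$ is ever performed.

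Second, your step of showing that the flow of a truncated-and-capped copy of $\Sigma$ ``shadows'' the self-shrinking flow and develops $\Sigma$ as its tangent flow is a genuine obstacle, not a technicality: $\Sigma$ is an unstable shrinker, so a generic nearby initial surface will \emph{not} reproduce it; the necks can pinch first, or the two sheets can separate. Your proposed Lyapunov--Schmidt tracking would require a spectral analysis of $\Sigma_\gamma$ that degenerates as $\gamma\to\infty$ (since $\Sigma_\gamma\to 2P$). The paper's continuity argument sidesteps this entirely: it never needs to identify \emph{which} surface is critical, only that one exists between the thin and thick regimes; the identification of the tangent flow at a critical wheel then comes from symmetry, the strong maximum principle (pancake and $g$-surface monotonicities), genus counting, and the Bamler--Kleiner multiplicity-one theorem.

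Third, you explicitly leave the fattening step open (``the hard part is the last step: to prove, not merely believe, that the two continuations are distinct''). The paper does not construct two distinct Brakke continuations. Instead it proves a clean sufficient condition (Theorem~\ref{fat-theorem}): if a compact set $K$ satisfies $\eps(K)\le\eps$ for a universal $\eps>0$ and contains a disk through the origin with boundary in its interior, then $\partial K$ fattens immediately. The proof is a compactness/``popping'' argument: if fattening failed, rescaling would produce a nontrivial weak set flow contained in the plane $\{z=0\}$ that persists for positive time while staying away from the origin at rate $t^{1/2}$, contradicting the fact that a weak set flow properly contained in a plane instantly becomes empty. Combined with Theorem~\ref{flat-cone-theorem} (the asymptotic cones $C(\Sigma_g)$ flatten as $g\to\infty$), this yields fattening for large $g$ without any analysis of continuations past the singular time. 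If you want to pursue your route, the distinct-continuations step is exactly the content that subsequent work of Chodosh, Daniels-Holgate, and Schulze supplies, but it is not elementary and is not needed for the paper's argument.
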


That theorem answered a fundamental question that was raised as soon as the level set flow was
introduced~\cite{chen-giga-goto, evans-spruck}.  
Indeed, the last line of~\cite{evans-spruck} is the conjecture
that a compact, smoothly embedded hypersurface in $\RR^n$ never fattens.

In this paper, we use recent work of Bamler and Kleiner
to give a simpler proof of a better result.
In particular, the new theorem gives a more explicit picture of how the fattening occurs.
It produces (and gives descriptions of) interesting singularities 
with shrinkers of every genus $\ge 2$, and it shows that
those singularities cause fattening if the genus is sufficiently large.
Specifically, we prove:

\begin{theorem}\label{intro-theorem}
\begin{enumerate}[\upshape (i)]
\item\label{intro-theorem-part-1} For each integer $g\ge 3$, there exists a compact, connected, smoothly embedded
surface $M_g$ of genus $g$ in $\RR^3$ with the following property: at the first singular time of 
the resulting mean curvature flow,
 there is exactly one singular point, and the tangent flow
at that singularity is given by a shrinker $\Sigma_g$ that has genus $g-1$ and two ends.
(See Figure~\ref{chopp}.)
\item\label{intro-theorem-part-2} As $g\to\infty$, $\Sigma_g$ converges to multiplicity $2$ plane.
\item\label{intro-theorem-part-3} For all sufficiently large $g$, $M_g$ fattens (under level set flow) at the first singular time.
\end{enumerate}
\end{theorem}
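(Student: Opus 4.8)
The plan is to realize $\Sigma_g$ as a tangent flow by capping off its two ends to form $M_g$, to identify the resulting singularity using Bamler--Kleiner's multiplicity-one theorem together with genus bounds, and then to extract fattening from the instability that $\Sigma_g$ acquires as it degenerates to a doubled plane.

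\emph{The shrinkers; part (ii).} I would take as input complete, embedded self-shrinkers $\Sigma_g\subset\RR^3$ of genus $g-1$ with two planar ends, built by desingularization (gluing): two copies of a plane with $g$ small disks deleted, joined across the $2g$ boundary circles by $g$ catenoidal necks placed with dihedral symmetry. Each $\Sigma_g$ carries a finite symmetry group $G_g$ containing the reflection $\rho$ across the plane $P$ that separates its two ends, and is $G_g$-rigid. Part (ii) is visible in the construction: as $g\to\infty$ the necks become small and equidistributed and the two sheets collapse onto $P$, so $\Sigma_g\to P$ with multiplicity two. The point to record for later is an \emph{instability}: for $g$ large, $\Sigma_g$ has a negative eigenvalue of its linearized shrinker operator, with eigenfunction amounting to moving the two sheets toward or away from each other --- the unstable mode already carried by the doubled plane $P$, and it is \emph{even} under $\rho$, so it is not removed by imposing the symmetry $G_g$.

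\emph{Part (i).} Given $\Sigma_g$, build $M_g$ by truncating the two planar ends of a dilate $\lambda\,\Sigma_g$ at a large radius and joining the two truncation circles by a smooth rotationally symmetric collar; joining the circles raises the genus by one, so $M_g$ is compact, connected, smoothly embedded, of genus $g$, and $G_g$-invariant. Near the origin $M_g$ agrees exactly with $\lambda\,\Sigma_g$, so the rescaled flow of $M_g$ is forced close to the fixed point $\Sigma_g$; one then tunes the single parameter $\lambda$ by a shooting argument (for $\lambda$ to one side of a critical value the flow pinches one way, to the other side the other way, at the critical value it forms $\Sigma_g$) --- and it suffices to tune one parameter precisely because $G_g$-symmetry removes the remaining potential unstable directions. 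With this choice: the flow $\{M_g(t)\}$ is smooth up to a first singular time $T$; the collar region, chosen mean-convex, stays smooth past $T$; and at the origin the flow becomes singular only at $t=T$, with tangent flow $\Sigma_g$. Multiplicity one here is Bamler--Kleiner's multiplicity-one theorem for surface flows in $\RR^3$; the available genus bounds for singularities of surface flows (no tangent flow can carry more genus than $M_g$), the scarcity of bounded-entropy shrinkers of genus $\le g-1$, and the near self-similarity of the flow as $t\to T$ then force the tangent flow to be $\Sigma_g$ and the time-$T$ singular set to be the single point $0$.

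\emph{Part (iii).} For $g$ large I would turn the instability of $\Sigma_g$ into fattening. Near the origin the singular-time slice $M_g(T)$ is the doubled plane $2P$, smooth except that the two sheets pinch together at $0$ (the scale-$0$ blow-down of $\Sigma_g$). That pinch admits two competing resolutions through embedded surfaces --- join the two sheets by a small neck (raising the genus of the post-singular surface), or cap them off so the sheets separate --- and because the self-similar resolution $\Sigma_g$ is \emph{unstable}, neither resolution is pulled back to $\Sigma_g$: each generates its own smooth forward flow. Producing two disjoint smooth mean curvature flows that straddle the singularity and realize the two resolutions then forces the inner and outer level-set evolutions of $M_g=\partial\Omega$ to disagree for $t>T$ (equivalently, the forward flow out of $M_g(T)$ is non-unique), so the level-set flow of $M_g$ has nonempty interior for $t>T$: it fattens. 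I expect this to be the main obstacle --- rigorously producing the two straddling flows, showing they are genuinely distinct, and ruling out any smooth flow that reconciles them. This is exactly where Bamler--Kleiner's regularity and partial-regularity theory is decisive: a hypothetical unique forward flow would be smooth away from a small singular set, which makes the topological comparison with the two resolutions legitimate and yields the contradiction, while their structural description of the flow near this (non-generic, unstable) singularity is what permits passage to the self-similar forward problem where the non-uniqueness lives.
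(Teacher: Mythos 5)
Your architecture is inverted relative to the paper's, and as written it has genuine gaps at each of the three load-bearing points. First, you take the shrinkers $\Sigma_g$ as \emph{input}, "built by desingularization." No such gluing construction is supplied or cited, and producing these shrinkers is essentially the hardest content of part (i): the paper never constructs them directly, but instead obtains them as output of a continuity argument. It introduces a connected one-parameter family of symmetric initial surfaces ($g$-wheels), proves a Trichotomy Theorem (every special $g$-wheel is "thin," "thick," or "critical," the last case being the one where a genus-$(g-1)$, two-ended shrinker appears at a single singular point at the origin), shows thin and thick are open conditions, exhibits both in the family, and concludes by connectedness that a critical wheel exists. (Also, the ends of $\Sigma_g$ are asymptotically conical, not planar; only the blow-down cones flatten as $g\to\infty$.) Second, your shooting argument for part (i) tunes a single parameter $\lambda$ and asserts that $G_g$-symmetry "removes the remaining potential unstable directions." That is an index statement about $\Sigma_g$ within the symmetric class for which you give no argument, and none is available; without it, one parameter cannot be expected to hit the (codimension-unknown) stable set. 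The paper's substitute is not spectral: the dichotomy between the two "escape" behaviors and the openness of each are proved by maximum-principle and graphical-structure arguments (the pancake and $g$-surface properties are preserved by the flow, force all singularities onto $\{z=0\}$ and onto the symmetry rays, and restrict the possible tangent flows via genus counting and Brendle's classification, with Bamler--Kleiner supplying multiplicity one). Your appeal to "scarcity of bounded-entropy shrinkers of genus $\le g-1$" to identify the tangent flow has no basis — there is no classification of positive-genus shrinkers.

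For part (iii), your route through the instability of $\Sigma_g$ and two competing resolutions of the singularity is in the spirit of Angenent--Chopp--Ilmanen, and you correctly identify that rigorously producing the two straddling flows is the main obstacle — but that obstacle is the whole proof, and Bamler--Kleiner does not supply it. The paper avoids this entirely with a "popping" argument that uses only part (ii): since the asymptotic cones $C(\Sigma_g)$ converge to the doubled plane, for large $g$ the time-$T$ slice $M(T)$ near the origin lies in a thin solid cone $V_\eps=\{|z|\le\eps|p|\}$ and contains a disk through $0$ in its compact region; a compactness argument for weak set flows (if fattening failed, one could extract a weak set flow contained in $\{z=0\}$ that persists, which is impossible) then forces $0$ into the interior of $F_t(M)$ for small $t>T$. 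If you want to salvage your outline, the two pieces you must supply are (a) an actual existence proof for $\Sigma_g$ arising in a flow (this is where the wheel/trichotomy machinery, or alternatively Ketover's minimax plus Lee--Zhao, is doing the work), and (b) a fattening mechanism that does not require constructing two smooth forward evolutions.
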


\begin{figure}
\begin{center}
\includegraphics[width=.55\textwidth]{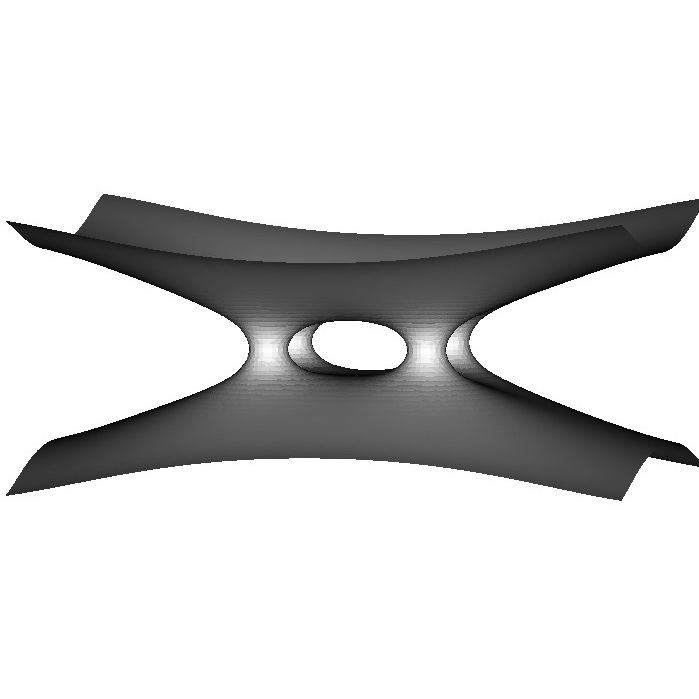}
\caption{\small A shrinker $\Sigma_4$ with genus $3$ and $2$ ends. (Courtesy of David Chopp.)}
\label{chopp}
\end{center}
\end{figure}

Most of the paper (\S\ref{overview-section}--\S\ref{existence-section})
  is devoted to proving~\eqref{intro-theorem-part-1}.
Most of that part of the paper only uses classical mean curvature flow, since it is about behavior
up to the first singular time.
The proof of~\eqref{intro-theorem-part-2} in~\S\ref{behavior-section}
 is much shorter.  
Finally, \eqref{intro-theorem-part-3} follows from~\eqref{intro-theorem-part-2} relatively
 easily by an argument analogous to the ``popping'' argument in~\cite{white-size}.
See~\S\ref{fattening-section}.
 
To prove Theorem~\ref{intro-theorem}, we introduce two classes of surfaces, ``pancakes''
(\S\ref{pancake-section})
and ``$g$-surfaces'' (\S\ref{g-surface-section}),
 and we analyze how such surfaces behave under mean curvature flow.
We hope that those analyses will have other applications.

Theorem~\ref{intro-theorem} asserts that $M_g$ fattens for all sufficiently large $g$.
For such $g$, it follows, by recent work~\cite{chodosh-et-al}
 of Chodosh, Daniels-Holgate, and Schulze, 
that  if $M$ is {\bf any} surface giving rise to a mean curvature  flow in which $\Sigma_g$ appears in a tangent flow, then $M$ fattens.

This paper does not give any bound on how large $g$ needs to be in order for fattening to occur.
But there is numerical evidence that $g=4$ is large enough.
In a prescient 1994 paper~\cite{chopp}, 
 Chopp used numerical methods to
 produce shrinkers that seem to have
the properties of the shrinkers for $g=2$ and $g=4$
that arise in this paper.  He used a continuity method similar
the one in this paper.  
He mentioned that his $g=2$ example might give rise to fattening.
He also noted that his $g=4$ example was closer to a double-density
horizontal plane than his $g=2$ example, and he wrote that (according to ``some preliminary
 investigations of Angenent and Ilmanen'')
  ``this makes it an even likelier candidate for developing interior'' (i.e., for fattening). 
  In another paper~\cite{aci}, Angenent, Chopp, and Ilmanen rigorously 
  proved that if a shrinker is close enough
  (in a certain sense) to a multiplicity-two plane, then the shrinker fattens. 
  They also showed numerically 
   that Chopp's $g=4$ example was ``close enough'' and therefore should fatten.
 
After this paper was written, 
we learned of recent interesting work~\cite{lee-zhao} by 
Tang-Kai Lee and Xinrui Zhao.
Among other things, they prove existence of compact, smoothly embedded
$m$-manifolds in $\RR^{m+1}$ (for $3\le m\le 6$) that fatten under mean curvature flow.
We also learned that Ketover has independently proved (by minimax
methods) existence of shrinkers similar (and perhaps identical) to the ones in this paper.
He proves that the shrinkers give noncompact examples of  fattening
 provided the genus is sufficiently large.
 
 We remark that combining the results of \cite{lee-zhao}, \cite{ketover}, and~\cite{chodosh-et-al}
 gives another proof (in addition to the one in this paper) of 
 existence of {\bf compact}, smoothly embedded surfaces in $\RR^3$ that fatten under mean curvature flow.
 Let $\Sigma$ be one of the fattening shrinkers produced by Ketover.
 By~\cite{lee-zhao}, there is a smoothly embedded closed surface in $\RR^3$ that develops
 a singularity with a tangent flow given by that shrinker.  By~\cite{chodosh-et-al}, the surface then fattens.

\section{Overview}\label{overview-section}

In this section, we describe the intuitions and the logic
 behind the proof of Theorem~\ref{intro-theorem}.
  In particular, we state (without proof) a sequence
of theorems that (we hope) will seem plausible and that 
easily imply Theorem~\ref{intro-theorem}\eqref{intro-theorem-part-1}.
In the subsequent sections, we prove those theorems.

We begin with some notation.
  If $\theta\in \RR$, let $P_\theta$ be the plane spanned by 
  $(\cos\theta, \sin\theta, 0)$ and $(0,0,1)$.
  If $\alpha<\beta$, let $W(\alpha,\beta)$ denote the open wedge
  \[
     W(\alpha,\beta)= \{ (r\cos\theta, r\sin\theta, z): 0<r, \, \alpha< \theta<\beta, \, z\in \RR\}.
  \]
  Thus $W(\alpha,\beta)$ is one of the connected components of the complement of $P(\alpha)\cup P(\beta)$.
  
We let $\Theta:\RR^3\to\RR^3$ be the map
\begin{equation}\label{Theta-equation}
  \Theta(x,y,z) = \left(\frac{d}{d\theta}\right)_{\theta=0} \Rr_\theta(x,y,z) =  (-y,x,0),
\end{equation}
where $\Rr_\theta:\RR^3\to\RR^3$ is rotation by $\theta$ around the $z$-axis, $Z$.

Let $g\ge 2$ be an integer. 
We shall be interested in surfaces $M$ that are symmetric under reflection in $P(0)$
and in $P(\pi/g)$.  Note that such surfaces are invariant under reflection in each
of the planes $P(k\pi/g)$, $k=0,\dots, g-1$,
and also under rotation by $2\pi/g$ about the $z$-axis.
Note also that such a surface $M$ is completely determined by the portion $M\cap \overline{W(0,\pi/g)}$,
since $M$ is obtained from that portion by repeated reflections.

\begin{definition}\label{wheel-definition}
A compact, connected, embedded surface $M$ in $\RR^3$
is called  a {\bf $g$-wheel} (or a {\bf $g$-wagon-wheel}, if you're not
into the whole brevity thing) provided
\begin{enumerate}
\item $M$ is $C^{1,1}$.
\item $M$ is invariant under reflection in the plane $\{z=0\}$.
\item $M$ is invariant under reflection in the planes $P(0)$ and $P(\pi/g)$.
\item\label{wheel-components} $M\cap\{z=0\}$ has exactly $(g+1)$ components, and one of  the components
    lies in $W(-\pi/g,\pi/g)$.
\item\label{wheel-e3} $\nu \cdot \ee_3\ge 0$ on $M\cap \{z>0\}$.
\item\label{wheel-nu}  $\nu\cdot \Theta \le 0$ on $M\cap W(0,\pi/g)$.
\end{enumerate}
The $g$-wheel is called a {\bf strict $g$-wheel}
 if it is smooth and if~\eqref{wheel-e3} and~\eqref{wheel-nu} hold with strict inequality
 at each point of the indicated sets.
\end{definition}

See Figure~\ref{wheels-figure}.

\begin{figure}[htbp]
\begin{center}
\includegraphics[height=3.4cm]{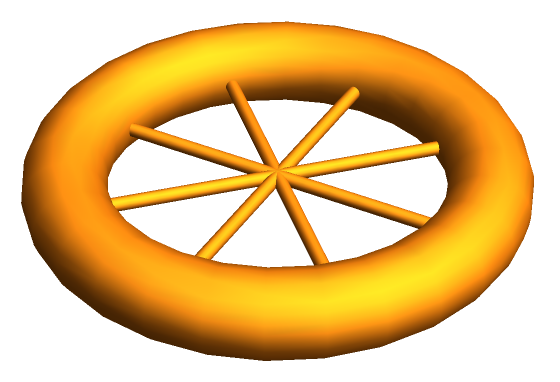}\quad \includegraphics[height=3.5cm]{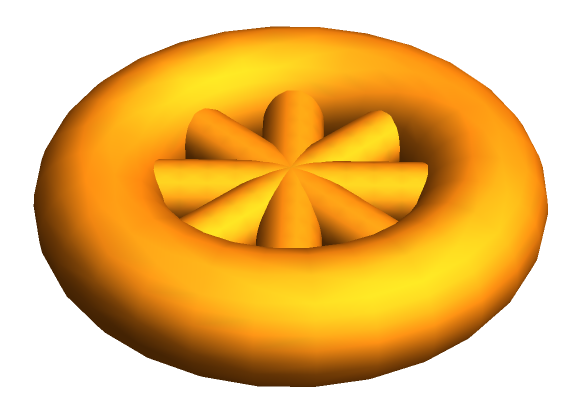}
\caption{Thin and Thick $8$-Wheels}.
\label{wheels-figure}
\end{center}
\end{figure}

If $M$ is a $g$-wheel, then it meets the plane $\{z=0\}$ orthogonally,
and thus each of the components of $M\cap\{z=0\}$ is a $C^{1,1}$ simple closed curve.
The curve is smooth if $M$ is smooth.

\begin{example}\label{example}
Fix an $R>0$ and consider the closed ball $B$ of radius $R$ centered at $(1+R,0,0)$.
Let $\TT=\TT_R$ be the solid torus $\cup_{\theta\in\RR} \Rr_\theta B$.
Thus $\TT\cap\{z=0\}$ is an annulus with inner radius $1$ and outer radius $1+2R$.

Let $K=K_g$ be the union of $\TT$ and the radial segments 
\[
\{s(\cos\theta, \sin\theta,0): 0\le s\le 1\}
\]
such that $\theta$ is an odd multiple of $\pi/g$. (There are $g$ such segments.)

Note that, among points $(r\cos \theta, r\sin\theta, 0)$ with $r\le 1$ and $\theta\in [-\pi/g,\pi/g]$,
there is a unique point $q$ that maximizes its distance to $K$.  Let $\eta=\eta_g$ be that maximum
distance.

For $0<\eps<\eta$, let 
\[
  K(\eps) = \{p: \dist(p,K) \le \eps\}.
\]
Note that $\partial K(\eps)$ is only piecewise smooth, but that it has all the other defining properties
of a $g$-wheel.  In fact, it is easy to smooth $\partial K(\eps)$ to produce a $g$-wheel.

For example, for $0 < s < \eps$, let $M(\eps,s)=M(\eps,s;R)$ be the set of points in $K(\eps)$
at distance $s$ from $\partial K(\eps)$.   Then $M(\eps,s)$ is a $C^{1,1}$ $g$-wheel for all sufficiently small~$s$.
The $C^{1,1}$ smoothness can be shown as follows. 
It is elementary (though somewhat tedious) to show that for all sufficiently small $s>0$, if $p\in K(\eps)$ and $\dist(p, \partial K(\eps))<s$,
then there is unique point in $\partial K(\eps)$ closest to $p$.
 It follows, by~\cite{federer-reach}*{Theorem~4.8(9)},  that $M(\eps,s)$ is $C^{1,1}$ for all sufficiently small $s>0$.

Figure~\ref{wheels-figure} shows $M(\eps,s)$ for two values of $\eps$ in the case $g=8$.
The $s>0$ in Figure~\ref{wheels-figure} is too small to be visible to the naked eye.
\end{example}

\begin{definition}\label{associated-definition}
If $M$ is a $C^{1,1}$ compact, embedded $n$-manifold in $\RR^{n+1}$,
the mean curvature flow {\bf associated} to $M$
is the smooth mean curvature flow $M(t)$, $t\in [0,T)$,
such that $M(0)=M$ and such that the flow becomes singular at time $T$.
The {\bf standard parametrization} of the flow $M(\cdot)$ is the
map $\xx:M\times [0,T)\to \RR^{n+1}$ such that
\begin{equation}\label{classical-mean curvature flow}
\begin{aligned}
&\xx_t = \Delta \xx, \\
&\xx(p,0) =p  \qquad (p\in M)
\end{aligned}
\end{equation}
where $\Delta$ is the Laplacian with respect to the metric $g(t)$ on $M$
such that $\xx(\cdot,t)$ maps $(M,g(t))$ isometrically onto $M(t)$.
\end{definition}

Mean curvature flow preserves symmetries.
Also, standard maximum principle arguments show that the
inequalities in the definition of $g$-wheel immediately become strict inequalities.
  Thus if $M(t)$, $t\in [0,T)$, is the mean curvature flow associated to a
  $g$-wheel $M$,  then $M(t)$ is a strict $g$-wheel
for each $t\in (0,T)$.

It is not hard to show (Theorem~\ref{pancake-regularity-theorem})
that the singularities all occur in the plane $\{z=0\}$.
Thus the singularities form on the curves $M(t)\cap \{z=0\}$. Note that these
curves are smooth for $t\in [0,T)$ since $M(t)$ intersects $\{z=0\}$ orthogonally.
The singular behavior is particularly nice if singularities do {\em not} form on the
outermost component of $M(t) \cap\{z=0\}$:

\newcommand{\Gout}{\Gamma_\textnormal{out}}

\begin{definition}\label{Gout-definition}
If $M$ is a $g$-wheel, we let $\Gout(M)$ denote the outermost
component of $M\cap\{z=0\}$, i.e., the boundary of the unbounded
component of $\{z=0\}\setminus M$.
Let $M(t)$, $t\in [0,T)$, be the mean curvature
flow associated to the $g$-wheel $M$.
Then we sometimes write $\Gout(t)$ for $\Gout(M(t))$.
Also, we define
\[
  \Gout(T) :=  \{q: \liminf_{t\to T} \dist(q,\Gout(t)) = 0 \}.
\]
 We say that $M$ is {\bf special}
if all the points $(p,T)$ with $p\in \Gout(T)$ are regular points of
the flow $M(\cdot)$.
\end{definition}

Note that if $M$ is special, then $\Gout(T)$ is a smooth, simple closed
curve.

\begin{theorem}[Trichotomy Theorem, Abbreviated Version]\label{trichotomy-theorem}
Suppose that $g\ge 3$ and that $M$ is a special $g$-wheel.  Let $M(t)$, $t\in [0,T)$,
  be the associated mean curvature
flow. 
Then exactly one of the following holds:
\begin{enumerate}[\upshape(1)]
\item\label{tri-thin} All tangent flows at time $T$ singularities 
  are shrinking cylinders with horizontal axes.
\item\label{tri-thick} All tangent flows at time $T$ singularities
   are shrinking cylinders with vertical axis.
     In this case, there are exactly $g$ points $q$
   such that $(q,T)$ is a singular point of the flow.
\item\label{tri-critical} The spacetime point $(0,T)$ is the only singularity of the flow, and if
    \[
       t\in (-\infty, 0] \mapsto |t|^{1/2} \Sigma
    \]
    is a tangent flow at $(0,T)$, 
     then $\Sigma$ has genus $g-1$ and two ends, one in $\{z>0\}$ and the other in 
    $\{z<0\}$.  Furthermore, $\Sigma^+:=\Sigma\cap\{z>0\}$ projects diffeomorphically
    onto the unbounded component of $\{z=0\}\setminus\Sigma$.
\end{enumerate}
\end{theorem}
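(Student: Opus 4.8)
The strategy is to use Bamler–Kleiner's recent theorem that tangent flows at the first singular time of a mean curvature flow of surfaces in $\RR^3$ are smooth shrinkers (possibly with multiplicity), together with the exploitation of the three discrete symmetries enforced on $g$-wheels — reflection in $\{z=0\}$, reflection in the wedge-bounding planes $P(0), P(\pi/g)$, and the derived rotation by $2\pi/g$ about $Z$. The overarching point is that the inequalities (5) and (6) in Definition~\ref{wheel-definition}, which become strict immediately and persist under the flow, are precisely what is needed to rule out all but these three possibilities for the shrinker $\Sigma$ appearing in a tangent flow.

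First I would record the structural constraints on any tangent flow $t\mapsto |t|^{1/2}\Sigma$ at a time-$T$ singular point $(q,T)$. Since singularities occur only in $\{z=0\}$ (stated in the excerpt), and $M(t)$ meets $\{z=0\}$ orthogonally, the shrinker $\Sigma$ is invariant under reflection in a horizontal plane through $q$; moreover $\Sigma$ inherits the graphicality condition $\nu\cdot\ee_3\ge 0$ on $\{z>0\}$ (condition (5)) and the rotational monotonicity $\nu\cdot\Theta\le 0$ on the wedge (condition (6)), both now strict on $M(t)$ for $t<T$ and hence holding weakly on $\Sigma$. A shrinker satisfying $\nu\cdot\ee_3\ge 0$ on $\{z>0\}$ and which is reflection-symmetric across $\{z=0\}$ has each of its two ``halves'' $\Sigma^\pm = \Sigma\cap\{z\gtrless 0\}$ weakly graphical over a planar region; I would classify such shrinkers by combining the Colding–Minicozzi/Wang/Brendle-type classification results available for low-entropy or rotationally-constrained shrinkers with a dimension count. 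Condition (6) forces $\Sigma$ to have the $2\pi/g$ rotational symmetry and, because the singular set of a $g$-wheel flow can be located only on the at most $(g+1)$ curves $M(t)\cap\{z=0\}$, the singular point $q$ is either on the outermost curve $\Gout$ — excluded by the hypothesis that $M$ is \emph{special} — or on one of the inner curves, of which there are essentially two symmetry types: the central curve in $W(-\pi/g,\pi/g)$ (forced to sit at the origin by rotational symmetry, giving case (3)) or a curve off the $z$-axis (which under $\Rr_{2\pi/g}$ has $g$ copies, giving the count in case (2)).

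Next I would do the case analysis by the shape of the shrinker. If $q=0$: the shrinker is rotationally symmetric under $\Rr_{2\pi/g}$, reflection symmetric in $\{z=0\}$ and in the wedge planes, and has $\nu\cdot\ee_3\ge 0$ above. If $\Sigma$ is a round sphere or a shrinking cylinder with vertical axis, the rotational symmetry is consistent; but a sphere would be a closed shrinker disjoint from $\{z=0\}$, contradicting the symmetry, while the vertical cylinder through $0$ would violate the fact (from the $g$-wheel geometry: the central component of $M(t)\cap\{z=0\}$ encircles the $z$-axis) unless this is exactly case (2) relocated — I'd have to check that a vertical cylinder centered on $Z$ is incompatible with the connectedness/genus bookkeeping at the central curve, pushing the vertical-cylinder scenario to off-axis points. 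The remaining possibility at $q=0$ is a nonplanar, non-cylindrical shrinker: using that $\Sigma^+$ is weakly graphical over a planar domain and the two ends lie in $\{z>0\}$ and $\{z<0\}$ respectively, a Gauss–Bonnet / Euler characteristic count on $\Sigma\cap\overline{W(0,\pi/g)}$ (one fundamental domain of the $2g$-fold symmetry group) pins the genus of $\Sigma$ to be $g-1$, and the graphicality of $\Sigma^+$ over the unbounded component of $\{z=0\}\setminus\Sigma$ follows from condition (5) being strict together with a strong maximum principle / Alexandrov-moving-planes type argument in the wedge, exactly as in the construction of the $g$-surfaces in \S\ref{g-surface-section}. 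If instead $q\ne 0$: rotational symmetry gives $g$ singular points; reflection in $\{z=0\}$ and the wedge planes, plus strict $\nu\cdot\ee_3>0$ nearby, forces the tangent flow to be a cylinder, and $\nu\cdot\Theta$ strict rules out a horizontal cylinder with axis \emph{not} perpendicular to the radial direction — the only surviving option with the correct reflection symmetries is a horizontal cylinder with axis tangent to the curve $M(t)\cap\{z=0\}$ (case (1), if this happens on \emph{every} inner curve including possibly the outermost-but-one) or a vertical cylinder (case (2)); a short argument that the three cases are mutually exclusive (one cannot have both a vertical and a horizontal cylinder appearing, by the connectedness of the singular set together with the upper semicontinuity of Gaussian density and the fact that the entropy of a vertical cylinder and of a horizontal cylinder are equal but their \emph{link} structure in $\{z=0\}$ differs) finishes the trichotomy.

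The main obstacle I expect is the classification step: ruling out all shrinkers \emph{other} than cylinders and the genus-$(g-1)$ two-ended shrinker at a general singular point, without a priori entropy bounds. The honest route is to lean on Bamler–Kleiner (multiplicity-one, smoothness of the tangent shrinker for surfaces in $\RR^3$) to reduce to smooth embedded shrinkers, then use the three symmetries plus the two differential inequalities (5)–(6) to cut down the possibilities: the weak graphicality $\nu\cdot\ee_3\ge 0$ kills all shrinkers with vertical ``waves,'' leaving only planes, vertical cylinders, spheres, and shrinkers with exactly two ends transverse to $\{z=0\}$; spheres and horizontal-wave configurations are killed by the wedge symmetry and condition (6); the plane is excluded at a genuine singularity by Brakke/White regularity (a multiplicity-one plane is a regular point); and a careful Euler-characteristic argument in the fundamental wedge domain identifies the genus. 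Getting the topological bookkeeping (genus $g-1$, two ends, $\Sigma^+$ graphical) airtight — rather than merely heuristic — is where most of the real work will be, and it is presumably the content of \S\ref{existence-section} that the full (unabbreviated) Trichotomy Theorem invokes.
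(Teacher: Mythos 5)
Your high-level strategy --- Bamler--Kleiner to get smooth multiplicity-one shrinkers, then the two normal-vector inequalities plus the reflection symmetries to constrain the tangent flow, then a case analysis on the location of the singular point --- is indeed the paper's strategy. But three load-bearing steps are missing or replaced by arguments that do not work. First, the angular localization: you need that every singular point $p\ne 0$ has $\theta(p)$ an \emph{exact} multiple of $\pi/g$, not merely that $p$ lies on one of the inner curves (which sweep out whole wedges). The paper gets this from the fact that for a strict $g$-surface the piece $M\cap W(0,\pi/g)$ meets each horizontal circle about $Z$ at most once (Theorem~\ref{theta-graph-theorem}), so $M(t)$ and its reflection in the vertical plane through $Z$ and $p$ intersect only inside that plane, and the Hopf-boundary-point principle of \cite{choi-h-h-w} then forces $(p,T)$ to be regular. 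You invoke a ``moving-planes type argument'' only to prove graphicality of $\Sigma^+$, not to pin down $\theta(p)$; without the localization the trichotomy is not exhaustive. Relatedly, your description of case (3) as coming from ``the central curve in $W(-\pi/g,\pi/g)$ forced to sit at the origin by rotational symmetry'' misreads the geometry: $\Gin$ is an off-axis curve with no rotational symmetry, and the origin singularity occurs when all $g$ inner curves collapse to $0$ simultaneously.

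Second, the genus count at the origin. To conclude $\genus(\Sigma)=g-1$ you must show that the parabolic rescalings capture all $g$ inner curves as closed curves in $\Sigma\cap\{z=0\}$; this requires the quantitative bound $\max\{|p|:p\in \Gamma(t)\setminus\Gout(t)\}=O((T-t)^{1/2})$, which the paper proves by contradiction: an over-stretched blow-up would live in $M\cap\{\dist(\cdot,Z)<\rho\}$, which has genus $0$ by Theorem~\ref{wheel-genus-theorem}, hence would be a $\Rr_{2\pi/g}$-invariant genus-$0$ shrinker, i.e.\ a vertical cylinder, whose $\{z=0\}$ slice would then contain no closed curve. Only then does $\genus(\Sigma)$ equal the number of slice components minus one, namely $g-1$. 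A Gauss--Bonnet count ``on a fundamental domain'' does not substitute for this. Third, the ``all tangent flows'' clauses and mutual exclusivity: the paper proves genuine equivalences (a vertical cylinder occurs at some time-$T$ singularity iff $\theta(p)$ is an even multiple of $\pi/g$ iff \emph{every} time-$T$ tangent flow is a vertical cylinder, and similarly for horizontal), using the $x$-axis intersection pattern of Lemma~\ref{x-axis-lemma} to exclude cylinders with radial or $y$-axis orientation at even-multiple points. Your exclusivity argument via ``connectedness of the singular set'' and ``equal entropies but different link structure'' is not a proof, and your claim that the horizontal cylinder's axis is tangent to the slice curve is wrong --- it is the radial line through $p$.
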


(The tangent flow in case~\eqref{tri-critical} is actually unique.
See Corollary~\ref{unique-corollary}.)

If we think of $M$ as a wagon wheel, then, intuitively,  \eqref{tri-thin} should happen
if the spokes of $M$ are very thin, and~\eqref{tri-thick} should  happen if the spokes are very thick.
That suggests the following definition:

\begin{definition}\label{tri-definition}
If $M$ is a special $g$-wheel, we say that $M$ is {\bf thin}, {\bf thick}, or {\bf critical} 
according to whether~\eqref{tri-thin}, \eqref{tri-thick}, or~\eqref{tri-critical} holds in
 Theorem~\ref{trichotomy-theorem}.
\end{definition}

Let $M(\eps,s)$ be as in Example~\ref{example} above.
If $\eps>0$ is sufficiently small, then $M(\eps,s)$ is thin.
It $\eps$ is sufficiently close to $\eta$ and if $s$ is sufficiently small, then $M(\eps,s)$
is thick.  
(These assertions follow from Theorems~\ref{thin-theorem} and~\ref{thick-theorem}.)

We shall show (Theorem~\ref{open-theorem})
  that the sets of thin $g$-wheels  and of thick $g$-wheels are  open subsets of the set
of all special $g$-wheels.   Now if $R$ is sufficiently large, then all the $M(\eps,s)$
in Example~\ref{example} are special (Lemma~\ref{special-lemma}).   Thus 
\begin{equation}\label{the-family}
    \{ M(\eps,s): \eps\in (0,\eta), \, s\in (0,\eps)\}
\end{equation}
is a set of special $g$-wheels that contains both thick and thin examples.
Since the family~\eqref{the-family} is connected, it must contain a critical $g$-wheel,
and hence Theorem~\ref{intro-theorem}\eqref{intro-theorem-part-1} holds.

To prove Theorem~\ref{intro-theorem}\eqref{intro-theorem-part-2}, i.e., that the $\Sigma_g$ converge as $g\to\infty$
to a multiplicity~$2$ plane, we first get curvature estimates and therefore smooth subsequential convergence
away from an annulus in the plane $\{z=0\}$.  Of course, the symmetries of the $\Sigma_g$ imply that limit must be rotationally
invariant about the $Z$-axis.  We then show that the annulus where the convergence is not smooth is actually just a circle,
and that the limit of the $\Sigma_g$ is the union of two smooth, rotationally invariant shrinkers, each of which contains that circle.
Finally, we show that each of those two shrinkers is the plane $\{z=0\}$.

The intuition for Theorem~\ref{intro-theorem}\eqref{intro-theorem-part-3}, i.e., that a critical $g$-wheel fattens if $g$ is large,
is as follows.
we show that, at the first singular time $T$, the surface $M(T)$ is singular at the origin, and the tangent cone $C$ to $M(T)$ at the origin
is the union of two graphs $z=f(x,y)$ and $z= -f(x,y)$, where $f\ge 0$, with strict inequality except at the origin.
It follows that $M(T)$ is, near the origin, the union of two graphs that touch only at the origin.  One way that the flow can extend past time $T$
is for those two graphs to detach from each other.  If $g$ is large, then (by Theorem~\ref{intro-theorem}\eqref{intro-theorem-part-2}),
that tangent cone $C$ is nearly flat.  It follows that there is another way for the flow to extend past time $T$: in a small ball around the origin, the union
of the two graphs can get replaced by an annulus.  Such non-uniqueness is equivalent to fattening.

An earlier version of this paper also handled the case $g=2$.
In particular, Theorems~\ref{intro-theorem} and~\ref{trichotomy-theorem}
remain true for $g=2$.  
But that case is somewhat special: see Remark~\ref{g2-remark}.
Assuming $g\ge 3$ simplifies the proofs, so we
have chosen to make that assumption.

\section{The Bamler-Kleiner Theory}\label{bamler-kleiner-section}

Let $M(t)$, $t \in [0,T)$, be a mean curvature flow of compact, smooth, embedded
surfaces in $\RR^3$.
Let $\Mm$ be the class of flows obtained from $M(\cdot)$ by any combination
of spatial rigid motions, spacetime translations, and parabolic dilations.  
Let $\Cc$ be the closure of $\Mm$ under
 taking weak limits (e.g, in the sense of Brakke flows).

\begin{theorem}\label{bk-theorem}
If $M'(t)$, $t\in I$, is in the class $\Cc$, then $M'(t)$ is smooth and has multiplicity one
for almost every $t\in I$.  
\end{theorem}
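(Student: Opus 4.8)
The plan is to deduce Theorem~\ref{bk-theorem} directly from the structure theory of Bamler--Kleiner for mean curvature flows of surfaces in $\RR^3$. The key input is their partial regularity result: for a (unit-regular, cyclic) Brakke flow in $\RR^3$ that arises as a limit of smooth compact flows, the singular set in spacetime has parabolic Hausdorff dimension at most $1$, and moreover the flow is \emph{smooth with multiplicity one} away from a closed set whose intersection with each time-slice is at most $1$-dimensional (in fact, their work shows the spacetime singular set has parabolic dimension $\le 1$, so its slices are generically $0$-dimensional). One should also invoke that such limit flows are weak set flows which agree with the level set flow, and in particular do not have a ``fattening'' interior for a.e.\ time; combined with the local regularity theorem (Brakke/White), multiplicity-higher regions are confined to the singular set.

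\begin{proof}[Proof sketch / plan]
First I would recall the relevant facts about the class $\Cc$: every $M'(\cdot)\in\Cc$ is a Brakke flow obtained as a weak limit of (rescalings and rigid motions of) the smooth compact flow $M(\cdot)$; since $M(\cdot)$ is unit-regular and cyclic mod~$2$, these properties pass to weak limits, so every element of $\Cc$ is a unit-regular, cyclic Brakke flow in $\RR^3$.

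Next I would apply the Bamler--Kleiner partial regularity theorem for such flows in $\RR^3$: the spacetime singular set $\Sing$ has parabolic Hausdorff dimension at most $1$. By Fubini/coarea for parabolic Hausdorff measure, this forces $\Sing\cap(\RR^3\times\{t\})$ to be finite (indeed $0$-dimensional) for a.e.\ $t\in I$, and hence empty-or-discrete; on the complement of $\Sing$, White's local regularity theorem (or Brakke's, given unit regularity) says the flow is a smooth multiplicity-one flow near such points.

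Then I would rule out higher multiplicity on the smooth part: at a spacetime point where the Gaussian density is strictly bigger than $1$, the point is by definition singular, so it lies in $\Sing$; thus for a.e.\ $t$, at every point of the $t$-slice (all regular) the density is $1$ and the flow is an embedded surface of multiplicity one. Putting these together: for a.e.\ $t\in I$, $M'(t)$ is smooth and has multiplicity one, which is exactly the claim.

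The main obstacle is making precise, and correctly citing, the Bamler--Kleiner statement in the form ``parabolic dimension of $\Sing$ is $\le 1$'' together with the measure-theoretic step that dimension $\le 1$ in spacetime implies a.e.\ time-slice has dimension $\le 0$ (hence is discrete, hence finite by compactness considerations in $\Cc$). One must be careful that elements of $\Cc$ need not be compact (they can be noncompact limits of dilations), so ``finite'' should be replaced by ``locally finite/discrete'', which still suffices for the a.e.-in-time smoothness-and-multiplicity-one conclusion. I expect the heavy lifting to be entirely inside the cited Bamler--Kleiner theory, with the remaining argument being the short Fubini-type reduction just described.
\end{proof}
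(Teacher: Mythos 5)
The paper does not actually prove Theorem~\ref{bk-theorem}: immediately after the statement it says only that the theorem ``is a special case of the recent work of Bamler and Kleiner,'' i.e.\ the result is imported wholesale as a citation. So there is no proof in the paper to compare yours against; what you have written is an attempt to fill in that citation, and the honest assessment is that essentially all of the content still sits inside the Bamler--Kleiner theory you invoke, exactly as the authors intend.

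As a reconstruction of how the cited result yields the statement, your sketch is broadly reasonable, but two points deserve care. First, the real content of the theorem as stated here is that it applies to \emph{every} element of the closure class $\Cc$ --- ancient flows, tangent flows, and iterated weak limits of rescalings --- not just the original compact flow $M(\cdot)$. This is precisely why Bamler--Kleiner formulate their multiplicity-one and partial-regularity results for blow-up limits of such flows; your write-up should make explicit that unit-regularity and cyclicity pass to weak limits and that the BK structure theory is stated at that level of generality, rather than only for the flow starting from $M$. Second, your Fubini step lands on a weaker (and slightly off) conclusion: if the spacetime singular set has parabolic Hausdorff dimension at most $1$, then its projection to the time axis is $1$-Lipschitz into the parabolically scaled $t$-line, so that projection has Euclidean Hausdorff dimension at most $1/2$ and hence Lebesgue measure zero; consequently for a.e.\ $t$ the time-$t$ slice of the singular set is \emph{empty}, not merely discrete. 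That is in fact what you need, since the theorem asserts $M'(t)$ is smooth everywhere (not smooth away from finitely many points) for a.e.\ $t$; your ``finite/discrete slice'' version would leave a residual gap at those exceptional points. With those two adjustments, and with the precise Bamler--Kleiner statement quoted correctly, your reduction is sound --- but it is doing no more than the paper's one-line citation does.
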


In particular, if a self-similar flow
\[
   t\in (-\infty,0] \mapsto |t|^{1/2}\Sigma
\]
is the $t<0$ portion of a flow  in the class $\Cc$, then $\Sigma$ is smooth and has multiplicity one.
Hence for every tangent flow to $M(\cdot)$ (or, more generally, for every tangent flow to any
  flow in $\Cc$), the corresponding shrinker is smooth and has multiplicity $1$.

Theorem~\ref{bk-theorem}
  is a special case of the recent work~\cite{bamler-kleiner} of Bamler and Kleiner.

\section{Pancakes}\label{pancake-section}

A {\bf pancake} in $\RR^3$ is a $C^{1,1}$ embedded, compact, connected surface
that is symmetric about the plane $\{z=0\}$ and that satisfies the inequality
\[
  \text{$\nu\cdot\ee_3\ge 0$ on $M^+:=M\cap\{z>0\}$}.
\]
If the pancake is smooth and if $\nu\cdot \ee_3>0$ at all points of $M^+$, 
we say that $M$ is a {\bf strict pancake}.

\begin{theorem}\label{instant-pancake-theorem}
Suppose that $M$ is a pancake and that $M(t)$, $t\in [0,T)$, 
is the associated mean curvature flow (see Definition~\ref{associated-definition}.)
Then $M(t)$ is a strict pancake for all $t\in (0,T)$.
\end{theorem}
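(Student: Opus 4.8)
The plan is to run two maximum-principle arguments: one to show that the reflection symmetry about $\{z=0\}$ persists (this is automatic, since mean curvature flow commutes with the isometry group of $\RR^3$, so if $M$ is symmetric then $M(t)$ is symmetric for all $t$), and one to show that the weak inequality $\nu\cdot\ee_3\ge 0$ on $M^+$ becomes a strict inequality for $t>0$. The geometric content is that the height function $z$ restricted to $M(t)$ has no interior critical points in $M^+(t)$ other than (possibly) on the symmetry curve $M(t)\cap\{z=0\}$, and that the superlevel-set structure is preserved. First I would observe that, by short-time existence and standard parabolic regularity, the associated flow $M(t)$ is smooth for $t\in(0,T)$; thus it suffices to prove the strict inequality for smooth flows, starting from a $C^{1,1}$ pancake.

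The key step is to apply the maximum principle to the quantity $u=\nu\cdot\ee_3$, or equivalently to study the evolution of the coordinate function $z$ on $M(t)$. Along mean curvature flow, the restriction of a linear function to the evolving surface satisfies a heat-type equation: if $\vv$ is a constant vector field, then $\phi:=\vv\cdot\xx$ satisfies $\partial_t\phi=\Delta_{M(t)}\phi$, and $\psi:=\vv\cdot\nu$ satisfies $\partial_t\psi=\Delta_{M(t)}\psi+|A|^2\psi$. Taking $\vv=\ee_3$, the function $u=\ee_3\cdot\nu$ solves a linear parabolic equation $\partial_t u=\Delta u+|A|^2 u$ on the closed manifold $M(t)$. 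On the symmetry plane, $M(t)$ meets $\{z=0\}$ orthogonally (because the reflection fixes $M(t)$), so $u=0$ there, and on $M^+(t)$ we have $u\ge 0$ at $t=0$ (after approximating the $C^{1,1}$ surface, $u$ is defined a.e.\ and the inequality holds weakly). By the strong maximum principle applied on the (compact, boundaryless) manifold $M(t)$—or, cleanly, on the domain $M^+(t)$ with the boundary condition $u=0$ on $M(t)\cap\{z=0\}$—either $u\equiv 0$ on a component of $M^+(t)$ for small $t$, which is impossible for a compact embedded pancake (it would force a whole component to be a vertical cylinder-like piece contradicting compactness/connectedness), or $u>0$ throughout the interior of $M^+(t)$ for all $t\in(0,T)$. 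Combined with the Hopf lemma at the symmetry curve, one gets strict positivity, hence $M(t)$ is a strict pancake: smooth, symmetric, with $\nu\cdot\ee_3>0$ on all of $M^+(t)$.

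The main obstacle is the low regularity of the initial surface: the strong maximum principle requires the flow to be smooth and $u$ to be a genuine subsolution/supersolution, but at $t=0$ we only have $M\in C^{1,1}$, so $u=\nu\cdot\ee_3$ is merely Lipschitz (bounded, defined a.e.) and $|A|^2$ is only $L^\infty$. The way around this is to note that the associated flow is smooth for $t\in(0,T)$ by definition and by parabolic smoothing; one then needs a short-time argument showing that the weak initial inequality $u(\cdot,0)\ge 0$ (in the sense that $M^+$ is a graph over a domain in $\{z=0\}$, or that $M$ lies locally above its tangent plane appropriately) is inherited by the smooth flow at times $t>0$ arbitrarily close to $0$. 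This can be done by an approximation argument—replacing $M$ by smooth strict pancakes $M_j\to M$ in $C^1$, running the (now classical) maximum principle for each, and passing to the limit using that strict positivity, once achieved, is an open condition propagated forward in time—or by a direct barrier/comparison argument using that $C^{1,1}$ regularity suffices to make $z$ a viscosity supersolution of the appropriate equation. A second, more minor, point to handle carefully is ruling out the degenerate alternative $u\equiv 0$ on a component of $M^+(t)$: here one uses that $M$ is compact, connected, and embedded, so no component of $M^+$ can be foliated by vertical lines in the $\ee_3$ direction.
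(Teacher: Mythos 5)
Your proposal is correct and follows essentially the same route as the paper: the evolution equation $(\partial_t-\Delta)u=|A|^2u$ for $u=\nu\cdot\ee_3$, the zero boundary condition on the symmetry curve $M(t)\cap\{z=0\}$, nonnegativity at $t=0$, and the strong maximum principle on $M^+\times(0,T)$. The paper dispatches the degenerate alternative simply by noting $u=1$ at the highest point of $M^+$ at $t=0$, and does not belabor the $C^{1,1}$ initial regularity, but these are only cosmetic differences from your argument.
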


\begin{proof}
Let $\xx: M\times [0,T)\to \RR^3$ be the standard parametrization
of the flow $M(\cdot)$.  (See Definition~\ref{associated-definition}.)

Let $M^+=M\cap\{z>0\}$ and $M(t)^+=M(t)\cap \{z>0\}$.
Since mean curvature flow preserves symmetries, $\xx(\cdot,t)$ maps
$\overline{M^+}$ diffeomorphically on $\overline{M(t)^+}$ for each $t$.

Let $\nu(p,t)$ denote the unit normal to $M(t)$ at $\xx(p,t)$.
Recall that $\nu$
satisfies the evolution equation
\[
  \partial_t \nu - \Delta \nu = |A|^2\nu,
\]
where $|A(p,t)|$ is the norm of the second fundamental form of $M(t)$ at $\xx(p,t)$.
Thus $u:=\nu\cdot \ee_3$ satisfies
\[
  u_t - \Delta u = |A|^2 u.
\]
Note that $u(\cdot,0)=1$ at the highest point on $M^+$
and that $u\equiv 0$ on $\partial(M^+) \times [0,T)$.
Also, $u\ge 0$ on $M^+\times \{0\}$ since $M$ is a pancake.
Consequently, by the strong maximum principle, 
 $u(p,t)>0$ for all $(p,t)$ with $p\in M^+$ and $t\in (0,T)$. 
\end{proof}

The proof of Theorem~\ref{instant-pancake-theorem} also gives the following:

\begin{lemma}\label{local-pancake-lemma}
Suppose $t\in [a,b] \mapsto M(t)$ is a smooth mean curvature flow of surfaces properly embedded
in an open subset $U$ of $\RR^3$.
Suppose that $U$ is an open subset of $\RR^3$
and that 
\[
   \nu\cdot \ee_3\ge 0
\]
on $M(t)$ for all $t\in [a,b]$.
If $\nu\cdot \ee_3=0$ at $p\in M(b)\cap U$, 
then there is an $\eps>0$ such that
\[
   \nu\cdot\ee_3\equiv 0
\]
on $(q,t)$ for $t\in [b,b-\eps]$ and $q\in M(t)\cap B(p,\eps)$.
\end{lemma}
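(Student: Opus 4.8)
The plan is to run the strong maximum principle argument from the proof of Theorem~\ref{instant-pancake-theorem}, but localized to a small backward space-time cylinder around $(p,b)$. First I would fix $r_0>0$ with $\overline{B(p,r_0)}\subset U$, which is possible since $U$ is open and $p\in U$. Using that the flow is smooth on $[a,b]$ and properly embedded in $U$ — so that $M(b)$ near $p$ is a single smooth sheet and the second fundamental form, hence the mean curvature vector $\vec H$, is bounded on compact space-time regions — I would produce $r\in(0,r_0)$, $\delta\in(0,b-a]$, an open disk $D$, a point $x_p\in D$, and a smooth map $F\colon D\times[b-\delta,b]\to\RR^3$ such that $F(\cdot,t)$ parametrizes an open neighborhood of $p$ in $M(t)$ contained in $B(p,r)$, $F$ moves by mean curvature flow (up to a tangential reparametrization), and $F(x_p,b)=p$. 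The boundedness of $\vec H$ is what guarantees that, as $t$ decreases from $b$, the relevant piece of $M(t)$ stays inside $B(p,r)$.

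Next I would transplant the computation in the proof of Theorem~\ref{instant-pancake-theorem} to $F$. Set $u(x,t):=\nu(F(x,t))\cdot\ee_3$, where $\nu$ is a unit normal to $M(t)$. Since $\nu$ satisfies $\partial_t\nu-\Delta\nu=|A|^2\nu$, in local coordinates on $D$ the function $u$ solves a linear, uniformly parabolic equation
\[
  \partial_t u = a^{ij}(x,t)\,\partial_i\partial_j u + b^i(x,t)\,\partial_i u + c(x,t)\,u,
\]
where $c=|A|^2\ge 0$ and all coefficients are smooth — hence bounded, with $(a^{ij})$ uniformly elliptic — on $\overline{D'}\times[b-\delta,b]$ for a slightly smaller disk $D'\ni x_p$. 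By hypothesis $u\ge 0$, and $u(x_p,b)=0$ since $\nu\cdot\ee_3$ vanishes at $p$.

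To finish I would invoke the parabolic strong maximum principle. Because the zeroth-order coefficient $c=|A|^2$ has the ``wrong'' sign, I would first replace $u$ by $v:=e^{-\lambda t}u$ with $\lambda:=\sup|A|^2$ over the compact region; then $v\ge 0$ solves an equation of the same type with zeroth-order coefficient $c-\lambda\le 0$, and $v(x_p,b)=0$ at a point interior in space at the terminal time. The strong maximum principle then forces $v$, hence $u$, to vanish on a backward space-time neighborhood of $(x_p,b)$; since the slices $M(t)\cap B(p,r)$ are connected and vary continuously in $t$, this neighborhood contains $\bigl(M(t)\cap B(p,\eps)\bigr)\times\{t\}$ for all $t\in[b-\eps,b]$, for a suitable $\eps>0$. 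Transporting this back through $F$ gives $\nu\cdot\ee_3\equiv 0$ on $M(t)\cap B(p,\eps)$ for $t\in[b-\eps,b]$, which is the assertion (with the time interval written, as surely intended, as $[b-\eps,b]$).

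I expect the only genuinely delicate step to be the first one: setting up the fixed local parametrization $F$ of the moving surfaces over a backward time interval, together with the uniform control on the coefficients of the resulting parabolic equation. Everything afterwards is the textbook strong maximum principle, with the $e^{-\lambda t}$ rescaling as the sole (routine) wrinkle needed to absorb the positive zeroth-order term.
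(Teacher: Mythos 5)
Your proposal is correct and follows essentially the same route as the paper: construct a local parametrization of the flow on a backward time interval near $(p,b)$ (the paper does this ``by ODE'' with normal velocity), derive the equation $(\partial_t-\Delta)u=|A|^2u$ for $u=\nu\cdot\ee_3$, and apply the parabolic strong maximum principle to the nonnegative solution vanishing at $(p,b)$. The $e^{-\lambda t}$ substitution and the coordinate-chart details you spell out are routine points the paper leaves implicit, and you are right that the time interval in the statement should read $[b-\eps,b]$.
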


\begin{proof}
Let $W$ be a neighborhood of $p$ in $M(b)$ such that $\overline{W}$ is a compact
subset of $U$.  By ODE, there is an $\eps>0$ and a 
smooth map
\[
  \xx: W\times [b-\eps,b] \to \RR^3
\]
such that 
\begin{gather*}
\xx(q,b)=q, \\
\xx(q,t) \in M(t), \, \text{and}\\
\partial_t \xx(q,t) \in \Tan(M(t), \xx(q,t))^\perp
\end{gather*}
for all $t\in [b-\eps,b]$ and $q\in W$.
It follows that 
\[
 \partial_t \xx = \Delta\xx.
\]
Thus $u(q,t):= \nu(M(t),\xx(q,t))\cdot \ee_3$ satisfies
\[
   (\partial_t - \Delta) u = |A|^2 u.
\]
The conclusion now follows immediately from the strong maximum principle.
\end{proof}

\begin{theorem}\label{graphical}
Suppose that $M$ is a strict pancake.
Then the projection 
 $\pi(x,y,z) = (x,y)$
maps $M^+:=M\cap \{z>0\}$ diffeomorphically onto its image.
\end{theorem}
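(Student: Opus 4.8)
The plan is to show that the Gauss map condition $\nu \cdot \ee_3 > 0$ on $M^+$ forces $\pi|_{M^+}$ to be both an immersion and injective, hence a diffeomorphism onto its image. First I would check the immersion part: at a point $p \in M^+$, the differential $d\pi_p$ restricted to $T_pM$ fails to be injective precisely when $T_pM$ contains a vertical vector, equivalently when the normal $\nu(p)$ is horizontal, i.e. $\nu(p)\cdot\ee_3 = 0$. Since $M$ is a strict pancake, $\nu\cdot\ee_3 > 0$ everywhere on $M^+$, so $d\pi_p$ is an isomorphism onto $\RR^2$ at every $p \in M^+$; thus $\pi|_{M^+}$ is a local diffeomorphism and $\pi(M^+)$ is open in $\RR^2$.

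The main work is injectivity. The idea is that each vertical line meets $M^+$ in at most one point. Fix $(x_0,y_0)$ and consider the function $z \mapsto$ (points of $M$ over $(x_0,y_0)$). I would argue as follows: since $M$ is compact and embedded, $M$ bounds a compact region $\Omega$; by the symmetry about $\{z=0\}$ and connectedness, $\Omega\cap\{z\ge 0\}$ is the region ``under'' $M^+$ in a suitable sense. For a vertical line $\ell$ through $(x_0,y_0)$ that meets $M^+$, consider the topmost intersection point $p$; at $p$ the outward normal has $\nu\cdot\ee_3 \ge 0$, and by strictness it is $> 0$, so near $p$ the surface $M^+$ is a graph $z = f(x,y)$ with $p = (x_0,y_0,f(x_0,y_0))$ and $\ell$ enters $\Omega$ as it crosses downward through $p$. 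Now suppose $\ell$ meets $M^+$ at a second point $p'$ below $p$. Walking down $\ell$ from $p$, one is inside $\Omega$ just below $p$; at the next intersection with $M$ one exits $\Omega$. If that next intersection is a point of $M^+$ (i.e. has $z>0$), the outward normal there points downward, contradicting $\nu\cdot\ee_3 > 0$. So between $p$ and $\{z=0\}$ the line $\ell$ meets $M$ only at $p$; combined with the reflection symmetry (which sends any hypothetical lower intersection in $M^+$ to one in $\{z<0\}$), this shows $\ell\cap M^+$ is the single point $p$. Hence $\pi|_{M^+}$ is injective.

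An alternative, cleaner way to package the injectivity step: define $h(x,y) = \max\{z : (x,y,z)\in M\}$ on $\pi(M)$; by compactness this is attained, and at the maximizing point the normal satisfies $\nu\cdot\ee_3 \ge 0$. One shows the maximizing point lies in $\overline{M^+}$ and, for $(x,y)$ with $h(x,y)>0$, strictness gives that $M$ is locally a graph near $(x,y,h(x,y))$; then an open-closed argument on $\{(x,y) : \pi^{-1}(x,y)\cap M^+ \text{ is a single point}\}$ inside the connected set $\pi(M^+)$ closes the argument, using the local-diffeomorphism property to propagate the single-point condition and using that $M^+$ has boundary only on $\{z=0\}$ to prevent sheets from appearing. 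The hard part is this global step — ruling out a second sheet of $M^+$ over some $(x,y)$ — since it is exactly here that one must use the global topology of the embedded surface (that $M$ separates $\RR^3$) together with the orientation/normal sign; the purely local Gauss-map condition is not by itself enough. I expect the compactness of $M$ and the separation property to do this work cleanly.
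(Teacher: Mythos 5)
Your argument is correct and is essentially the paper's own proof: strictness of the pancake makes $\pi|_{M^+}$ an immersion, and injectivity follows by taking the topmost point $p$ of a vertical line in $M^+$, noting the segment down to the next intersection lies in the compact region bounded by $M$, and deriving a sign contradiction for $\nu\cdot\ee_3$ at that lower point. The alternative ``open--closed'' packaging you sketch at the end is unnecessary; the separation argument in your second paragraph already closes the proof.
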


\begin{proof}
Let $K$ be the compact region bounded by $M$.
By hypothesis, $\pi|M^+$ is a smooth immersion, so it suffices
to show that it is one-to-one.  Suppose to the contrary that there
is a vertical line $L$ that intersects $M^+$ in two or more points.
Let $p$ be the highest point  in $M^+\cap L$, and let $q$ be the next highest point.
It follows that the closed interval $pq$ belongs to $K$, and thus that $\nu(p)\cdot\ee_3>0$
and $\nu(q)\cdot \ee_3<  0$, a contradiction.
\end{proof}

\begin{remark}\label{genus-remark}
Recall that the genus of an orientable surface $M$ 
   without boundary  is the maximum number 
of disjoint, simple closed curves in $M$ that do not disconnect
any connected component of $M$.  
It follows that  if $U$ is an open subset of $M$,
 then $\genus(U)\le \genus(M)$, and that 
if $U_1\subset U_2\subset \dots$  is an exhaustion of $M$ by open subsets,
then $\genus(M)=\lim \genus(U_i)$.
Recall also that the genus and the Euler characteristic $\chi$ are
related by
\[
   \genus(M) = c(M) - \frac12 \chi(M) - \frac12 e(M),
\]
where $c(M)$ is the number of connected components of $M$
and $e(M)$ is  the number
of ends of $M$. 
\end{remark}

\begin{theorem}\label{pancake-genus-theorem}
Suppose that $M$ is a strict pancake. 
Then 
\[
 \genus(M) = a - 1,
\]
where $a$ is the number of components of $M\cap\{z=0\}$.
\end{theorem}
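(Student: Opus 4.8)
The plan is to compute the Euler characteristic of $M$ by decomposing $M$ into its upper half $M^+ = M \cap \{z > 0\}$, its lower half $M^- = M \cap \{z < 0\}$, and the equatorial curves $M \cap \{z = 0\}$, and then invoke the genus-Euler characteristic formula from Remark~\ref{genus-remark} together with the symmetry of $M$ across $\{z = 0\}$. Since $M$ is connected and closed, that formula reads $\genus(M) = 1 - \tfrac12 \chi(M)$, so it suffices to show $\chi(M) = 2(2-a)$, i.e. $\chi(M^+) = 2 - a$ (the halves being diffeomorphic), where I regard $\overline{M^+}$ as a compact surface with $a$ boundary circles.

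First I would identify $\overline{M^+}$ explicitly. By Theorem~\ref{graphical}, the vertical projection $\pi$ maps $M^+$ diffeomorphically onto a planar open set $\Omega \subset \RR^2$; since $M$ meets $\{z = 0\}$ orthogonally along $a$ smooth simple closed curves, $\pi$ extends to a diffeomorphism of $\overline{M^+}$ onto $\overline{\Omega}$, and $\partial\Omega$ consists of the $a$ projected equatorial curves. Thus $\overline{M^+}$ is diffeomorphic to a compact planar domain bounded by $a$ disjoint simple closed curves. Such a domain is a disk with $a-1$ open subdisks removed (this is where I use that the $a$ curves are nested — which should follow because each equatorial curve bounds a disk in $\{z=0\}$, and by Theorem~\ref{graphical} applied appropriately, or more simply from the Jordan curve theorem since the curves are disjoint and $\overline{\Omega}$ is connected). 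A disk with $a-1$ holes has Euler characteristic $1 - (a-1) = 2 - a$.

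Then I would assemble: $M = \overline{M^+} \cup \overline{M^-}$, glued along the $a$ equatorial circles, so by inclusion-exclusion for Euler characteristic, $\chi(M) = \chi(\overline{M^+}) + \chi(\overline{M^-}) - \chi(M \cap \{z=0\}) = (2-a) + (2-a) - 0 = 2(2-a)$, using $\chi(S^1) = 0$ for each of the $a$ equatorial circles. Feeding this into $\genus(M) = 1 - \tfrac12\chi(M) = 1 - (2 - a) = a - 1$ gives the claim.

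The main obstacle is the topological identification of $\overline{M^+}$ as a planar domain with exactly $a-1$ holes, i.e. ruling out that the $a$ equatorial curves could project to a non-nested configuration. I expect this to come down to showing that $\overline\Omega = \pi(\overline{M^+})$ is connected (clear, since $\overline{M^+}$ is, being the image of a symmetric connected surface's closed half — this needs a small argument that $M^+$ itself is connected, which I would get from the pancake structure: $M$ connected and symmetric, with the ``top'' of $M$ a single graph over $\Omega$) and then quoting the classification of compact connected planar surfaces with boundary. Once connectivity is in hand, the rest is routine: a compact connected subsurface of $\RR^2$ (equivalently of $S^2$) with $b$ boundary components is a sphere with $b+1$ disks removed only if it is planar — here it is a closed region in the plane, so it is automatically a disk with $b-1$ holes, hence $\chi = 2 - b$.
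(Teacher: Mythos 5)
Your proof is correct and follows essentially the same route as the paper's: both compute $\chi(M)=4-2a$ by identifying $\pi(M^+)$ (via Theorem~\ref{graphical}) as a connected planar domain with $a$ boundary circles, hence $\chi=2-a$, and then apply the genus--Euler characteristic relation of Remark~\ref{genus-remark}. The only differences are cosmetic (closed halves plus inclusion--exclusion versus open halves plus the fact that deleting circles leaves $\chi$ unchanged), and you are in fact somewhat more careful than the paper about the connectivity of $M^+$.
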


\begin{proof}
Let $M^+=M\cap \{z: z>0\}$ and $M^- = M\cap\{z<0\}$.
Let  $W=\pi(M^+)=\pi(M^-)$.
Then $W$ is a topologically a disk with $a-1$ disks removed from it.
Thus 
\[
  \chi(M^+)=\chi(M^-)= \chi(W)=1- (a-1)= 2-a.
\]
Removing a simple closed curve from a surface does not change its Euler characteristic,
so
\[
 \chi(M) = \chi(M^+\cup M^-) =  4 - 2a.
\]
Thus, since $M$ is connected,
\[
  \genus(M)= 1 -\frac12\chi(M) = a - 1.
\]
\end{proof}

\begin{remark}\label{gout-remark}
The proof of Theorem~\ref{pancake-genus-theorem}
shows that $M\cap\{z=0\}$ has one outer component $\Gout(M)$, which is the boundary
of the unbounded component of $\{z=0\}\setminus M$; the other ``inner'' components
  of $M\cap\{z=0\}$
are contained in the disk whose boundary is $\Gout(M)$.   
Theorem~\ref{pancake-genus-theorem}
 states that the genus of $M$ is equal to the number of inner components
of $M\cap\{z=0\}$.
\end{remark}

\begin{theorem}\label{pancake-regularity-theorem}
Suppose that $M$ is a pancake,  and let $M(t)$, $t\in [0,T)$, 
be the associated mean curvature flow. 
Suppose that $p$ is a point with $z(p)\ne 0$.
Then $(p,T)$ is a regular point of the flow.
\end{theorem}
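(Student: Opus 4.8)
The plan is to show that the only singularities of the flow associated to a pancake occur in the plane $\{z=0\}$, by ruling out singular tangent flows at spacetime points $(p,T)$ with $z(p)\neq 0$. The key structural fact is that the pancake condition $\nu\cdot\ee_3\ge 0$ on $M^+$ is preserved by the flow (Theorem~\ref{instant-pancake-theorem}), and — crucially — it is inherited by tangent flows, since $\nu\cdot\ee_3\ge 0$ is a closed condition stable under the parabolic rescalings and weak limits that produce a tangent flow. So first I would argue that any tangent flow at $(p,T)$ is a mean curvature flow (or Brakke flow) $t\mapsto |t|^{1/2}\Sigma$ whose shrinker $\Sigma$ satisfies $\nu\cdot\ee_3\ge 0$ everywhere where the normal is defined. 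Here I would invoke Theorem~\ref{bk-theorem} (Bamler--Kleiner): $\Sigma$ is a smooth, multiplicity-one shrinker in $\RR^3$.

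Next I would classify smooth multiplicity-one shrinkers $\Sigma$ satisfying $\nu\cdot\ee_3\ge 0$. On such a shrinker, the function $u=\nu\cdot\ee_3$ satisfies the linear elliptic (drift) equation $L u = 0$ with $L = \Delta - \tfrac12 x\cdot\nabla + |A|^2 + \tfrac12$ (the Jacobi/stability-type operator that comes from the fact that vertical translation is a symmetry commuting with the rescaled flow), and $u\ge 0$. By the strong maximum principle for this operator, either $u\equiv 0$ or $u>0$ everywhere. If $u\equiv 0$, then $\ee_3$ is everywhere tangent to $\Sigma$, so $\Sigma$ is a cylinder over a planar shrinking curve, i.e. $\Sigma = \gamma\times\RR$ with $\gamma$ a self-shrinking curve in $\RR^2$; the only embedded such curves are the line and the circle (Abresch--Langer / Gage--Hamilton in the embedded case), giving $\Sigma$ a plane or the shrinking round cylinder with horizontal axis. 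If $u>0$ everywhere, then $\Sigma$ is (locally) a graph over the $\{z=0\}$ plane with $\ee_3$ pointing up along $\nu$; a mean-convex-type rigidity then forces $\Sigma$ to be a multiplicity-one plane (the only complete graphical shrinker is the plane through the origin; if it is not through the origin it is not a shrinker). In all cases $\Sigma$ contains a line, hence the tangent flow is a \emph{static or shrinking cylinder} in the generalized sense, and in particular $0\in\RR^3$ lies on $\Sigma$.

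The last step is the localization: a singularity at $(p,T)$ with $z(p)\neq 0$ would have a tangent flow that, by the above, is either a static multiplicity-one plane (which is smooth, hence $(p,T)$ is a regular point by White's regularity theorem / Brakke regularity — contradiction with singularity), or a shrinking round cylinder with horizontal axis. To exclude the horizontal cylinder I would use the pancake structure globally: by Theorem~\ref{graphical} (after the flow instantly becomes a strict pancake) $M(t)^+$ projects diffeomorphically to a planar domain, and near a point with $z\neq 0$ the surface $M(t)$ stays a graph $z=f(x,y,t)$ with $f$ bounded away from $0$ on a neighborhood; a horizontal shrinking cylinder forming at height $z(p)\neq 0$ is incompatible with $M(t)$ being graphical there (the cylinder is not a graph over any plane containing a horizontal direction perpendicular to its axis — its tangent flow would force $\nu\cdot\ee_3$ to vanish somewhere arbitrarily close to $p$, contradicting Lemma~\ref{local-pancake-lemma}, which would then force $\nu\cdot\ee_3\equiv 0$ on a whole spacetime neighborhood of $(p,T)$ inside $\{z>0\}$, impossible since $M(t)^+$ is a compact graph with a strict interior maximum of $z$). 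Hence no singularity at $(p,T)$, so $(p,T)$ is a regular point.

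The main obstacle is the classification of shrinkers with $\nu\cdot\ee_3\ge 0$ and the clean exclusion of the horizontal cylinder: showing $u\ge 0$ passes to the tangent flow is routine, and the strong maximum principle dichotomy is standard, but pinning down that the $u\equiv 0$ case and the $u>0$ case both yield shrinkers containing a line — and then arguing that such a tangent flow at $z(p)\neq 0$ contradicts the global graphical/pancake geometry — is where the real work lies. I expect this to be carried out using Lemma~\ref{local-pancake-lemma} together with the strong maximum principle to propagate $\nu\cdot\ee_3\equiv 0$ backward in time, deriving a contradiction with the existence of the strict interior maximum of $z$ on $M(t)^+$.
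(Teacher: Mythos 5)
Your approach---classify the tangent flow at $(p,T)$ using the inherited condition $\nu\cdot\ee_3\ge 0$ and rule out each singular model---is genuinely different from the paper's, which avoids classification entirely: the paper reflects $M(t)$ across the horizontal plane $P=\{z=z(p)\}$, uses Theorem~\ref{graphical} to see that near $p$ the flow and its reflection meet only in $P$, and concludes regularity from the Hopf boundary point principle of \cite{choi-h-h-w} (tameness being supplied by Bamler--Kleiner). Your route could in principle work, but as written the classification step is off. In the case $u\equiv 0$ the vector $\ee_3$ is everywhere \emph{tangent}, so $\Sigma$ is ruled by vertical lines: it is a vertical plane or a round cylinder with \emph{vertical} axis, not horizontal axis. (A horizontal round cylinder has $\nu\cdot\ee_3$ changing sign, so it is already excluded by the hypothesis and never needs a separate argument.) In the case $u>0$ you invoke ``the only complete graphical shrinker is the plane,'' but $u>0$ only makes $\Sigma$ locally graphical; it does not make $\Sigma$ an entire graph over $\RR^2$, which is what that rigidity statement requires. (A repairable route exists: balls about $p$ of radius less than $|z(p)|$ lie in $\{z>0\}$, where $M(t)^+$ is graphical, so the shrinker has genus $0$ and Brendle's theorem applies; but even this leaves the vertical cylinder standing.)

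The more serious gap is that the one singular model that genuinely survives the constraint $\nu\cdot\ee_3\ge 0$---the shrinking cylinder with \emph{vertical} axis---is never addressed; your final paragraph is devoted to excluding the horizontal cylinder, which is vacuous. Excluding a vertical neck at height $z(p)\ne 0$ is the real content of the theorem, and your proposed mechanism does not engage there: Lemma~\ref{local-pancake-lemma} and the strong maximum principle require $\nu\cdot\ee_3$ to \emph{vanish} at an actual point of $M(t)\cap\{z>0\}$, but by Theorem~\ref{instant-pancake-theorem} it is strictly positive there for every $t\in(0,T)$; in a cylindrical blow-up it merely tends to zero along the rescaling, which yields no contradiction. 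Nor does ``the cylinder is not a graph'' forbid it as a limit, since graphicality is not closed under smooth convergence (increasingly steep graphs over a thin annulus can converge to a vertical cylinder). This is precisely the configuration that the paper's reflection-plus-Hopf argument is designed to kill, and your proof needs some substitute for it.
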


\begin{proof}
Let $M'(t)$ be the image of $M(t)$ under reflection in the plane $P:=\{z=z(p)\}$.
By Theorems~\ref{instant-pancake-theorem} and~\ref{graphical}, 
\[
  M(t)\cap M'(t) \subset P
\]
for all $t\in (0,T)$.

By a version of the 
Hopf Boundary Point Principle~\cite{choi-h-h-w}*{Theorem~3.19}, $(p,T)$ is regular.
(One applies Theorem~3.19 of~\cite{choi-h-h-w} to the flows $M(\cdot)$ and $M'(\cdot)$. 
That theorem requires that $(p,T)$ be a ``tame point" of the flow.
The Bamler-Kleiner theory (\S\ref{bamler-kleiner-section}) asserts that for mean curvature
flows
associated to smooth,  
closed, embedded surfaces in $\RR^3$, all shrinkers that arise are smooth with multiplicity one.
In particular, all spacetime points are tame for such flows.)
\end{proof}

\begin{theorem}\label{pancake-shrinker-theorem}
Suppose that $M$ is a pancake and let $M(t)$, $t\in [0,T)$, be the associated mean
curvature flow.
Suppose that $(p,T)$ is a singular point of the flow, and that
\[
  t\mapsto |t|^{1/2}\Sigma
\]
is a tangent flow at $p$.  If $\Sigma$ has genus $0$,
it is one of the following: a sphere, a horizontal cylinder, or a vertical cylinder.
If $\Sigma$ has genus $>0$, then:
\begin{enumerate}[\upshape(1)]
\item\label{p-shrinker-graph}
 $\Sigma^+:=\Sigma\cap\{z>0\}$
is the graph of a smooth function $f: \pi(\Sigma^+)\to (0,\infty)$.
\item\label{p-shrinker-cone} As $t\to 0$, $|t|^{1/2}\Sigma$ 
converges to a cone $C=C(\Sigma)$. The convergence is smooth with multiplicity one
in $\{z\ne 0\}$. Furthermore, $\nu\cdot\ee_3>0$ at each point of  $C^+:= C\cap\{z>0\}$.
\item\label{p-shrinker-bound} There is a constant $\lambda<\infty$ such that
such that
\[
   f(x,y) \le \lambda(|(x,y)| + 1)
\]
for all $(x,y)$ in the domain of $f$.
\item\label{p-shrinker-cone-2}
Suppose that $\Sigma$ is noncompact and  that $\Sigma\cap\{z=0\}$ is compact,
and let $C$ be as in Assertion~\eqref{p-shrinker-cone}. 
   Then $C\cap\{z=0\} = \{0\}$, the convergence of $|t|^{1/2}\Sigma$ to $C$ is smooth
    except at the origin, and $C^+\setminus \{0\}$ is the graph of a smooth
    function on $\RR^2\setminus\{0\}$.
\end{enumerate}
\end{theorem}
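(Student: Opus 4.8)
The plan is to combine the instantaneous-strictness results for pancakes (Theorems~\ref{instant-pancake-theorem} and~\ref{graphical}) with the fact, guaranteed by the Bamler–Kleiner theory, that $\Sigma$ is smooth, embedded, and of multiplicity one, and then to pull back the pancake structure to the tangent flow by a blow-up argument. First I would treat the genus-$0$ case: a smooth, embedded self-shrinker in $\RR^3$ with $\nu\cdot\ee_3\ge 0$ on its upper half and genus $0$ must be a sphere, a cylinder, or a plane by the classification results for low-genus shrinkers; the plane cannot occur as a tangent flow to a compact flow with a genuine singularity, so we are left with the sphere, horizontal cylinder, or vertical cylinder (a horizontal plane cannot arise because $\Sigma\cap\{z=0\}$ would be empty, contradicting that $\Sigma$ is the blow-up limit at a point where the flow is singular and symmetric about $\{z=0\}$; alternatively one rules it out by multiplicity, since Bamler–Kleiner forces multiplicity one but a horizontal plane tangent flow at an interior point would have even multiplicity by the reflection symmetry). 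For the genus-$>0$ case, the key structural input is that $\Sigma$ inherits the pancake inequality: parabolically rescaling $M(\cdot)$ about $(p,T)$ and passing to the limit, the inequality $\nu\cdot\ee_3\ge 0$ on $M(t)^+$ (Theorem~\ref{instant-pancake-theorem}) passes to the limit to give $\nu\cdot\ee_3\ge 0$ on $\Sigma^+$, and then Lemma~\ref{local-pancake-lemma} applied to the shrinker flow $t\mapsto |t|^{1/2}\Sigma$ upgrades this to $\nu\cdot\ee_3>0$ on $\Sigma^+$ unless $\nu\cdot\ee_3\equiv 0$ on an open set — but the latter would force a piece of $\Sigma$ to be a vertical cylinder or plane, incompatible with $\Sigma$ being connected of positive genus. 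Granting strict positivity, the argument of Theorem~\ref{graphical} (applied to $\Sigma$, whose bounded complementary region plays the role of $K$) shows $\pi|\Sigma^+$ is injective, hence a diffeomorphism onto its image, giving~\eqref{p-shrinker-graph}.

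For~\eqref{p-shrinker-cone}, I would use the standard fact that a self-shrinker flow $|t|^{1/2}\Sigma$ converges as $t\to 0^-$ to the cone $C=\{r\ge 0: rx\in\Sigma\text{ for all large }r\} $, i.e.\ the cone over the links at infinity of $\Sigma$; the convergence is smooth and multiplicity one away from the spine $\{z=0\}$ because there $\Sigma$ is, by~\eqref{p-shrinker-graph} and the shrinker equation, a graph with controlled geometry (one can invoke Brakke/White regularity together with the graphical structure, or cite the corresponding statement in the shrinker literature). The inequality $\nu\cdot\ee_3\ge 0$ on $C^+$ is inherited from $\Sigma^+$ under the rescaling $\Sigma\rightsquigarrow s^{-1}\Sigma$, $s\to\infty$, and strictness on $C^+$ follows again from Lemma~\ref{local-pancake-lemma} applied to the static cone flow (a cone is a static solution) together with connectedness. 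Assertion~\eqref{p-shrinker-bound} is then the statement that $C^+$, being a graph over a plane region with $\nu\cdot\ee_3>0$, is a Lipschitz graph (uniform bound from the cone structure: $f$ on the cone is exactly homogeneous degree one, and $\Sigma$ is a bounded perturbation of the cone in the $z$-direction away from the axis, so $f_\Sigma(x,y)\le f_C(x,y)+O(1)\le \lambda(|(x,y)|+1)$), and one reads off $\lambda$ from the slope of $C^+$.

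Finally~\eqref{p-shrinker-cone-2}: if $\Sigma$ is noncompact with $\Sigma\cap\{z=0\}$ compact, then the ends of $\Sigma$ lie entirely in $\{z\ne 0\}$, so the link at infinity of $\Sigma$ misses the equator $\{z=0\}\cap S^2$ except possibly at the poles; combined with the asymptotically-conical structure and the fact that $\Sigma^+$ is a graph, the only way the cone $C$ can meet $\{z=0\}$ is at the origin, giving $C\cap\{z=0\}=\{0\}$, that the convergence is smooth off the origin, and that $C\setminus\{0\}$ is a smooth graph over $\RR^2\setminus\{0\}$ (it is the cone over two circles, one in each hemisphere, each a graph). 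The main obstacle I expect is making rigorous the claim that the pancake inequality and the graphical/cone structure pass to the blow-up limit with no loss — in particular ruling out higher multiplicity and ruling out that $\nu\cdot\ee_3\equiv 0$ on an open piece — and for this the essential tools are the Bamler–Kleiner smoothness-and-multiplicity-one theorem (Theorem~\ref{bk-theorem}), the strong maximum principle packaged in Lemma~\ref{local-pancake-lemma}, and the low-genus shrinker classification; the cone-convergence statements~\eqref{p-shrinker-cone} are by now standard for smooth shrinkers and I would cite rather than reprove them.
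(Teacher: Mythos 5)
Your treatment of the genus-$0$ case and of Assertion~(1) matches the paper's: Bamler--Kleiner gives smoothness and multiplicity one, Brendle classifies genus-$0$ shrinkers, the plane is excluded because a multiplicity-one plane tangent flow means a regular point, and the strict inequality $\nu\cdot\ee_3>0$ on $\Sigma^+$ follows from the strong maximum principle once one rules out $\nu\cdot\ee_3\equiv 0$ (the paper makes this precise via unique continuation: vanishing on an open set forces $\Sigma=S\times\RR$, which has genus $0$). Two remarks there: your first reason for excluding a horizontal plane (``$\Sigma\cap\{z=0\}$ would be empty'') is not right, though your fallback via multiplicity one is; and to see that a cylinder must be horizontal or vertical you need to first invoke Theorem~\ref{pancake-regularity-theorem} to place $p$ in $\{z=0\}$, so that $\Sigma$ inherits the reflection symmetry.

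The genuine gaps are in Assertions~(3) and~(4). For~(3), you assert that $\Sigma^+$ is ``a bounded perturbation of the cone in the $z$-direction,'' i.e.\ $f_\Sigma\le f_C+O(1)$. Nothing established up to that point gives this: the blow-down convergence $s^{-1}\Sigma\to C$ is only locally smooth in $\{z\ne 0\}$ and does not yield a uniform vertical distance bound (asymptotically conical structure with such control is only obtained, under an extra hypothesis, in part~(4)). Moreover, the homogeneous-degree-one bound on $f_C$ itself requires knowing $\ee_3\notin C$. The paper's argument goes the other way: from $\nu\cdot\ee_3>0$ on $C^+$ one gets $C\cap Z^+=\emptyset$, and then if the linear bound on $f$ failed there would be $p_i\in\Sigma^+$ with $p_i/|p_i|\to\ee_3$, forcing $\ee_3\in C$ --- a contradiction. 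For~(4), your inference ``the ends of $\Sigma$ lie in $\{z\ne 0\}$, so the link at infinity misses the equator'' is a non sequitur: a surface can lie entirely in $\{z>0\}$ while its blow-down cone contains all of $\{z=0\}$ (indeed, as $g\to\infty$ the cones $C(\Sigma_g)$ converge to exactly that). The correct argument, as in the paper, is an avoidance/strong maximum principle argument against the static minimal plane $\{z=0\}$: since $|t|^{1/2}(\Sigma\cap\{z=0\})\to\{0\}$, the surfaces $|t|^{1/2}\Sigma^{\pm}$ stay in the open half-spaces near any $p\ne 0$ in $\{z=0\}$, so if $C$ contained such a $p$ the flow would touch the plane at time $0$ without coinciding with it, which is impossible. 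Your justification of the smooth multiplicity-one convergence in $\{z\ne 0\}$ in~(2) is also vaguer than it should be; the paper derives it from the reflection-plus-Hopf-boundary-point argument of Theorem~\ref{pancake-regularity-theorem} applied to the self-similar flow.
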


\begin{proof}
By the Bamler-Kleiner theory (\S\ref{bamler-kleiner-section}), $\Sigma$ is smooth with
multiplicity one.

First, suppose that $\Sigma$ has genus $0$.
By~\cite{brendle}, it is a plane, a sphere, or a cylinder.
If $\Sigma$ were a plane, it would be a multiplicity $1$ plane, and hence $(p,T)$ would be a regular point,
contrary to the hypothesis.  
Thus $\Sigma$ is a sphere or cylinder.
By Theorem~\ref{pancake-regularity-theorem}, $p$ is contained in the plane $\{z=0\}$.  
Consequently,  $\Sigma$ is invariant under $(x,y,z)\mapsto (x,y,-z)$.
Thus, if $\Sigma$ is a cylinder, then the cylinder is either horizontal or vertical.

Now suppose that $\genus(\Sigma)>0$.
Consider the mean curvature flow
\begin{equation}\label{sigma-plus-flow}
  t\in (-\infty,0)\mapsto |t|^{1/2}\Sigma \tag{*}
\end{equation}
Note that $\nu\cdot \ee_3\ge 0$ at all points of $\Sigma^+$.
If $\nu\cdot\ee_3$ were equal to $0$ at some point $p$ of $\Sigma^+$, then (by the strong maximum principle)
it would be $0$ on some neighborhood  of $\Sigma$ (see Lemma~\ref{local-pancake-lemma}).
By unique continuation, it would be $0$ everywhere on $\Sigma$,
  and thus $\Sigma$ would have the form $S\times \RR$
for an embedded shrinker curve $S\subset\RR^2$.  But  then $\Sigma$ would have
genus $0$, contrary to our assumption.   Thus 
  Assertion~\eqref{p-shrinker-graph} holds.

Convergence of $|t|^{1/2}\Sigma$ to a cone $C$ (as $t\to 0$) holds for any shrinker.
Exactly the same argument used to prove Theorem~\ref{pancake-regularity-theorem}
 shows that the flow~\eqref{sigma-plus-flow} has no time $0$ singularities 
 in $\{z>0\}$ (or, by symmetry, in $\{z<0\}$).
  Hence $C^+$ is smooth with multiplicity $1$.  Applying the maximum
 principle (Lemma~\ref{local-pancake-lemma}, with $U=\{z>0\}$) to the flow
 \begin{align*}
 t\in (-\infty,0] 
 \mapsto
 \begin{cases}
 |t|^{1/2}\Sigma   &\text{if $t<0$, and} \\
 C                        &\text{if $t=0$},
 \end{cases}
 \end{align*}
 we see that $\nu\cdot \ee_3>0$ at all points of $C^+$.  
 Thus Assertion~\eqref{p-shrinker-cone} holds.

If the cone $C$ contained any point of $Z^+:=\{(0,0,z): z>0\}$,
 then it would contain all the points of $Z^+$.
But then $\nu\cdot \ee_3$ would be $0$ at those points, contrary to Assertion~\eqref{p-shrinker-cone}.
Thus
\begin{equation}\label{no-Z}
   C\cap Z^+ =\emptyset.
\end{equation}
If the bound on $f$ in Assertion~\eqref{p-shrinker-bound} failed,
 there would be points $p_i = (x_i,y_i,f(x_i,y_i))$ such
that
\[
    \frac{f(x_i,y_i)}{1 + |(x_i,y_i)|} \to \infty.
\]
Thus 
\[
    |p_i| =  |(x_i,y_i,f(x_i,y_i))| \to \infty
\]
and
\[
   \frac{p_i}{|p_i|} \to \ee_3.
\]
Hence, $\ee_3$ would be in $\Sigma^+(0)$ (since $p_i/|p_i|$ is in $(1/|p_i|)\Sigma$).
But that contradicts~\eqref{no-Z}.  Thus Assertion~\eqref{p-shrinker-bound} holds.

Now suppose that $\Sigma$ is noncompact and that $\Gamma:=\Sigma\cap\{z=0\}$ is compact.
If $\Sigma$ had an asymptotically cylindrical end, then (by symmetry) $\Sigma^+$ would have such an end.
But that is impossible by Assertion~\eqref{p-shrinker-bound}.  Thus $\Sigma$ has no asymptotically cylindrical ends.
It follows, by~\cite{bamler-kleiner}*{Theorem~1.10}, that $C\setminus\{0\}$ is smoothly embedded and that the convergence of $|t|^{1/2}\Sigma$ to $C$
is smooth and with multiplicity one away from the origin.  If $C\cap\{z=0\}$ contained a point $p$, then $\nu(C,p)\cdot\ee_3=0$
by $(x,y,z)\mapsto (x,y,-z)$ symmetry of $\Sigma$.   But that is impossible since (for each $\eps>0$), $(|t|^{1/2}C)\setminus B(0,\eps)$
is disjoint from $\{z=0\}$ for all $t<0$ sufficiently close to $0$.  
Hence
\begin{equation}\label{cone-point}
C\cap\{z=0\}  = \{0\}.
\end{equation}
Finally, $C^+\setminus\{0\}$ is the graph of a smooth function over $\RR^2\setminus\{0\}$ 
  by Assertions~\eqref{p-shrinker-graph} and~\eqref{p-shrinker-cone} together with~\eqref{cone-point}.
\end{proof}

\begin{corollary}\label{pancake-shrinker-genus-corollary}
If the shrinker $\Sigma$ in Theorem~\ref{pancake-shrinker-theorem} is noncompact
and if $\Sigma\cap\{z=0\}$ is compact, then the number
of components of $\Sigma\cap\{z=0\}$ is $\genus(\Sigma)+1$.
\end{corollary}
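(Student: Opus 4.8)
The plan is to imitate the proof of Theorem~\ref{pancake-genus-theorem}, with the compact pieces $M^{\pm}$ there replaced by the (noncompact) graphical pieces of $\Sigma$, keeping careful track of ends. Set $\Sigma^{+}:=\Sigma\cap\{z>0\}$, $\Sigma^{-}:=\Sigma\cap\{z<0\}$, $\Gamma:=\Sigma\cap\{z=0\}$, and let $a$ be the number of components of $\Gamma$. Recall from the proof of Theorem~\ref{pancake-shrinker-theorem} (via Theorem~\ref{pancake-regularity-theorem}) that the singular point lies in $\{z=0\}$, so $\Sigma$ is invariant under the reflection $(x,y,z)\mapsto(x,y,-z)$; hence $\Sigma$ meets $\{z=0\}$ orthogonally, $\Gamma$ is a disjoint union of $a$ smooth simple closed curves, and $\pi(\Sigma^{-})=\pi(\Sigma^{+})=:\Omega$. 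By Theorem~\ref{pancake-shrinker-theorem}\eqref{p-shrinker-graph}, $\pi$ maps $\Sigma^{+}$ diffeomorphically onto the open set $\Omega\subset\RR^{2}$; in particular $\Sigma^{+}$ has genus $0$. I would first dispose of the case $\genus(\Sigma)=0$: by Theorem~\ref{pancake-shrinker-theorem} together with the noncompactness of $\Sigma$ and the compactness of $\Gamma$, the only remaining possibility is that $\Sigma$ is a vertical cylinder, for which $a=1=\genus(\Sigma)+1$. So from now on assume $\genus(\Sigma)\ge 1$; in particular $\Sigma$ is not a plane.

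The crux is to determine the Euler characteristic of $\Sigma^{+}$. First, $\Sigma^{+}$ (equivalently $\Omega$) is connected, since otherwise $\Sigma=\Sigma^{+}\cup\Gamma\cup\Sigma^{-}$ would be disconnected. Second, $\overline{\Sigma^{+}}=\Sigma^{+}\cup\Gamma$ is a surface with boundary $\Gamma$: because $\Sigma$ meets $\{z=0\}$ orthogonally along $\Gamma$, and because, by the growth bound \eqref{p-shrinker-bound}, a sequence in $\Sigma^{+}$ whose projection converges in $\RR^{2}$ stays bounded and hence subconverges in $\overline{\Sigma^{+}}$, the only limit points of $\Sigma^{+}$ in $\{z=0\}$ lie on $\Gamma$. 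Third, $\Sigma^{+}$ has exactly one end, which I would read off from \eqref{p-shrinker-cone-2}: since $|t|^{1/2}\Sigma\to C$ smoothly away from the origin and $C^{+}$ is the graph of a positive $1$-homogeneous function on $\RR^{2}\setminus\{0\}$, a rescaling/exhaustion argument shows that outside a sufficiently large ball $\Sigma^{+}$ is itself a graph over the complement of a disk; thus $\RR^{2}\setminus\Omega$ is compact and $\Sigma^{+}$ has a single, annular end. (Also $a\ge 1$, since $a=0$ would give $\Sigma\subset\{z>0\}\cup\{z<0\}$, contradicting connectedness.) Compactifying the one end of $\Sigma^{+}$ then produces a compact, connected, genus-$0$ surface whose boundary is the $a$ circles of $\Gamma$; such a surface has Euler characteristic $2-a$, and $\Sigma^{+}$ is homotopy equivalent to it with an interior point deleted, so
\[
  \chi(\Sigma^{+})=\chi(\Sigma^{-})=(2-a)-1=1-a.
\]

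It remains to assemble the count exactly as in Theorem~\ref{pancake-genus-theorem} and Remark~\ref{genus-remark}. Removing the $a$ simple closed curves of $\Gamma$ from $\Sigma$ does not change the Euler characteristic, so
\[
  \chi(\Sigma)=\chi(\Sigma\setminus\Gamma)=\chi(\Sigma^{+})+\chi(\Sigma^{-})=2-2a.
\]
The shrinker $\Sigma$ is connected, and it has exactly two ends: since $\Gamma$ is compact, the ends of $\Sigma$ are those of $\Sigma^{+}$ and of $\Sigma^{-}$ away from $\Gamma$, namely the single annular end of each. Hence, by the genus--Euler-characteristic--ends formula in Remark~\ref{genus-remark},
\[
  \genus(\Sigma)=c(\Sigma)-\tfrac12\chi(\Sigma)-\tfrac12 e(\Sigma)=1-(1-a)-1=a-1,
\]
so $a=\genus(\Sigma)+1$, which is the assertion.

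I expect the only real obstacle to be the middle paragraph---more precisely, verifying that $\partial\Omega$ is exactly $\Gamma$ and that $\Sigma^{+}$ has a single, clean (annular) end at infinity. This is the one place where the argument must genuinely combine the pointwise graphicality of $\Sigma^{+}$ in \eqref{p-shrinker-graph}, the linear growth bound \eqref{p-shrinker-bound}, and the structure of the asymptotic cone in \eqref{p-shrinker-cone-2}. Once the homeomorphism type of $\Sigma^{+}$ is pinned down, the remainder is the same Euler-characteristic bookkeeping already used for pancakes.
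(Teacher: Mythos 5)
Your proposal is correct and takes essentially the same route as the paper: the paper likewise disposes of the genus-$0$ case via the vertical cylinder, observes that $\Sigma^+$ is a planar domain (the plane with $a$ disks removed) so that $\chi(\Sigma^+)=1-a$, doubles to get $\chi(\Sigma)=2-2a$, and concludes $\genus(\Sigma)=a-1$ from connectedness and the two ends. You simply supply more detail (via the growth bound and the asymptotic cone) for the step the paper asserts outright, namely that $\Sigma^+$ has a single end and boundary exactly $\Gamma$.
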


\begin{proof}
If $\Sigma$ has genus $0$, then $\Sigma$ is a vertical cylinder,
so the assertion holds in that case.  

Now suppose that $\genus(\Sigma)>0$ and that $\Sigma\cap\{z=0\}$
has $a$ components.  Each component is a simple closed curve.
Note that $\Sigma^+$ is homeomorphic to the plane with $a$ disks removed,
and hence $\chi(\Sigma^+)=1-a$.  Thus (since removing simple closed curves
does not change the Euler characteristic)
\begin{align*}
\chi(\Sigma)
&=
\chi(\Sigma \setminus\{z=0\})  \\
&= \chi(\Sigma^+) + \chi(\Sigma^-) \\
&=2\chi(\Sigma^+) \\
&= 2-2a.
\end{align*}
Since $\Sigma$ is connected and has $2$ ends, it follows that $\Sigma$
has genus $a-1$.
\end{proof}

\begin{corollary}\label{distance-corollary}
If $M$ is a pancake and if $(p,T)$ is a singular point of the associated mean curvature
flow $M(t)$, $t\in [0,T)$, then
\[
    \limsup_{t\to T} \frac{\dist(p, M(t)\cap\{z=0\})}{|T-t|^{1/2}} < \infty,
\]
and therefore
\[
   \lim_{t\to T} \dist(p, M(t)\cap \{z=0\}) = 0.
\]
\end{corollary}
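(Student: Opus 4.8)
The plan is to argue by contradiction, blowing up at the singular point and using the classification of pancake shrinkers in Theorem~\ref{pancake-shrinker-theorem} together with the Bamler--Kleiner regularity. First, since $(p,T)$ is singular, Theorem~\ref{pancake-regularity-theorem} forces $z(p)=0$. Suppose, contrary to the first assertion, that there are times $t_i\to T$ with
\[
   d_i:=\frac{\dist\big(p,\ M(t_i)\cap\{z=0\}\big)}{|T-t_i|^{1/2}}\ \longrightarrow\ \infty .
\]
Put $\lambda_i=|T-t_i|^{-1/2}$ and consider the parabolically rescaled flows $M_i(s):=\lambda_i\big(M(T+\lambda_i^{-2}s)-p\big)$, so that $M_i(-1)=\lambda_i\big(M(t_i)-p\big)$. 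After passing to a subsequence, $M_i(\cdot)$ converges to a tangent flow $s\mapsto|s|^{1/2}\Sigma$ at $(p,T)$. By the Bamler--Kleiner theory (\S\ref{bamler-kleiner-section}), $\Sigma$ is smooth with multiplicity one, so by the local regularity theorem $M_i(-1)\to\Sigma$ smoothly on compact subsets of $\RR^3$. Because $z(p)=0$, every rescaling preserves the plane $\{z=0\}$, so $M_i(-1)\cap\{z=0\}=\lambda_i\big((M(t_i)\cap\{z=0\})-p\big)$ and hence $\dist\big(0,\ M_i(-1)\cap\{z=0\}\big)=d_i\to\infty$.

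The crux is to contradict this using the geometry of $\Sigma$. I claim $\Sigma\cap\{z=0\}\neq\emptyset$ and that $\Sigma$ meets $\{z=0\}$ transversally along that set. Nonemptiness is immediate: $\Sigma$ is connected and invariant under $(x,y,z)\mapsto(x,y,-z)$, so were it disjoint from $\{z=0\}$ it would be disconnected. For transversality, fix $q\in\Sigma\cap\{z=0\}$ and suppose $T_q\Sigma$ is not vertical. Then near $q$ the surface $\Sigma$ is a single-valued graph $z=g(x,y)$ over a neighborhood of $\pi(q)$ with $g(\pi(q))=0$; the reflection symmetry forces the reflected graph to coincide with this one, i.e.\ $g\equiv -g$, hence $g\equiv 0$, so $\Sigma$ contains an open piece of $\{z=0\}$ and, by unique continuation, equals the plane $\{z=0\}$. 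That is impossible, since $\Sigma$ is the shrinker of a genuine singularity and hence (by Theorem~\ref{pancake-shrinker-theorem}) is a sphere, a horizontal or vertical cylinder, or a surface of positive genus. Therefore $\ee_3\in T_q\Sigma$, so $\Sigma$ is transversal to $\{z=0\}$ at $q$ and, near $q$, $\Sigma\cap\{z=0\}$ is a smooth arc through $q$.

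To finish: transversality is stable under smooth convergence, so for $i$ large the sets $M_i(-1)\cap\{z=0\}$ are smooth arcs near $q$ converging to $\Sigma\cap\{z=0\}$ near $q$; in particular $M_i(-1)\cap\{z=0\}$ meets the unit ball about $q$ once $i$ is large. Hence $\dist\big(0,\ M_i(-1)\cap\{z=0\}\big)\le|q|+1$ for all large $i$, contradicting $d_i\to\infty$. This establishes the $\limsup$ bound, and the second assertion follows at once because $|T-t|^{1/2}\to 0$ as $t\to T$.

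The step I expect to require the most care is the transversality claim — in particular ruling out that $\Sigma$ could be tangent to $\{z=0\}$ somewhere along $\Sigma\cap\{z=0\}$ — together with pinning down the smooth, multiplicity-one convergence $M_i(-1)\to\Sigma$ from the Bamler--Kleiner regularity and the local regularity theorem; the rest of the blow-up bookkeeping is routine.
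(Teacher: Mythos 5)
Your argument is correct and is essentially the paper's proof: blow up at $(p,T)$ along the times in question, use Bamler--Kleiner to get smooth multiplicity-one convergence to a shrinker $\Sigma$, and conclude from the fact that $\Sigma$ meets $\{z=0\}$ in a nonempty set transversally (the paper phrases this directly, identifying the $\limsup$ as $\dist(0,\Sigma\cap\{z=0\})$, rather than by contradiction). Your extra paragraphs justifying nonemptiness and orthogonality of $\Sigma\cap\{z=0\}$ fill in details the paper takes for granted; note only that the connectedness of $\Sigma$ you invoke rests on the Frankel property for embedded shrinkers, which is standard but worth citing.
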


\begin{proof}
Let $\alpha$ be the $\limsup$ in the assertion.  Choose $t_i\to T$ such that
\[
   \frac{\dist(p, M(t_i)\cap\{z=0\})}{|T-t_i|^{1/2}} \to \alpha.
\]
Then, after passing to a subsequence, 
\[
   |T-t_i|^{-1/2} (M(t_i) - p)
\]
converges smoothly and with multiplicity $1$ to a shrinker $\Sigma$ as described in 
 Theorem~\ref{pancake-regularity-theorem}.
Since $\Sigma$ intersects $\{z=0\}$ orthogonally in a nonempty curve or collection
of curves, it follows that
\[  
   \alpha = \dist(0,\Sigma\cap\{z=0\}) < \infty.
\]
\end{proof}

The following corollary is not needed in this paper.
We include it for completeness.

\begin{corollary}\label{unique-corollary}
Let $\Sigma$ be as in Theorem~\ref{pancake-shrinker-theorem}.
If $\Sigma$ has genus $0$, or if $\genus(\Sigma)>0$ and $\Sigma\cap\{z=0\}$
is compact, then 
\[
 t\in (-\infty,0)\mapsto |t|^{1/2}\Sigma
\]
is the unique tangent flow to $M(\cdot)$ at the spacetime point $(p,T)$.
\end{corollary}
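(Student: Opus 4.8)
The plan is to split into three cases according to the type of $\Sigma$ and to quote, in each case, the appropriate uniqueness theorem from the literature. Throughout, recall that $(p,T)$ is a singular point and $t\in(-\infty,0)\mapsto|t|^{1/2}\Sigma$ is a tangent flow at the spacetime point $(p,T)$; by the Bamler--Kleiner theory (\S\ref{bamler-kleiner-section}), $\Sigma$ is smooth and has multiplicity one, and in particular $\Sigma$ is not a plane.

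First, if $\Sigma$ is compact, then $t\mapsto|t|^{1/2}\Sigma$ is a smooth, compact, multiplicity-one shrinking flow, and uniqueness of the tangent flow at $(p,T)$ is exactly Schulze's theorem on uniqueness of compact tangent flows in mean curvature flow. This disposes of the case $\genus(\Sigma)=0$ with $\Sigma$ a round sphere, and also of the case $\genus(\Sigma)>0$ with $\Sigma$ compact. Second, if $\genus(\Sigma)=0$ and $\Sigma$ is noncompact, then by Theorem~\ref{pancake-shrinker-theorem} (which in turn rests on Brendle's classification of genus-zero shrinkers in $\RR^3$), $\Sigma$ is a round cylinder, horizontal or vertical, and uniqueness of cylindrical tangent flows is the theorem of Colding and Minicozzi. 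Together with the first case, this covers the hypothesis ``$\genus(\Sigma)=0$''.

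It remains to treat the main case: $\genus(\Sigma)>0$, $\Sigma$ noncompact, and $\Gamma:=\Sigma\cap\{z=0\}$ compact. Here the strategy is to verify that $\Sigma$ is a smooth, multiplicity-one, \emph{asymptotically conical} self-shrinker and then invoke the uniqueness theorem of Chodosh and Schulze for asymptotically conical tangent flows. By Theorem~\ref{pancake-shrinker-theorem}\eqref{p-shrinker-cone}--\eqref{p-shrinker-cone-2}, the rescalings $|t|^{1/2}\Sigma$ converge as $t\to 0^-$ to a cone $C$, the convergence being smooth and of multiplicity one away from the origin, with $C\cap\{z=0\}=\{0\}$ and $C\setminus\{0\}$ graphical over $\RR^2\setminus\{0\}$. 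Consequently the link $C\cap S^2$ is a smooth embedded closed $1$-manifold in $S^2$, i.e. $C$ is a regular cone; writing $\lambda=|t|^{1/2}$, we have $\lambda\Sigma\to C$ in $C^\infty_{\mathrm{loc}}(\RR^3\setminus\{0\})$ as $\lambda\to 0^+$, which is precisely the statement that $\Sigma$ is asymptotic to the regular cone $C$. Since $\Sigma$ is also smooth and has multiplicity one (\S\ref{bamler-kleiner-section}), the Chodosh--Schulze theorem applies and yields uniqueness of the tangent flow at $(p,T)$. Together with the first case (for $\Sigma$ compact), this covers the hypothesis ``$\genus(\Sigma)>0$ and $\Sigma\cap\{z=0\}$ compact''.

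The only point that is not purely formal is the verification, in the third case, that the blow-down cone $C$ is genuinely \emph{regular} (no singular points on its link) and that $\Sigma$ approaches it in the strong $C^\infty_{\mathrm{loc}}(\RR^3\setminus\{0\})$ sense required by the asymptotically-conical uniqueness theorem; but both facts are already contained in Theorem~\ref{pancake-shrinker-theorem}\eqref{p-shrinker-cone}--\eqref{p-shrinker-cone-2}, so the remaining content of the argument is bookkeeping and citation.
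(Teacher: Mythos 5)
Your proposal is correct and follows essentially the same route as the paper: a case split with Colding--Minicozzi for cylinders, Schulze for compact shrinkers, and Chodosh--Schulze for the asymptotically conical case, with the conical asymptotics supplied by Theorem~\ref{pancake-shrinker-theorem}\eqref{p-shrinker-cone-2}. The only (harmless) difference is that the paper cites Huisken for the sphere subcase, whereas you fold it into Schulze's compact uniqueness theorem.
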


\begin{proof}
If $\Sigma$ is a sphere, uniqueness follows from Huisken's Theorem~\cite{huisken}.
If $\Sigma$ is a cylinder, uniqueness was proved by Colding and Minicozzi~\cite{colding-minicozzi}.
Now suppose that $\genus(\Sigma)>0$. 
If $\Sigma$ is compact, uniqueness was proved by~Schulze~\cite{schulze}.
If $\Sigma$ is noncompact and $\Sigma\cap\{z=0\}$ is compact,
then $\Sigma$ is smoothly asymptotic at $\infty$ to a cone $C$
 (by Assertion~\eqref{p-shrinker-cone-2} of Theorem~\ref{pancake-shrinker-theorem}),
and thus uniqueness holds according to a Theorem of~Chodosh and~Schulze~\cite{chodosh-schulze}.
\end{proof}

Let $\Sigma$ be a smooth surface in $\RR^3$.  Recall that $\Sigma$ is a shrinker if and only if
it is a minimal surface with respect to the shrinker metric, i.e., the metric
\begin{equation}\label{shrinker-metric}
   (4\pi)^{-1}\exp(-|p|^2/4)\, \delta_{ij}.
\end{equation}

\begin{theorem}\label{stable-theorem}
Suppose that $\Sigma$ is an embedded shrinker in $\RR^3$ and that $\Sigma$ is
not the plane $\{z=0\}$.  Let $\Omega$ be
one of the components of $\{z=0\}\setminus \Sigma$.  
Then $\Omega$ is a stable minimal surface with respect to the shrinker metric.
(That is, the second variation of shrinker area is nonnegative for all variations given by compactly supported
variations.)
\end{theorem}

\begin{proof}
Suppose, to the contrary, that $\Omega$ is unstable as a minimal surface with respect to the shrinker metric.
Then there exists an unstable subregion $\Omega'$ of $\Omega$ such that $\overline{\Omega'}$ is a compact
region in $\Omega$ with smooth boundary. Let $W$ be the connected component of $\RR^3\setminus \Sigma$
that contains $\Omega'$.  
Let $M$ be a surface  that minimizes area among all surfaces (i.e., integer-multiplicity currents)
in $\overline{W}$ with boundary $\partial \Omega'$.
(Such a minimizer exists by Lemma~\ref{standard-lemma} below.)
By the strong maximum principle, the support of $M$ does not touch $\Sigma$. 
(See the theorem on page~686 and Remark~(2) on page~691 of \cite{solomon-white}.)
That is, $M$ lies in $W$.
By~\cite{hardt-simon}, $M$ is a smoothly embedded manifold-with-boundary.
Since $\Omega'$ is unstable, $M\ne \Omega'$. Thus $M$ does not lie in $\{z=0\}$, so
 $M^+:=M\cap\{z>0\}$ or $M^-:=M\cap\{z<0\}$ is nonempty.
We may suppose that $M^+$ is nonempty.  Now $M^+$ is stable.  
Thus, by~\cite{brendle}*{Proposition~5}, it is a halfplane bounded by a line through the origin.
Now $\Sigma$ is disjoint from $M$ and therefore from $\overline{M^+}$.
But no shrinker is disjoint from such a closed halfplane by~\cite{white-mcf-boundary}*{Theorem~15.1}.
\end{proof}.

\begin{lemma}\label{standard-lemma}
Let $K$ be a closed subset of $\RR^3$.
Let $A$ be a $2$-dimensional integer-multiplicity current supported in $K$
such that the shrinker area of $A$ is finite.  Then there exists 
a current $M$ that minimizers shrinker-area among all $2$-dimensional integer-multiplicity currents in $K$ with boundary $\partial A$.
\end{lemma}

The proof is standard, but we include it for the reader's convenience.

\begin{proof}
Let $\alpha$ be the infimum of the shrinker area of such currents.  Let $M_i$ be a sequence of such currents whose
shrinker areas tend to $\alpha$.
By the Federer-Fleming Compactness Theorem~\cite{simon-gmt}*{Theorem~27.3},
   the $M_i-A$ converge weakly (after passing to a subsequence)
 to a limit.  Thus the $M_i$ converge weakly to a limit $M$.

Consider the set $\Omega$ of smooth, compactly supported $2$-forms $\omega$ on $\RR^3$ such that
\[
   |\omega(p)| \le (4\pi)^{-1}\exp(-|p|^2/4)
\]
at each point $p$.     For $\omega\in \Omega$, we have
\[
   (M,\omega) = \lim (M_i, \omega).
\]
For each $i$, $(M_i,\omega)\le \area_s(M_i)$, where $\area_s$ is area (i.e., mass) with respect
to the shrinker metric.  Thus
\[
  (M, \omega) \le \liminf \area_s(M_i)) = \alpha.
\]
Consequently, 
\[
\alpha \le  \area_s(M) = \sup_{\omega\in \Omega}(M,\omega) \le \alpha.
\]
Hence $M$ attains the minimum $\alpha$.
\end{proof}

\section{$g$-Surfaces}\label{g-surface-section}

Recall that $\Rr_\theta:\RR^3\to \RR^3$ is rotation by $\theta$ around the $z$-axis,
that $\Theta:\RR^3\to \RR^3$ is the vectorfield
\begin{equation}\label{Theta-recalled}
  \Theta(x,y,z) = \left(\frac{d}{d\theta}\right)_{\theta=0} \Rr_\theta(x,y,z) =  (-y,x,0),
\end{equation}
and that $W(\alpha,\beta)$ is the open wedge in $\RR^3$ given by
\[
 W(\alpha,\beta) = \{ (r\cos\theta, r\sin\theta, z): r>0, \, \alpha< \theta< \beta\}.
\]

\begin{definition}\label{g-surface-definition}
Let $g\ge 2$ be an integer.
Let $M$ be a compact, connected, $C^{1,1}$ embedded surface in $\RR^3$ and let
$\nu$ be the unit normal to $M$ that points out of the compact region bounded by $M$.
We say that $M$ is a {\bf $g$-surface} provided
\begin{enumerate}
\item $M$ is invariant under  reflections in the planes given in cylindrical coordinates
by $\theta = k \pi/g$ for $k=0,1,\dots, g-1$.
\item $\nu(p)\cdot \Theta(p)\le 0$ for all~$p\in M\cap W(0,\pi/g)$,
  with strict inequality for some $p\in M\cap W(0,\pi/g)$.
\end{enumerate}
The $g$-surface is called a {\bf strict $g$-surface}
it is smooth and if $\nu(p)\cdot\Theta(p)<0$ for all $p\in M\cap W(0,\pi/g)$.
\end{definition}

If $M$ is a $g$-surface, then the reflectional symmetries imply that
\[
   \nu(p)\cdot \Theta(p) 
   \begin{cases}
   \le 0   &\text{if $p\in M\cap W(k \pi/g, (k+1)\pi/g)$ with $k$ even,} \\
   \ge 0   &\text{if $p\in M\cap W(k \pi/g, (k+1)\pi/g)$ with $k$ odd}.
   \end{cases}
\]
The inequalities are strict if $M$ is a strict $g$-surface.

\begin{theorem}\label{theta-graph-theorem}
Suppose that $M$ is a strict $g$-surface.  Then
the set
\begin{equation}\label{portion}
  M \cap W(0,\pi/g)  \tag{*}
\end{equation}
is disjoint from its image under $\Rr_\theta$ for $0<\theta<2\pi$.
\end{theorem}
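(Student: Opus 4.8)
The plan is to reduce the statement to the case of small rotations and then run a ``last exit'' argument along rotational arcs. Write $A:=M\cap W(0,\pi/g)$, so the claim is that $A\cap\Rr_\theta A=\emptyset$ for all $\theta\in(0,2\pi)$. For $\theta\in[\pi/g,\,2\pi-\pi/g]$ this is immediate: $\Rr_\theta A\subset\Rr_\theta W(0,\pi/g)=W(\theta,\theta+\pi/g)$ (angular coordinates read mod $2\pi$), and the angular interval $(\theta,\theta+\pi/g)$ is disjoint from $(0,\pi/g)$, so $A$ and $\Rr_\theta A$ lie in disjoint open sets. For $\theta\in(2\pi-\pi/g,2\pi)$ I would note that $A\cap\Rr_\theta A\ne\emptyset$ is equivalent, after applying $\Rr_{-\theta}$, to $A\cap\Rr_{2\pi-\theta}A\ne\emptyset$ with $2\pi-\theta\in(0,\pi/g)$. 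So it suffices to treat $\theta\in(0,\pi/g)$.

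Fix such a $\theta$ and suppose, for contradiction, that there are $p\in A$ and $q\in A$ with $p=\Rr_\theta q$. In cylindrical coordinates $p$ and $q$ have the same $r=r_0>0$ and the same $z=z_0$, and their angles satisfy $\phi_q<\phi_p$ with $\phi_p-\phi_q=\theta$ and $\phi_q,\phi_p\in(0,\pi/g)$. Consider the arc $\gamma(s)=(r_0\cos s,\,r_0\sin s,\,z_0)$ for $s\in[\phi_q,\phi_p]$. Since $[\phi_q,\phi_p]\subset(0,\pi/g)$ and $r_0>0$, the whole arc lies in $W(0,\pi/g)$; moreover $\gamma'(s)=\Theta(\gamma(s))$, $\gamma(\phi_q)=q$, and $\gamma(\phi_p)=p$. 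Let $K$ be the compact region bounded by $M$, so that $\nu$ is the outward unit normal of $K$.

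Now I would use the defining inequality. Because $q\in M\cap W(0,\pi/g)$, we have $\nu(q)\cdot\Theta(q)<0$, i.e.\ $\gamma'(\phi_q)$ points strictly into $K$; the elementary fact that a $C^1$ curve leaving a point of $\partial K$ with velocity pointing strictly inward enters $\operatorname{int} K$ gives $\gamma(s)\in\operatorname{int} K$ for $s$ slightly larger than $\phi_q$. Applying the same fact at $p$ in the reversed direction --- here $-\Theta(p)=-\gamma'(\phi_p)$ points strictly out of $K$ --- gives $\gamma(s)\notin K$ for $s$ slightly smaller than $\phi_p$. Set
\[
  s_1:=\sup\{s\in[\phi_q,\phi_p]:\gamma(s)\in\operatorname{int} K\}.
\]
By the two observations just made, $\phi_q<s_1<\phi_p$, and $\gamma(s_1)$ is a limit of points of $\operatorname{int} K$ but does not itself lie in $\operatorname{int} K$; hence $\gamma(s_1)\in\partial K=M$, and in fact $\gamma(s_1)\in M\cap W(0,\pi/g)$. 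But then $\nu(\gamma(s_1))\cdot\Theta(\gamma(s_1))<0$, so by the same transversality fact $\gamma(s)\in\operatorname{int} K$ for $s$ slightly larger than $s_1$, contradicting the definition of $s_1$. This rules out the existence of $p$ and $q$, and the theorem follows.

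I expect the only step with genuine content, as opposed to bookkeeping, to be the transversality input used three times above: for the merely $C^{1,1}$ hypersurface $M=\partial K$ one must know that a strict sign of $v\cdot\nu$ at a point forces a $C^1$ curve with initial velocity $v$ to move immediately to the corresponding side of $M$. This is routine --- locally write $K=\{f\le 0\}$ for a $C^1$ function $f$ with $\nabla f$ a positive multiple of $\nu$, so that $\frac{d}{ds}f(\gamma(s))$ has the sign of $v\cdot\nu$ at the endpoint in question --- but it is where one must be careful about the orientation of $\nu$ (outward from $K$) and about ``which side'' one lands on. Everything else is the combinatorial bookkeeping of which wedges overlap.
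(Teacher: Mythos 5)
Your proof is correct and is essentially the paper's argument: both reduce the statement to showing that a horizontal circle about the $z$-axis meets $M\cap W(0,\pi/g)$ in at most one point, and both derive the contradiction from the strict sign of $\nu\cdot\Theta$ at points of the arc joining two putative intersection points. The paper extracts the contradiction by taking two \emph{successive} intersection points (so the open arc between them lies entirely in $K$ or in $K^c$, forcing opposite signs of $\nu\cdot\Theta$ at the endpoints), whereas you use a last-exit-time argument; this is only a difference in bookkeeping.
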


\begin{proof}
Note that each circle of the form
\[
   C = \{(x,y,z): x^2+y^2=r^2, \,  z= \zeta \}
\]
is transverse to~\eqref{portion}.  If the assertion were false, there would be such a circle
that intersects the set~\eqref{portion} in more that one point.  Let $p$ and $q$ be two successive points
in that set.  Let $K$ be the compact region whose boundary is $M$.
Then the arc of $C$ between $p$ and $q$ either lies in $K$ or in $K^c$.
Either way,  $\nu \cdot \Theta$ would be positive at one of the points in $\{p, q\}$
and negative at the other, 
contrary to the definition of strict $g$-surface.
\end{proof}

\begin{corollary}\label{theta-graph-corollary}
Suppose that $M$ is a strict $g$-surface and that $P$ is a plane containing
the $z$-axis.
Let $\Sigma = M\cap W(0,\pi/g)$ and let $\Sigma'$ be the image of $\Sigma$
under reflection in $P$.
Then
\[
   \Sigma\cap\Sigma' \subset P.
\]
\end{corollary}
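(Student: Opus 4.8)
The plan is to deduce this directly from Theorem~\ref{theta-graph-theorem} by tracking angular coordinates. I would argue by contradiction: suppose there is a point $q\in\Sigma\cap\Sigma'$ with $q\notin P$. Write $\rho$ for the reflection of $\RR^3$ in $P$. Since $q\in\Sigma'=\rho(\Sigma)$, there is a point $q'\in\Sigma$ with $q=\rho(q')$; and since $\rho$ fixes exactly the plane $P$ and $q\notin P$, we get $q'\notin P$ as well and $q'\ne q$. By the definition of $\Sigma$, both $q$ and $q'$ lie in $M\cap W(0,\pi/g)$.

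Next I would use that $P$ contains the $z$-axis, so $P=P(\phi)$ for some $\phi\in\RR$, and recall that $\rho$ acts in cylindrical coordinates by $(r,\theta,z)\mapsto(r,2\phi-\theta,z)$: it fixes the $z$-coordinate and reflects the horizontal component across the line at angle $\phi$. Hence $q$ and $q'$ have the same $r$- and $z$-coordinates, with $r>0$ since every point of $W(0,\pi/g)$ has positive radius, while their angular coordinates $\theta,\theta'$ both lie in $(0,\pi/g)$. Therefore $q=\Rr_{\theta-\theta'}(q')$, and $q\ne q'$ together with $r>0$ forces $\theta\ne\theta'$, so $\theta-\theta'\in(-\pi/g,\pi/g)\setminus\{0\}$.

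Finally I would note that $\theta-\theta'$ has a representative $\alpha$ modulo $2\pi$ lying in $(0,2\pi)$: if $\theta-\theta'>0$ take $\alpha=\theta-\theta'\in(0,\pi/g)$, and if $\theta-\theta'<0$ take $\alpha=\theta-\theta'+2\pi\in(2\pi-\pi/g,2\pi)$; either way $0<\alpha<2\pi$ (this is where the hypothesis $g\ge 2$, making $\pi/g\le\pi/2<2\pi$, is used). Then $q=\Rr_\alpha(q')$ exhibits $q$ as a point of $\Sigma\cap\Rr_\alpha(\Sigma)$, contradicting Theorem~\ref{theta-graph-theorem}. Hence no such $q$ exists, i.e. $\Sigma\cap\Sigma'\subset P$. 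I do not expect any real obstacle here; the only thing to be careful about is the bookkeeping in the last step, to make sure the transformation carrying $q'$ to $q$ is a genuine rotation $\Rr_\alpha$ with $\alpha$ in the range $(0,2\pi)$ to which Theorem~\ref{theta-graph-theorem} applies.
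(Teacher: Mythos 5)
Your proof is correct and follows essentially the same route as the paper: both arguments observe that reflection in a plane containing the $z$-axis preserves the horizontal circle $\{\Rr_\theta p\}$ through a point, and then invoke Theorem~\ref{theta-graph-theorem} to conclude that the point and its reflection must coincide. The paper phrases this directly rather than by contradiction, but the content, including the bookkeeping that the rotation angle lies in $(0,2\pi)$, is the same.
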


\begin{proof}
Suppose $p\in \Sigma\cap \Sigma'$.   Since $p\in \Sigma'$, the image $p'$ of $p$
under reflection in $P$ is contained in $\Sigma$.   Since $p$ and $p'$ both lie the circle
\[
   \{ \Rr_\theta p: \theta\in \RR\},
\]
we see that $p=p'$ by Theorem~\ref{theta-graph-theorem}.  Thus $p\in P$.
\end{proof}

\begin{theorem}\label{instant-g-surface-theorem}
Suppose that $M$ is a $g$-surface and let $M(t)$, $t\in [0,T)$
be the associated mean curvature flow.
Then $M(t)$ is a strict $g$-surface for all $t\in [0,T)$.
\end{theorem}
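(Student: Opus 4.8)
The plan is to imitate the argument for pancakes (Theorem~\ref{instant-pancake-theorem}), applying the strong maximum principle to the scalar quantity $v := \nu\cdot\Theta$ on the ``fundamental wedge'' portion $M(t)\cap \overline{W(0,\pi/g)}$. First I would take the standard parametrization $\xx:M\times[0,T)\to\RR^3$ of the associated flow. Since mean curvature flow preserves the reflectional symmetries in the planes $\theta = k\pi/g$, the map $\xx(\cdot,t)$ carries $\overline{M^0}$ diffeomorphically onto $\overline{M(t)^0}$, where $M^0 := M\cap W(0,\pi/g)$ and similarly for $M(t)$; in particular the ``boundary'' $M(t)\cap(P(0)\cup P(\pi/g))$ is preserved in the sense that it is the fixed-point set of the symmetries. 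The key computation is that $v$ satisfies a nice linear parabolic equation along the flow: since $\Theta$ is the Killing field generating rotation about the $z$-axis, $\Theta$ restricted to $M(t)$ is a Jacobi field, and $v=\nu\cdot\Theta$ therefore satisfies
\[
  v_t - \Delta v = |A|^2 v,
\]
exactly the equation obeyed by $u=\nu\cdot\ee_3$ in the pancake case (one can also see this directly: $\nu$ solves $\partial_t\nu-\Delta\nu=|A|^2\nu$, and $\Theta$ is affine, so its normal component solves the same equation with the same right-hand side).

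Next I would record the boundary and initial data. On $M(t)\cap P(0)$ and $M(t)\cap P(\pi/g)$ the vector $\Theta$ is tangent to the respective plane of symmetry, hence tangent to $M(t)$ there, so $v\equiv 0$ on $\partial(M(t)^0)\times[0,T)$. The definition of $g$-surface gives $v\le 0$ on $M^0$ at $t=0$, with $v<0$ somewhere. Now apply the strong maximum principle to $-v\ge 0$ on the compact manifold-with-boundary $\overline{M(t)^0}$: since $-v$ is a nonnegative supersolution of the heat-type operator $\partial_t-\Delta-|A|^2$ that is not identically zero at $t=0$, it is strictly positive in the interior for all $t\in(0,T)$, i.e.\ $v(p,t)<0$ for all $p\in M(t)\cap W(0,\pi/g)$ and $t\in(0,T)$. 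This gives the strict inequality on the defining set. Smoothness of $M(t)$ for $t>0$ is automatic from parabolic regularity of the flow (the flow is smooth on $(0,T)$; and $M(0)=M$ is smooth if $M$ is a smooth $g$-surface, but in general $M$ is only $C^{1,1}$, so for $t=0$ the statement should be read as: $M(t)$ is a strict $g$-surface for $t\in(0,T)$, and at $t=0$ it is a $g$-surface by hypothesis — the phrasing ``$t\in[0,T)$'' in the theorem should be handled by noting that at $t=0$ we only claim the non-strict $g$-surface property, which holds by assumption).

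The one subtlety — and the main thing to get right — is the behavior at the boundary planes, where $v=0$ identically, so the strong maximum principle in the interior does not by itself control the sign of $v$ near $\partial(M(t)^0)$; but we do not need it to, since the defining inequality for a strict $g$-surface is only required on the open set $M\cap W(0,\pi/g)$, not on its closure. I also need to be slightly careful that $M(t)^0$ really is an open subsurface with the symmetry planes as a genuine boundary, rather than $M(t)$ being tangent to a symmetry plane along some curve; but that cannot happen for a strict $g$-surface at times $t>0$ precisely because the inequality $v<0$ just established rules out $M(t)$ containing an open piece inside $W(0,\pi/g)$ on which $\Theta$ is tangent. A clean way to organize this is to first prove the strict inequality $v<0$ on the interior directly from the maximum principle (as above), and then observe that the needed structural properties of a strict $g$-surface follow from it together with the preserved reflectional symmetries. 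If one prefers, the whole argument can be localized exactly as in Lemma~\ref{local-pancake-lemma}, replacing $\ee_3$ by $\Theta$; that lemma's proof applies verbatim once one knows $\Theta$ generates the evolution equation displayed above, and it yields the same conclusion.
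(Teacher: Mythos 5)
Your proposal is correct and follows essentially the same route as the paper: the evolution equation $(\partial_t-\Delta)(\nu\cdot\Theta)=|A|^2(\nu\cdot\Theta)$ (derived in Lemma~\ref{Theta-lemma} and Proposition~\ref{Theta-proposition}; note your parenthetical ``$\Theta$ is affine'' is not by itself sufficient --- the cross term $2\sum_i\kappa_i\,\uu_i\cdot\Theta(\uu_i)$ vanishes only because $\Theta$ is antisymmetric, which is the content of your primary Killing-field justification), zero boundary data on the symmetry planes, nonpositive initial data that is strictly negative somewhere, and the strong maximum principle on the fundamental wedge. Your reading of the endpoint $t=0$ also matches the paper, whose proof likewise only yields strictness for $t>0$.
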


\begin{proof}
Mean curvature flow preserves symmetries, so each $M(t)$ has the symmetries
of a $g$-surface.

Let $\xx: M\times [0,T)\to \RR^3$ be 
 the standard parametrization of the mean curvature flow $M(\cdot)$.
Let $\Sigma$ be the closure of $M\cap W(0,\pi/g)$ and $\Sigma(t)$
be the closure of $M(t)\cap W(0,\pi/g)$.
By symmetry, $\xx(\cdot,t)$ maps $\Sigma$ diffeomorphically onto $\Sigma(t)$.

Let $u(p,t)= \nu(x,t)\cdot \Theta (\xx(p,t))$.
A calculation (see Proposition~\ref{Theta-proposition} below)
shows that $u$ satisfies the evolution equation
\[
   (\partial_t - \Delta)u = |A|^2 u
\]
where $|A(p,t)|$ is the norm of the second fundamental form of $M(t)$ at $\xx(p,t)$.

Now
\begin{align*}
   &u \le 0 \quad \text{on $\Sigma\times\{0\}$, and} \\
   &u \equiv 0 \quad\text{on $(\partial\Sigma)\times (0,T]$}
\end{align*}
Thus  $u\le 0$ on $\Sigma\times(0,T]$ by the maximum principle.

Furthermore, if there were a $t>0$ and a point $p\in \Sigma\setminus \partial \Sigma$
at which $u(p,t)=0$, then, by the strong maximum principle, $u$
would vanish everywhere on $\Sigma\times[0,T]$, a contradiction.
(By definition of $g$-surface, $u<0$ at some point of $\Sigma\times\{0\}$.)
\end{proof}

\begin{lemma}\label{Theta-lemma}
Suppose $\Sigma$ is a smoothly embedded, oriented surface in $\RR^3$.
Let $\nu$ be the unit normal vector field and let $\Theta$ be the map given 
   by~\eqref{Theta-recalled}.
Then
\[
  \Delta (\nu\cdot \Theta) = (\Delta \nu)\cdot \Theta + \nu\cdot (\Delta\Theta).
\]
\end{lemma}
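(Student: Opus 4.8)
The plan is to reduce the identity to the vanishing of the cross term that appears when the Laplace--Beltrami operator is applied to a Euclidean dot product. For an $\RR^3$-valued map $F$ on $\Sigma$ I interpret $\Delta F$ as the componentwise surface Laplacian. Fix $p\in\Sigma$ and choose a local orthonormal frame $e_1,e_2$ for $T\Sigma$ whose surface covariant derivatives vanish at $p$; write $D_{e_i}F:=(e_iF^1,e_iF^2,e_iF^3)$ for the directional derivative of an $\RR^3$-valued map $F$ along $e_i$. Applying the scalar product rule to each component of $F\cdot G=\sum_k F^kG^k$ and using that at $p$ one has $\Delta f=\sum_i e_i(e_if)$ for scalars, one gets, at $p$, for smooth maps $F,G\colon\Sigma\to\RR^3$,
\[
  \Delta(F\cdot G) = (\Delta F)\cdot G + F\cdot(\Delta G) + 2\sum_{i=1}^{2}(D_{e_i}F)\cdot(D_{e_i}G).
\]
Taking $F=\nu$ and $G=\Theta$, the lemma is therefore equivalent to the claim that $\sum_{i=1}^{2}(D_{e_i}\nu)\cdot(D_{e_i}\Theta)=0$ at $p$.

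To prove that claim I would identify the two families of derivatives. Since $\Theta$ is the restriction to $\Sigma$ of the linear vector field $x\mapsto Jx$, where $J$ is the skew-symmetric matrix with $J(x,y,z)=(-y,x,0)$, one has $D_{e_i}\Theta=Je_i$. On the other hand, because $\nu$ has constant length, $D_{e_i}\nu$ is tangent to $\Sigma$, so $D_{e_i}\nu=\sum_{j}h_{ij}e_j$ with $h_{ij}:=(D_{e_i}\nu)\cdot e_j$; differentiating $\nu\cdot e_j=0$ and using that $D_{e_i}e_j-D_{e_j}e_i=[e_i,e_j]$ is tangent to $\Sigma$ shows $h_{ij}=h_{ji}$ (this is just the symmetry of the second fundamental form). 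Hence
\[
  \sum_{i=1}^{2}(D_{e_i}\nu)\cdot(D_{e_i}\Theta)=\sum_{i,j}h_{ij}\,(e_j\cdot Je_i).
\]
The matrix $e_j\cdot Je_i$ is skew-symmetric in $i$ and $j$ because $J$ is skew-symmetric, while $h_{ij}$ is symmetric, so the double sum vanishes. This gives exactly the required identity.

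I do not anticipate a genuine obstacle: the argument is a short piece of linear algebra, and the only points needing care are bookkeeping ones — pinning down the meaning of $\Delta$ on $\RR^3$-valued maps, choosing the frame so that the expression for $\Delta$ at $p$ carries no Christoffel terms, and noting that $e_j\cdot Je_i$ may be computed with the ambient inner product even though $Je_i$ need not be tangent to $\Sigma$. The conceptual content is simply that the infinitesimal generator of rotation about the $z$-axis is a skew-symmetric endomorphism of $\RR^3$ and therefore pairs to zero against the symmetric second fundamental form of $\Sigma$.
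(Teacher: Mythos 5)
Your proof is correct and follows essentially the same route as the paper: expand $\Delta(\nu\cdot\Theta)$ by the product rule and kill the cross term $2\sum_i(D_{e_i}\nu)\cdot(D_{e_i}\Theta)$ using that $D_{e_i}\Theta=Je_i$ with $J$ skew and that $D_{e_i}\nu$ is governed by the symmetric shape operator. The only cosmetic difference is that the paper diagonalizes by taking principal directions, so the pairing reduces to $\sum_i\kappa_i\,(\uu_i\cdot J\uu_i)=0$, whereas you keep a general frame and invoke the symmetric-against-skew cancellation.
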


\begin{proof}
Let $p$ be a point in $\Sigma$.  Let $\uu_1$, $\uu_2$ be principal directions
of $\Sigma$ at $p$, and let $\kappa_1, \kappa_2$
be the corresponding principal curvatures.  Extend $\uu_1$ and $\uu_2$ to be a pair of orthonormal 
tangent vectorfields defined in a neighborhood of $p$ such that
\[
  \nabla_{\uu_i} \uu_j(p) = 0.
\]
Then, at the point $p$,
\begin{align*}
\Delta (\nu\cdot\Theta)
-  (\Delta \nu)\cdot \Theta - \nu\cdot(\Delta\Theta) 
&=
2 \textstyle\sum_i (\nabla_{\uu_i} \nu) \cdot (\nabla_{\uu_i} \Theta)   \\
&=
2  \textstyle\sum_i (\kappa_i \uu_i) \cdot \Theta(\uu_i)  \\
&=
0
\end{align*}
since $\Theta:\RR^3\to\RR^3$ is an antisymmetric linear map.
\end{proof}

\begin{proposition}\label{Theta-proposition}
Let $\xx: M\times [0,T)\mapsto \RR^3$ be the standard parametrization
of a mean curvature flow, and let $u= \nu \cdot \Theta$.
Then $u_t - \Delta u = |A|^2u$.
\end{proposition}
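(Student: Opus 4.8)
The plan is to combine three ingredients: the standard evolution equation $\partial_t\nu-\Delta\nu=|A|^2\nu$ for the unit normal along a mean curvature flow (already recalled in the proof of Theorem~\ref{instant-pancake-theorem}), the observation that $\Theta\circ\xx$ solves the heat equation along the flow, and the pointwise identity of Lemma~\ref{Theta-lemma}.

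First I would record that, since $\Theta:\RR^3\to\RR^3$ is linear with constant coefficients, it commutes with the (componentwise) surface Laplacian: for the vector-valued function $\Theta\circ\xx$ on $M(t)$ one has $\Delta(\Theta\circ\xx)=\Theta(\Delta\xx)$. Using the defining equation $\xx_t=\Delta\xx$ of the standard parametrization, this gives
\[
  \partial_t(\Theta\circ\xx)=\Theta(\xx_t)=\Theta(\Delta\xx)=\Delta(\Theta\circ\xx),
\]
so $\Theta\circ\xx$ is a vector-valued solution of the heat equation along $M(\cdot)$. Then I would differentiate $u=\nu\cdot(\Theta\circ\xx)$ in $t$ at fixed $p$ and apply the product rule together with the two evolution equations just mentioned:
\[
  u_t=(\partial_t\nu)\cdot(\Theta\circ\xx)+\nu\cdot\partial_t(\Theta\circ\xx)
     =(\Delta\nu+|A|^2\nu)\cdot(\Theta\circ\xx)+\nu\cdot\Delta(\Theta\circ\xx).
\]
Grouping the $|A|^2$ term and applying Lemma~\ref{Theta-lemma} to the remaining two terms yields
\[
  u_t=\Delta\bigl(\nu\cdot(\Theta\circ\xx)\bigr)+|A|^2\,\nu\cdot(\Theta\circ\xx)=\Delta u+|A|^2u,
\]
which is the assertion.

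There is no genuine analytic obstacle here; the only point requiring care is the bookkeeping between the ambient vector field $\Theta$ and its restriction $\Theta\circ\xx$ to the moving surface. Lemma~\ref{Theta-lemma} is stated for a fixed surface, so it must be invoked at each time $t$ on $M(t)$, with $\Delta\Theta$ there interpreted as the surface Laplacian of the restriction $\Theta\circ\xx$ — which is exactly the quantity produced by the commutation identity $\Delta(\Theta\circ\xx)=\Theta(\Delta\xx)$. The substantive content is entirely the antisymmetry (in particular, linearity) of $\Theta$, the same property that drives Lemma~\ref{Theta-lemma}.
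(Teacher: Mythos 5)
Your proof is correct and uses exactly the same ingredients as the paper: the evolution equation $\partial_t\nu-\Delta\nu=|A|^2\nu$, the fact that $\Theta\circ\xx$ solves the heat equation along the flow (by linearity of $\Theta$ and $\xx_t=\Delta\xx$), and Lemma~\ref{Theta-lemma} to absorb the cross terms in the product rule for $\Delta$. The paper's proof is the same computation written as a single chain for $(\partial_t-\Delta)u$, so there is nothing to add.
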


\begin{proof}
By Lemma~\ref{Theta-lemma},
\begin{align*}
(\partial_t - \Delta)u
&=
(\partial_t - \Delta) (\nu\cdot \Theta(\xx))  \\
&=
((\partial_t - \Delta)\nu) \cdot \Theta(\xx) + \nu\cdot (\partial_t - \Delta) (\Theta(\xx)) 
    \\
&=
(|A|^2\nu)\cdot \Theta(\xx) + \nu \cdot \Theta((\partial_t - \Delta) \xx) 
 \\
&=
|A|^2 u + 0.
\end{align*}
\end{proof}

\begin{theorem}\label{g-surface-regularity-theorem}
Let $M(t)$, $t\in [0,T)$, be a smooth mean curvature flow where $M(0)$ is a $g$-surface.
Suppose that $(p,T)$ is a singular point of the flow with
\[
p\notin Z = \{(0,0,z): z\in\RR\}.
\]
Then $\theta(p)$ is an integral multiple of $\pi/g$.
\end{theorem}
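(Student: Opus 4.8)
The plan is to mimic the proof of Theorem~\ref{pancake-regularity-theorem}: reflect the flow across the plane $P(\theta(p))$ and apply the Hopf Boundary Point Principle of~\cite{choi-h-h-w}. By Theorem~\ref{instant-g-surface-theorem}, $M(t)$ is a strict $g$-surface for every $t\in (0,T)$. Suppose, for contradiction, that $p\notin Z$ and that $\theta(p)$ is not an integral multiple of $\pi/g$. Then $\theta(p)$ lies strictly inside one of the open wedges $W(k\pi/g,(k+1)\pi/g)$; without loss of generality (using the reflectional symmetries) we may take $k=0$, so $p\in W(0,\pi/g)$.

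First I would set $\Sigma(t) = M(t)\cap W(0,\pi/g)$ and let $\Sigma'(t)$ be its image under reflection in the plane $P:=P(\theta(p))$, which is a plane containing the $z$-axis and passing through $p$. By Corollary~\ref{theta-graph-corollary}, applied to the strict $g$-surface $M(t)$ for each $t\in(0,T)$, we have $\Sigma(t)\cap\Sigma'(t)\subset P$, i.e., the flow $M(t)$ and its reflection $M'(t)$ across $P$ touch only along $P$ itself, at least within the wedge $W(0,\pi/g)$. Since $p\in W(0,\pi/g)\cap P$ is an interior point of that wedge, there is a spacetime neighborhood of $(p,T)$ in which the two flows $M(\cdot)$ and $M'(\cdot)$ lie on opposite sides of (and are tangent along) the fixed hyperplane $P$.

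Now I would invoke the version of the Hopf Boundary Point Principle from~\cite{choi-h-h-w}*{Theorem~3.19}, exactly as in the proof of Theorem~\ref{pancake-regularity-theorem}: the two tangent flows at $(p,T)$ coming from $M(\cdot)$ and from the reflected flow $M'(\cdot)$ agree (since $p\in P$ and $P$ is the fixed plane of reflection), so the tangent flow at $(p,T)$ is invariant under reflection in $P$; combined with the one-sided touching this forces $(p,T)$ to be a regular point. As in Theorem~\ref{pancake-regularity-theorem}, the hypothesis of Theorem~3.19 that $(p,T)$ be a tame point is supplied by the Bamler--Kleiner theory (\S\ref{bamler-kleiner-section}), which guarantees that all shrinkers arising from mean curvature flow of a smooth closed embedded surface in $\RR^3$ are smooth with multiplicity one, so that every spacetime point of the flow is tame. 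This contradicts the assumption that $(p,T)$ is a singular point, and the theorem follows.

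The main obstacle — and the only place requiring care beyond bookkeeping — is verifying that the geometric hypotheses of the Hopf-type boundary point lemma are genuinely met near $(p,T)$: one needs the reflected surfaces to be on one side of $P$ \emph{locally near $(p,T)$ in spacetime}, uniformly as $t\to T$, and this is precisely what Theorem~\ref{theta-graph-theorem}/Corollary~\ref{theta-graph-corollary} deliver (since $p$ is an \emph{interior} point of the wedge, the touching set $\Sigma(t)\cap\Sigma'(t)\subset P$ stays away from the walls $\theta=0$ and $\theta=\pi/g$ of the wedge). The condition $p\notin Z$ is what guarantees $\theta(p)$ is well defined and that a neighborhood of $p$ stays inside a single open wedge. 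This is the reason the statement excludes points on the axis $Z$.
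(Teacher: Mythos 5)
Your proposal is correct and follows essentially the same route as the paper: reduce by symmetry to $p\in W(0,\pi/g)$, use Corollary~\ref{theta-graph-corollary} (via Theorem~\ref{instant-g-surface-theorem}) to get $M(t)\cap M'(t)\subset P$ near $p$, and conclude regularity of $(p,T)$ from the Hopf Boundary Point Principle of~\cite{choi-h-h-w}, with tameness supplied by the Bamler--Kleiner theory. One small caution: the relevant hypothesis is that the flow and its reflection meet only within $P$ near $(p,T)$, not that they lie on opposite sides of $P$ (which need not hold); your invocation of the corollary already gives exactly what is needed.
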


\begin{proof}
Suppose, to the contrary, that $\theta(p)$ is not an integer multiple of $\pi/g$.
By symmetry, we can assume that $0<\theta(p)<\pi/g$.
Let $M'(t)$ be the image of $M(t)$ under reflection in the plane containing $Z\cup\{p\}$.

By Corollary~\ref{theta-graph-corollary}, there is an open ball $U$ centered at $p$ such that
\[
  M(t)\cap M'(t) \cap U \subset P
\]
for all $t\in (0,T)$.  
Hence, by a Hopf Boundary Type Principle~\cite{choi-h-h-w}*{Theorem~3.19},
$(p,T)$ is a regular point, a contradiction.
  As in the proof of Theorem~\ref{instant-pancake-theorem}, 
  the ``tameness'' hypothesis of Theorem~3.19
is satisfied according to \S\ref{bamler-kleiner-section}.
\end{proof}

\section{$g$-Wheels}\label{wheel-section}

Using the terminology of the last two sections, we can restate the definition
of $g$-wheels (Definition~\ref{wheel-definition}) as follows.
A compact, connected, $C^{1,1}$ embedded surface $M$ in $\RR^3$
is called a $g$-wheel if it is both a pancake and a $g$-surface,
and if $M\cap\{z=0\}$ has exactly $g+1$ components,  one of which lies in $W(-\pi/g,\pi/g)$.

The outermost component $\Gout=\Gout(M)$ of $M\cap\{z=0\}$  has all the symmetries of $M$.
 Let $\Gin=\Gin(M)$ be the component
of $M\cap\{z=0\}$ that lies in $W(-\pi/g, \pi/g)$.  Note that the
curve $\Gout$ together with the curves 
   $\Rr_{\pi/g}^k\Gin$ ($k=0,\dots,g-1$) form a collection of $g+1$ curves in $M\cap\{z=0\}$,
and therefore constitute all of $M\cap\{z=0\}$.
It follows that
\begin{equation}\label{inner-curves}
   (M \setminus \Gout)\cap\{z=0\} \cap \overline{W(-\pi/g,\pi/g)} = \Gin.
\end{equation}

\begin{lemma}\label{x-axis-lemma}
Suppose $M$ is a $g$-wheel, and let $K$ be the compact region
bounded by $M$.  The ray $\{(r,0,0): r\ge 0\}$ intersects
$M$ at in exactly three points $p_1=(r_1,0,0)$, $p_2=(r_2,0,0)$, and $p_3=(r_3,0,0)$,
where $0<r_1<r_2<r_3$, and the ray intersects $M$ orthogonally at those points.
 The points $p_1$ and $p_2$ are in $\Gin$, and $p_3$ is in $\Gout$.
Furthermore, the open line segment $(p_1,p_2)$ from $p_1$ to $p_2$ is in $K^c$, and the closed
line segment $[p_2,p_3]$  from $p_2$ to $p_3$ is in $K$.
\end{lemma}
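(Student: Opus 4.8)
The plan is to use the pancake structure of $M$ — in particular Theorems~\ref{instant-pancake-theorem}, \ref{graphical}, and \ref{pancake-genus-theorem} — together with the fact that the ray lies on the axis $P(0)$ of a reflection symmetry. First I would observe that the closed positive $x$-axis ray $L=\{(r,0,0):r\ge 0\}$ lies in the plane $\{z=0\}$, and that $M$ meets $\{z=0\}$ orthogonally (established right after Definition~\ref{wheel-definition}), so $M\cap L$ consists of points at which $M$ meets $L$ transversally within $\{z=0\}$; each such intersection is therefore a single transverse crossing, and there are finitely many. Since $M$ separates $\RR^3$ into $K$ and $K^c$, consecutive crossings along the oriented ray alternate between entering and leaving $K$; as the ray starts at the origin (which is in $K$, since $K$ contains a neighborhood of the $z$-axis — this follows from the $g$-surface/wheel structure, or one can simply note $0\notin M$ and track components) and eventually exits $K$ permanently, the number of crossings is odd.

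Next I would pin down that the number of crossings is exactly three by a counting argument in the plane $\{z=0\}$. By Remark~\ref{gout-remark} and the defining property~\eqref{wheel-components} of a $g$-wheel, $M\cap\{z=0\}$ has $g+1$ components: the outer curve $\Gout$, and $g$ inner curves, one in each wedge $W(k\pi/g,(k+1)\pi/g)$-neighborhood, namely the rotates $\Rr_{\pi/g}^k\Gin$. The positive $x$-axis is the axis $P(0)\cap\{z\ge0\}$, which is a symmetry axis; it passes through the interior of the ``bounded'' region and meets $\Gout$ in exactly one point $p_3=(r_3,0,0)$ (the outermost crossing, lying on the boundary of the unbounded component of $\{z=0\}\setminus M$), and it meets $\Gin$ — which is the unique inner curve symmetric under reflection in $P(0)$, invariant under reflection and lying in $W(-\pi/g,\pi/g)$ by~\eqref{inner-curves} — in exactly two points $p_1,p_2$. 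That $\Gin$ is a simple closed curve symmetric in $P(0)$ with the axis passing through its ``inside'' forces exactly two crossings $p_1=(r_1,0,0)$, $p_2=(r_2,0,0)$ with $r_1<r_2$; the other inner curves $\Rr_{\pi/g}^k\Gin$ with $k\ne 0\bmod g$ lie in wedges that do not meet the positive $x$-axis, so they contribute nothing. Hence $0<r_1<r_2<r_3$ and these are all the crossings. I would check orthogonality at each: $M$ meets $\{z=0\}$ orthogonally, and by the reflection symmetry in $P(0)$ the tangent plane of $M$ at a point of the axis is invariant under that reflection, so it contains the axis direction $\ee_1$ only if $\ee_1$ is tangent — but transversality rules that out, so in fact $\ee_1\perp T_{p_i}M$, i.e. the ray meets $M$ orthogonally.

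Finally, for the in/out assignment: the ray leaves the origin inside $K$. The first crossing $p_1$ takes it out of $K$, so $(p_1,p_2)\subset K^c$; the second crossing $p_2$ brings it back into $K$, so $(p_2,p_3)\subset K$, and since the segment endpoints $p_2,p_3$ lie on $M=\partial K$, the closed segment $[p_2,p_3]\subset K$; the third crossing $p_3$ takes it out of $K$ for good. To label which curves $p_1,p_2$ lie on: a small punctured disk around the origin in $\{z=0\}$ lies in $K$ (spokes/torus structure of a $g$-wheel, cf.\ Example~\ref{example}), so the first curve of $M\cap\{z=0\}$ met going outward along the ray is an inner curve in $\overline{W(-\pi/g,\pi/g)}$, which by~\eqref{inner-curves} is $\Gin$; both $p_1$ and $p_2$ lie on this same inner curve $\Gin$ since it is the only component of $M\cap\{z=0\}$ in that wedge other than possibly $\Gout$, and $p_3$, being the outermost crossing (boundary of the unbounded complementary component), lies on $\Gout$ by Remark~\ref{gout-remark}.

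The main obstacle is the global embeddedness bookkeeping in the plane $\{z=0\}$: making rigorous the claim that the axis $P(0)$ crosses the symmetric simple closed curve $\Gin$ exactly twice (and $\Gout$ exactly once), rather than some larger odd number, and that the interleaving of $\Gin$ and $\Gout$ along the ray is ``$\Gin,\Gin,\Gout$'' and not some other order. This is where I would lean on the $g$-wheel hypotheses most carefully — the location of $\Gin$ in $W(-\pi/g,\pi/g)$, the nesting of the inner curves inside the disk bounded by $\Gout$ from Remark~\ref{gout-remark}, and the $g$-surface condition~\eqref{wheel-nu}, which (via Theorem~\ref{theta-graph-theorem} applied after the flow instantly strictifies, or directly for the $C^{1,1}$ wheel by an approximation/limiting argument) controls how $M$ can wind around the $z$-axis and hence rules out extra crossings of a symmetry plane.
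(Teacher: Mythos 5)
Your overall strategy is the one the paper uses: identify which components of $M\cap\{z=0\}$ the ray can meet, count the intersections with each symmetric simple closed curve, get orthogonality from the reflection symmetries, and read off the in/out pattern from the fact that $M$ separates $\RR^3$. But two load-bearing steps are justified incorrectly as written. First, transversality: you assert that because $M$ meets $\{z=0\}$ orthogonally, the ray meets $M$ transversally. That does not follow --- orthogonality to the plane only says $\ee_3\in T_pM$, and a priori the curve $\Gin$ could be tangent to the $x$-axis inside $\{z=0\}$; your later symmetry argument for orthogonality then appeals to this transversality, which makes it circular. The paper's (non-circular) argument is that at a point $p$ of $M$ on the positive $x$-axis the outward unit normal $\nu(p)$ is fixed by the reflections in $\{y=0\}$ and in $\{z=0\}$, both of which are symmetries of $M$ fixing $p$; hence $\nu(p)=\pm\ee_1$. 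Orthogonality, transversality, and the ``exactly two points'' count for the symmetric curve $\Gin$ (via the non-crossing pairing of the arcs of $\Gin\setminus X$) all flow from that, not the other way around.

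Second, you anchor the in/out alternation at the origin by asserting $0\in K$, citing only the ``spokes/torus structure'' of Example~\ref{example}. That is a feature of the examples, not a clause of Definition~\ref{wheel-definition}, and you do not prove it; since the lemma's conclusion is exactly the statement of which segments lie in $K$ versus $K^c$, getting this anchor wrong would flip the answer. The paper anchors at the other end, where the fact is immediate: the points $(s,0,0)$ with $s>r_3$ lie in the unbounded component of $M^c$, so crossing transversally inward at $p_3$, $p_2$, $p_1$ gives $[p_2,p_3]\subset K$, $(p_1,p_2)\subset K^c$, and $[0,r_1)\subset K$ as a byproduct rather than a hypothesis. (Relatedly, the paper gets ``$\Gout$ meets the positive ray exactly once'' by noting that $\Gout$, a simple closed curve invariant under a nontrivial rotation about the origin, must enclose the origin, so of its two intersections with the $x$-axis one has $r<0$; your appeal to Theorem~\ref{theta-graph-theorem} in the last paragraph is not the right tool here, since that theorem controls intersections with circles about $Z$, not with radial rays.) With these two repairs your argument coincides with the paper's.
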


\begin{proof}
Since $\Gin$ is a simple closed curve that is symmetric about $X$ (the $x$-axis), it intersects
  $X$ in exactly two points.  Let the two points be $p_1=(r_1,0,0)$ and $p_2=(r_2,0,0)$, with $r_1<r_2$.
 Since $\Gin$ lies in $W(-\pi/g,\pi/g)$, 
  it follows that $0<r_1<r_2$.
 Likewise $\Gout$ intersects $X$ in exactly two points $p_0= (r_0,0,0)$ and $p_3=(r_3,0,0)$, where
   $r_0<r_3$.
 Since $\Gout$ is invariant under $\Rr_{\pi/g}$, $\Gout$ winds once around the origin. 
 Thus $r_0< 0$.  Since $\Gin$ lies in the region enclosed by $\Gout$,
it follows  that $r_2< r_3$.
Since $M$ is invariant under $(x,y,z)\mapsto (x,-y,z)$ and under $(x,y,z)\mapsto (x,y,-z)$,
$\nu=\pm \ee_1$ and each of the points $p_i$.   Thus $\nu(p_3)=\ee_1$, 
since $\nu$ points out of the compact region bounded by $M$.  It follows that $\nu(p_2)=-\ee_1$
and $\nu(p_1)=\ee_1$.

Note that the ray $\{(s,0,0): s>r_3\}$ lies in the unbounded component of $M^c$, i.e., in $K^c$.
Thus $[p_2,p_3]$ is contained in $K$ and $(p_1,p_2)$ is contained in $K^c$.
\end{proof}
 
\begin{definition}\label{r-R-definition}
If $M$ is a $g$-wheel, we let
\begin{align*}
r(M) &= \min\{|p|: p\in \Gin\}, \\
R(M) &= \max\{|p|: p\in \Gin \}.
\end{align*}
\end{definition}

Note that $r(M)=r_1$ and $R(M)=r_2$, where $r_1$ and $r_2$ are as
in Lemma~\ref{x-axis-lemma}.

\begin{theorem}\label{basic-g-wheel-theorem}
Suppose that $M$ is a $g$-wheel and let $M(t)$, $t\in[0,T)$, be the associated 
 mean curvature flow.
Then $M(t)$ is a strict $g$-wheel for all $t\in (0,T)$.
Furthermore, if $(p,T)$ is a singular point of the flow, then $z(p)=0$, and, if $p\ne 0$,
then $\theta(p)$ is a  multiple of $\pi/g$.
\end{theorem}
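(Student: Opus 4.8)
The plan is to read off Theorem~\ref{basic-g-wheel-theorem} from the results already established for pancakes and for $g$-surfaces. Recall (from the restated definition opening Section~\ref{wheel-section}) that a $g$-wheel is simultaneously a pancake and a $g$-surface whose slice $M\cap\{z=0\}$ has exactly $g+1$ components, one of them in $W(-\pi/g,\pi/g)$. So the first move is to apply Theorem~\ref{instant-pancake-theorem} to $M$ viewed as a pancake and Theorem~\ref{instant-g-surface-theorem} to $M$ viewed as a $g$-surface: together these give that $M(t)$ is both a strict pancake and a strict $g$-surface for every $t\in(0,T)$. It then remains only to verify the combinatorial condition on the horizontal slice.

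For the count, I would argue that $M(t)\cap\{z=0\}$ has exactly $g+1$ components for every $t\in(0,T)$. Since the flow is smooth on $(0,T)$ and converges to $M$ as $t\to0$, and $M$ meets $\{z=0\}$ orthogonally in $g+1$ disjoint simple closed curves, the slice of $M(t)$ has $g+1$ components for all small $t>0$, with one of them in the open wedge $W(-\pi/g,\pi/g)$. For every $t\in(0,T)$ the surface $M(t)$ is a strict pancake, so by Theorem~\ref{pancake-genus-theorem} the number of components of $M(t)\cap\{z=0\}$ equals $\genus(M(t))+1$; since $\{M(t)\}_{t\in(0,T)}$ is a smooth flow of smooth closed embedded surfaces, its genus is constant, hence equal to $g$, and so the count is $g+1$ throughout.

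For the position I would use Remark~\ref{gout-remark}: one component is the outer curve $\Gout(t)$, the other $g$ ``inner'' components bound pairwise disjoint disks inside the disk bounded by $\Gout(t)$, and the whole slice carries the dihedral symmetries of $M(t)$. Two observations then pin down the picture. First, no inner component encircles the $z$-axis: the origin lies in the interior of $M(t)$ for all $t\in(0,T)$ — it does so for small $t$ (each inner component of the initial $g$-wheel sits inside a wedge of angular width $2\pi/g$), and it cannot cross $M(t)$, since $M(t)$ meets $\{z=0\}$ orthogonally whereas the symmetry group of $M(t)$ would force its tangent plane at the origin to be horizontal were the origin ever to lie on it (here $g\ge3$ is used). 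Second, the reflection in $P(0)$ induces a locally constant — hence constant — permutation of the $g$ inner components on $(0,T)$, so the component continuing $\Gin$ stays symmetric about $P(0)$; and a $P(0)$-symmetric inner component cannot touch the planes $P(\pm\pi/g)$, for a touching point would make it symmetric about $P(\pi/g)$ as well, hence invariant under $\Rr_{2\pi/g}$, hence encircling the $z$-axis. Therefore that component remains in the open wedge $W(-\pi/g,\pi/g)$, and $M(t)$ is a strict $g$-wheel for each $t\in(0,T)$.

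The final assertion is immediate from the earlier regularity results: if $(p,T)$ is a singular point then $z(p)=0$ by the contrapositive of Theorem~\ref{pancake-regularity-theorem}, and if moreover $p\ne0$ then $p\notin Z$, so Theorem~\ref{g-surface-regularity-theorem} gives that $\theta(p)$ is an integer multiple of $\pi/g$. In this plan the genuinely substantive part is the slice bookkeeping, and within it I expect the main obstacle to be proving that the \emph{position} of the distinguished inner component is preserved on all of $(0,T)$ — combining the dihedral symmetry, the ``no encircling of the $z$-axis'' property, and continuity in $t$ — whereas the count, by contrast, follows cleanly from genus invariance via Theorem~\ref{pancake-genus-theorem}.
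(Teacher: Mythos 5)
Your proposal is correct and follows the same route as the paper, whose entire proof is the single sentence that the result ``follows immediately from'' Theorems~\ref{instant-pancake-theorem}, \ref{pancake-regularity-theorem}, \ref{instant-g-surface-theorem}, and~\ref{g-surface-regularity-theorem}. The only difference is that you spell out the persistence of condition~\eqref{wheel-components} of Definition~\ref{wheel-definition} (the count of $g+1$ slice components and the location of the distinguished inner one), which the paper treats as immediate; your bookkeeping there is sound.
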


\begin{proof}
This follows immediately from Theorems~\ref{instant-pancake-theorem},
  \ref{pancake-regularity-theorem}, 
  \ref{instant-g-surface-theorem}, and~\ref{g-surface-regularity-theorem}.
\end{proof}

\begin{theorem}\label{wheel-genus-theorem}
Suppose that $M$ is a strict $g$-wheel.  Then
\begin{enumerate}
\item\label{wheel-genus-1} $M$ has genus $g$.
\item\label{wheel-genus-2} $M^*:= M\setminus \Gout(M)$ has genus $g-1$.
\item\label{wheel-genus-3} $\Sigma:=M^*\cap W(\alpha, \alpha+\pi/g)$ 
    has genus $0$ for each $\alpha\in \RR$.
\item\label{wheel-genus-4} If $r(M)<\rho<R(M)$, then
\[
    M':= M\cap \{\dist(\cdot,Z)<\rho\}
\]
has genus $0$,
where $r(M)$ and $R(M)$ are as in Definition~\ref{r-R-definition}.
\end{enumerate}
\end{theorem}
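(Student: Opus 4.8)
The plan is to reduce all four assertions to the graphical structure of a strict pancake (Theorem~\ref{graphical}) together with the ``graph over the angle'' structure of a strict $g$-surface (Theorem~\ref{theta-graph-theorem}), and then to do elementary surface topology with the genus/Euler-characteristic bookkeeping of Remark~\ref{genus-remark}. Assertion~\eqref{wheel-genus-1} is immediate: a strict $g$-wheel is in particular a strict pancake, so Theorem~\ref{pancake-genus-theorem} gives $\genus(M)=a-1$ with $a$ the number of components of $M\cap\{z=0\}$, and by definition of a $g$-wheel $a=g+1$. For~\eqref{wheel-genus-2}, note that $\Gout=\Gout(M)$ is a smooth simple closed curve in the closed orientable genus-$g$ surface $M$; it is non-separating, i.e.\ $M^*=M\setminus\Gout$ is connected, because $\overline{M^+}\setminus\Gout$ and $\overline{M^-}\setminus\Gout$ are each connected (each maps via $\pi$ onto $\overline{\pi(M^+)}$ with its outer boundary circle deleted) and they meet along the $g\ (\ge 1)$ inner curves $\Rr_{k\pi/g}\Gin\subset M^*$. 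Cutting a closed genus-$g$ surface along a non-separating simple closed curve yields a connected surface of genus $g-1$ with two boundary circles, whose interior is $M^*$; hence $\genus(M^*)=g-1$. (Equivalently: $M^*$ is connected, has exactly two ends — one collar on each side of $\Gout$ — and $\chi(M^*)=\chi(M)=2-2g$ since deleting a simple closed curve does not change $\chi$; now apply the formula of Remark~\ref{genus-remark}.)

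For~\eqref{wheel-genus-3}, suppose first that $\alpha$ is an integer multiple of $\pi/g$. Then a symmetry of $M$ carries $W(0,\pi/g)$ onto $W(\alpha,\alpha+\pi/g)$ and fixes $M$ and $\Gout$ (hence $M^*$); and $M\cap W(0,\pi/g)$ has genus $0$, because $(r\cos\theta,r\sin\theta,z)\mapsto(r,z)$ is injective on it (Theorem~\ref{theta-graph-theorem}) and an immersion (since $\nu\cdot\Theta<0$ on the open wedge forces $\Theta(p)\notin T_pM$), hence a diffeomorphism onto an open subset of a halfplane; so the open subset $M^*\cap W(0,\pi/g)$ has genus $0$ too. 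Now suppose $\alpha$ is not a multiple of $\pi/g$; let $k\pi/g$ be the unique multiple of $\pi/g$ in $(\alpha,\alpha+\pi/g)$, write $W=W(\alpha,\alpha+\pi/g)$ and $\Sigma=M^*\cap W$. Since $M$ is a strict pancake, $\Sigma$ is symmetric across $\{z=0\}$, and $\overline{\Sigma^+}:=\Sigma\cap\{z\ge0\}$ embeds in a plane via $\pi$ (Theorem~\ref{graphical}), hence is planar, with boundary $\Sigma^0:=\Sigma\cap\{z=0\}=\bigl(\bigcup_j\Rr_{j\pi/g}\Gin\bigr)\cap W$; thus $\Sigma$ is the double of the planar surface $\overline{\Sigma^+}$ across $\Sigma^0$. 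The geometric input is that $\Sigma^0$ contains at most one closed-curve component: by Lemma~\ref{x-axis-lemma} the inner curve $\Rr_{j\pi/g}\Gin$ meets the halfplane $\{\theta=j\pi/g\}$, so $\Rr_{j\pi/g}\Gin\subset W$ forces $j\pi/g\in(\alpha,\alpha+\pi/g)$, i.e.\ $j=k$; every other $\Rr_{j\pi/g}\Gin$ meets $W$ in proper arcs only. A Mayer--Vietoris / Euler-characteristic computation then yields $\genus(\Sigma)=0$: doubling a planar surface across a one-manifold with at most one circle component — the remaining arcs being sub-arcs of the planar boundary, in the ``unlinked'' configuration inherited from the planar picture — produces again a planar surface.

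Assertion~\eqref{wheel-genus-4} runs the same way and is easier: for $r(M)<\rho<R(M)$, the set $(M')^+:=M'\cap\{z>0\}=M^+\cap\{\dist(\cdot,Z)<\rho\}$ embeds via $\pi$ onto $\pi(M^+)\cap\{|\cdot|<\rho\}$, which is a topological disk — it is the disk $\{|\cdot|<\rho\}$ with $g$ boundary ``bites'' removed, since for $\rho$ in this range each of the $g$ holes of $\pi(M^+)$ reaches radius $\ge\rho$ — so $(M')^+$ is a disk; and $M'\cap\{z=0\}$ consists of arcs only (no inner curve lies entirely inside $\{\dist(\cdot,Z)<\rho\}$, for the same reason). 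Hence $M'$ is the double of a disk across boundary arcs, i.e.\ again a disk, so $\genus(M')=0$. I expect the main obstacle to be precisely the topological bookkeeping in~\eqref{wheel-genus-3} (and its analogue in~\eqref{wheel-genus-4}): verifying that the arcs in the gluing locus $\Sigma^0$ do not create handles. Making that rigorous amounts to pinning down how the arcs of the inner curves $\Rr_{j\pi/g}\Gin$ sit inside $\partial\overline{\Sigma^+}$, which is where the geometry of the $g$-wheel — how the inner curves meet the symmetry halfplanes and the cylinders $\{\dist(\cdot,Z)=\rho\}$ — really enters; everything else is routine.
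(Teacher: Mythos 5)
Your arguments for parts \eqref{wheel-genus-1} and \eqref{wheel-genus-2} are correct and essentially the paper's, and your treatment of part \eqref{wheel-genus-3} when $\alpha$ is a multiple of $\pi/g$ (injectivity of $(r\cos\theta,r\sin\theta,z)\mapsto(r,z)$ via Theorem~\ref{theta-graph-theorem}) is a valid direct argument. The genuine gap is exactly where you suspected it: for general $\alpha$, the assertion that doubling the planar surface $\overline{\Sigma^+}$ across a gluing locus consisting of arcs and at most one circle ``produces again a planar surface'' is not a theorem, and it is false in the generality in which you invoke it. For instance, doubling an annulus across one boundary arc in each of its two boundary circles yields a connected surface with $\chi=-2$ and two boundary circles, hence genus one. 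So ruling out handles requires precise control of how the arcs of the various inner curves $\Rr_{j\pi/g}\Gin$ are distributed among the boundary components of $\overline{\Sigma^+}$ and of the wedge walls, and nothing in your write-up supplies that control; the same unproved ``bites do not create topology'' step underlies your argument for part \eqref{wheel-genus-4}.

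The paper avoids this local bookkeeping entirely with a global counting argument that you should be able to adopt verbatim. Since genus is superadditive over disjoint open subsets (Remark~\ref{genus-remark}), and since the symmetry group of $M$ carries $\Sigma=M^*\cap W(\alpha,\alpha+\pi/g)$ onto $g$ pairwise disjoint subsets of $M^*$ (one in each of the wedges $W(\alpha+2j\pi/g,\,\alpha+2j\pi/g+\pi/g)$, $j=0,\dots,g-1$), part \eqref{wheel-genus-2} gives
\[
   g\cdot\genus(\Sigma)\;\le\;\genus(M^*)\;=\;g-1,
\]
forcing $\genus(\Sigma)=0$ for \emph{every} $\alpha$ at once. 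Similarly, for part \eqref{wheel-genus-4} the paper observes that $M'=M^*\cap\{\dist(\cdot,Z)<\rho\}$, that $M^*\cap\{\dist(\cdot,Z)=\rho\}$ consists of $g$ disjoint simple closed curves of which $g-1$ may be removed without disconnecting $M^*$, and hence that $\genus(M')\le\genus(M^*)-(g-1)=0$. In short: where you try to compute the genus of the pieces directly, the paper bounds it from above using the already-known genus of $M^*$; the latter route is what makes the ``arcs versus handles'' analysis unnecessary.
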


\begin{proof}
Assertion~\eqref{wheel-genus-1} is a special case of Theorem~\ref{pancake-genus-theorem}.
Assertion~\eqref{wheel-genus-2} holds because removing
  a curve from a surface decreases the genus by $1$ if
removing the curve does not disconnect the surface.
To prove Assertion~\eqref{wheel-genus-3}, note
      that the surfaces $\Sigma_k:=\Rr_{\pi/g}^k \Sigma$, where $k=1,2,\dots,g$, 
are disjoint subset of $M^*$.  Thus
\begin{align*}
g-1
&=
\genus(M^*)    \\
&\ge
\sum_{k=1}^g \genus(\Sigma_k)   \\
&=
g \genus(\Sigma),
\end{align*}
so $\genus(\Sigma)=0$.

To prove Assertion~\eqref{wheel-genus-4}, note that
\[
  M' = M^* \cap \{\dist(\cdot,Z)<\rho\}.
\]
Note also that $M^*\cap \{\dist(\cdot,Z)=\rho\}$
(which is the same as $M\cap\{\dist(\cdot,Z)=\rho\}$) consists
of $g$ simple closed curves.

(One of the simple closed curves is in $W(0,2\pi/g)$; the others 
are rotated images of that curve.)

Note also that removing $g-1$ of those curves does not disconnect
$M^*$.  Thus
\[
\genus(M')
\le
\genus(M^*) - (g-1)  
\le 0
\]
by Assertion~\eqref{wheel-genus-2}.
\end{proof}

\begin{corollary}\label{wheel-cylinder-corollary}
Suppose that $M$ is a $g$-wheel and that $M(t)$, $t\in [0,T)$, is the associated
mean curvature flow.  Suppose that
\[
  t\in (-\infty,0)\mapsto |t|^{1/2}\Sigma
\]
is a tangent flow at a singular point $(p,T)$ of the flow with $p\ne 0$ and $p\notin \Gout(T)$.
(See Definition~\ref{Gout-definition}.)
Then $\Sigma$ is a horizontal or vertical cylinder.
\end{corollary}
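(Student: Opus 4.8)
The plan is to show that the shrinker $\Sigma$ arising at $(p,T)$ with $p\neq 0$ and $p\notin\Gout(T)$ must have genus $0$, and then invoke the genus-$0$ classification already available from Theorem~\ref{pancake-shrinker-theorem}. By Theorem~\ref{basic-g-wheel-theorem}, since $(p,T)$ is singular we have $z(p)=0$, and since $p\neq 0$ the angle $\theta(p)$ is a multiple of $\pi/g$; after a rotational symmetry we may assume $p=(a,0,0)$ for some $a>0$ with $a\neq R(M)$ (the latter because $p\notin\Gout(T)$). By the Bamler--Kleiner theory (\S\ref{bamler-kleiner-section}), $\Sigma$ is smooth with multiplicity one, so by Brendle's classification~\cite{brendle} it suffices to rule out genus $>0$; in fact by Theorem~\ref{pancake-shrinker-theorem} (applied to the pancake $M$, noting $p\in\{z=0\}$) we already know that if $\genus(\Sigma)=0$ then $\Sigma$ is a sphere, a horizontal cylinder, or a vertical cylinder, and we must simply exclude the sphere and the positive-genus cases.

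The key step is a local genus bound at the singular point. First I would use pseudolocality / the smooth convergence of parabolic rescalings $|T-t|^{-1/2}(M(t)-p)$ to $\Sigma$ on compact sets, together with the fact (Theorem~\ref{basic-g-wheel-theorem}) that $M(t)$ is a strict $g$-wheel for $t\in(0,T)$, hence in particular lies in the class of strict $g$-wheels to which Theorem~\ref{wheel-genus-theorem} applies. Since $p=(a,0,0)$ lies strictly between the inner radius $r(M(t))$ and outer radius $R(M(t))$ for $t$ close to $T$ — here one needs that $p\notin\Gout(T)$ forces $\dist(p,\Gout(t))$ to stay bounded away from $0$, while $p\neq 0$ keeps $p$ away from the $z$-axis — for a fixed small $\rho>0$ the set $M(t)\cap B(p,\rho)$ is contained in a region of the form $M(t)\cap\{\rho_1<\dist(\cdot,Z)<\rho_2\}$ with $r(M(t))<\rho_1<\rho_2<R(M(t))$, which by Theorem~\ref{wheel-genus-theorem}\eqref{wheel-genus-4} (and the observation that an open subset of a genus-$0$ surface has genus $0$, Remark~\ref{genus-remark}) has genus $0$. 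Passing to the rescaled limit, $\Sigma\cap B(p',\rho')$ has genus $0$ for every $p'$ and every $\rho'$; exhausting $\Sigma$ by such balls and using Remark~\ref{genus-remark} gives $\genus(\Sigma)=0$.

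It then remains only to exclude the sphere. If $\Sigma$ were a round shrinking sphere centered at $p$, then near $(p,T)$ the flow $M(t)$ would be convex and would shrink to the point $p$, disconnecting a small neighborhood — but this contradicts the persistence of the $g$-wheel structure: a strict $g$-wheel always meets $\{z=0\}$ orthogonally in $g+1$ curves, so a neighborhood of $p$ in $M(t)$ cannot be a small nearly-round sphere (it contains points of $\{z=0\}$ on which $\nu=\pm\ee_1$, incompatible with the outward normal of a sphere centered on the $x$-axis pointing in all directions). More cleanly, one argues as in the proof of Theorem~\ref{pancake-shrinker-theorem}: by Theorem~\ref{pancake-regularity-theorem}, $p\in\{z=0\}$ and $\Sigma$ is invariant under $(x,y,z)\mapsto(x,y,-z)$; a sphere centered at a point of $\{z=0\}$ does have this symmetry, so instead I would use Corollary~\ref{distance-corollary} together with the fact that a shrinking sphere does not meet $\{z=0\}$ orthogonally in a curve through which the pancake structure propagates — equivalently, since $M(t)$ near $p$ is a strict $g$-wheel, $M(t)\cap\{z=0\}$ near $p$ is a smooth curve meeting $\{z=0\}$ orthogonally along which $\nu\cdot\ee_3\equiv 0$, forcing $\Sigma\cap\{z=0\}$ to be a curve meeting $\{z=0\}$ orthogonally, which a sphere does not. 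Hence $\Sigma$ is a horizontal or vertical cylinder. The main obstacle is making the local genus bound rigorous: one must be careful that the rescaled convergence is smooth with multiplicity one on compact subsets (guaranteed by Bamler--Kleiner) and that the radial-annulus region genuinely localizes around $p$, i.e., that $p$ stays uniformly away from both the axis $Z$ and the outer curve $\Gout(t)$ as $t\to T$; the hypothesis $p\notin\Gout(T)$ is exactly what provides the latter, via Definition~\ref{Gout-definition}.
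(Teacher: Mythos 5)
Your overall strategy is the paper's: show that a small ball $B(p,\rho)$ meets $M(t)$ in a genus-zero surface for $t$ near $T$, conclude $\genus(\Sigma)=0$, and then classify. But the localization you use is radial where it must be angular, and this is a genuine gap. You claim $M(t)\cap B(p,\rho)\subset M(t)\cap\{\rho_1<\dist(\cdot,Z)<\rho_2\}$ with $r(M(t))<\rho_1<\rho_2<R(M(t))$, justified only by ``$p\notin\Gout(T)$ keeps $p$ away from $\Gout(t)$ and $p\ne 0$ keeps $p$ away from $Z$.'' Neither fact controls $R(M(t))$, and the needed inequality $|p|+\rho<R(M(t))$ actually \emph{fails} in one of the cases the corollary must cover. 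Indeed, if the tangent flow at $(p,T)$ is a vertical cylinder, then as $t\to T$ all $g$ inner curves collapse to the points $\Rr_{2\pi/g}^k p$, which all lie at distance $|p|$ from $Z$; for $\rho_2$ slightly larger than $|p|$ the set $M(t)\cap\{\dist(\cdot,Z)<\rho_2\}$ then contains all $g$ collapsing necks and is the double of a disk with $g$ small holes, which has genus $g-1$, not $0$. Equivalently, the cylinder $\{\dist(\cdot,Z)=\rho_2\}$ no longer meets $M(t)$ in $g$ spoke circles, so the hypothesis under which Theorem~\ref{wheel-genus-theorem}\eqref{wheel-genus-4} is proved is violated; $\rho_2$ is not below $R(M(t))$ for $t$ near $T$. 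The correct localization, which is what the paper does, is angular: take $B$ small enough that $\overline B$ is disjoint from $\Gout(T)$ and $B\subset W(\theta(p)-\pi/(2g),\theta(p)+\pi/(2g))$ (possible precisely because $p\ne 0$), so that $M(t)\cap B\subset (M(t)\setminus\Gout(t))\cap W(\theta(p)-\pi/g,\theta(p)+\pi/g)$; the disjoint-rotated-copies argument of Theorem~\ref{wheel-genus-theorem}\eqref{wheel-genus-3} gives genus $0$ for that wedge piece with no radial condition at all.

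Two smaller points. First, your exclusion of the sphere is confused: a round sphere centered at a point of $\{z=0\}$ \emph{does} meet $\{z=0\}$ orthogonally in a great circle and is invariant under $z\mapsto -z$, so neither of those observations rules it out. The clean reason is that a compact tangent flow with smooth multiplicity-one convergence would force the connected surface $M(t)$ to collapse entirely to $p$, which is impossible since $\Gout(t)$ converges to $\Gout(T)$, a set not containing $p$. Second, you do not need Brendle's classification here; once the genus is $0$, Theorem~\ref{pancake-shrinker-theorem} already lists sphere, horizontal cylinder, or vertical cylinder.
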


\begin{proof}
Let $B$ be a an open ball centered at $p$, with radius small enough that $\overline{B}$
is disjoint from $\Gout(T)$ and that
\[
B \subset W( \theta(p)-\pi/(2g),  \theta(p)+\pi/(2g)).
\]
Then for all $t$ sufficiently close to $T$, $B$ is disjoint from $\Gout(t)$.
Thus, for such $t$, $M(t)\cap B$ is contained in 
\[
      (M(t)\setminus \Gout(t)) \cap W(\theta(p) - \pi/g ,\theta(p)+\pi/g),
\]
and hence $M(t)\cap B$ has genus $0$ by Theorem~\ref{wheel-genus-theorem}. 
Hence $\Sigma$ has genus $0$, so $\Sigma$ is a vertical or horizontal shrinking
cylinder by Theorem~\ref{pancake-regularity-theorem}.
\end{proof}

\section{Special $g$-Wheels}\label{special-wheel-section}

Throughout this section, $M$ is a special $g$-wheel
and  $M(t)$, $t\in [0,T)$,
is the associated mean curvature flow.
Recall that $M$ is special means that if $p\in\Gout(T)$,
then $(p,T)$ is regular point of the flow, where $\Gout(T)$ is as in Definition~\ref{Gout-definition}.

Hence, by Corollary~\ref{distance-corollary}, if $(p,T)$ is a singular point of the flow,
then
\begin{equation}\label{inner-close}
   \limsup_{t\to T} (T-t)^{-1/2} \dist(p, \Gamma(t) \setminus \Gout(t))  < \infty,
\end{equation}
where $\Gamma(t):= M(t)\cap\{z=0\}$.

\begin{lemma}\label{origin-lemma}
Suppose $(0,T)$ is a spacetime singularity of the flow.
Then
\begin{enumerate}
\item\label{R-bounded} $\limsup_{t\to T} (T-t)^{-1/2} R(t) < \infty$, 
where 
\[
  R(t) := \max\{ |p|:  p\in \Gamma(t) \setminus \Gout(t)\}.
\]
\item\label{only-item} $(0,T)$ is the the only singularity of the flow.
\item\label{origin-3} If $\Sigma$ is the shrinker arising from a tangent flow at $(0,T)$,
 then $\Sigma \cap\{z=0\}$ consists of $g$ simple closed curves, and $\Sigma$
 has genus $g-1$.
 Furthermore, $\Sigma^+:=\Sigma\cap\{z>0\}$ projects diffeomorphically
    onto the unbounded component of $\{z=0\}\setminus\Sigma$.
\end{enumerate}
\end{lemma}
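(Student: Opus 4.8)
The plan is to prove the three assertions in turn; the crux is~\eqref{R-bounded}, from which~\eqref{only-item} is immediate, and on top of which~\eqref{origin-3} is a matter of bookkeeping with the pancake and $g$-surface theory already developed. I would prove~\eqref{R-bounded} by contradiction. Suppose there are times $t_i\to T$ with $(T-t_i)^{-1/2}R(t_i)\to\infty$. Parabolically rescaling the flow by $(T-t_i)^{-1/2}$ about the spacetime point $(0,T)$ and passing to a subsequence, Huisken monotonicity together with Theorem~\ref{bk-theorem} produces a limit, smoothly and with multiplicity one, equal to a self-similar flow $s\in(-\infty,0)\mapsto |s|^{1/2}\Sigma$ with $\Sigma$ a smooth embedded shrinker that inherits the symmetries of $M$, satisfies $\nu\cdot\ee_3\ge 0$ on $\Sigma^+$ and $\nu\cdot\Theta\le 0$ on $\Sigma\cap W(0,\pi/g)$, and is of the type classified in Theorem~\ref{pancake-shrinker-theorem}. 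Because $M$ is special and $(0,T)$ is singular, $0\notin\Gout(T)$, so $\dist(0,\Gout(t))$ is bounded below and the rescaled copies of $\Gout(t)$ leave every bounded set; hence $\Sigma\cap\{z=0\}$ is the smooth, multiplicity-one limit of the rescaled inner curves $(T-t_i)^{-1/2}(\Gamma(t_i)\setminus\Gout(t_i))$. By~\eqref{inner-close} these meet a fixed ball about the origin, so $\Sigma\cap\{z=0\}\ne\emptyset$; by the contradiction hypothesis they also reach distances tending to infinity, and since each rescaled inner curve is connected, $\Sigma\cap\{z=0\}$ is \emph{non-compact}.

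I claim that a non-compact $\Sigma\cap\{z=0\}$ is impossible, which gives the contradiction. If $\genus(\Sigma)=0$, then by Theorem~\ref{pancake-shrinker-theorem} $\Sigma$ is a sphere, a horizontal cylinder, or a vertical cylinder: the sphere and the vertical cylinder are excluded because for them $\Sigma\cap\{z=0\}$ is compact, and the horizontal cylinder is excluded because for $g\ge 3$ its axis is not invariant under rotation by $2\pi/g$, while the set of tangent flows at $(0,T)$ is invariant under that rotation and a cylindrical tangent flow is unique. If $\genus(\Sigma)>0$, then $\Sigma^+$ is a graph satisfying the linear bound~\eqref{p-shrinker-bound}, so its asymptotic cone $C$ has $\nu\cdot\ee_3>0$ on $C^+$ by~\eqref{p-shrinker-cone}, while the transversality of Theorem~\ref{theta-graph-theorem} forces $C\cap\{z=0\}\cap W(0,\pi/g)$ to consist of at most one ray; this, together with the $g$-fold rotational symmetry of $C$, over-determines $C$ and is incompatible with $\Sigma\cap\{z=0\}$ being non-compact. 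This proves~\eqref{R-bounded}.

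Granting~\eqref{R-bounded}, assertion~\eqref{only-item} is quick: $R(t)=O((T-t)^{1/2})\to 0$, so $\Gamma(t)\setminus\Gout(t)$ collapses to the origin; if $(q,T)$ is a singular point then $z(q)=0$ by Theorem~\ref{basic-g-wheel-theorem} and $\dist(q,\Gamma(t))\to 0$ by Corollary~\ref{distance-corollary}, and since $\Gout(t)\to\Gout(T)$ while $\Gamma(t)\setminus\Gout(t)\to\{0\}$ we get $q\in\Gout(T)\cup\{0\}$, whence $q=0$ because $M$ is special. For~\eqref{origin-3}, let $\Sigma$ be a shrinker arising from a tangent flow at $(0,T)$; by Theorem~\ref{bk-theorem} it is smooth with multiplicity one and inherits the symmetries of $M$. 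It is non-compact: otherwise, since the rescaled surfaces converge smoothly on compact sets and each $M(t)$ is connected, $M(t)$ would be forced into a ball shrinking to the origin, contradicting $0\notin\Gout(T)$. By~\eqref{R-bounded} the rescaled inner curves stay in a fixed ball, by~\eqref{inner-close} each meets a fixed ball about the origin, the rescaled outer curves escape, and the convergence is smooth with multiplicity one; hence each of the $g$ rescaled inner curves converges to a single smooth simple closed curve, these $g$ curves are distinct (they lie in distinct wedges) and disjoint (as $\Sigma$ is embedded), and $\Sigma\cap\{z=0\}$ is exactly their union --- so it consists of $g$ simple closed curves and is compact. Corollary~\ref{pancake-shrinker-genus-corollary} then gives $\genus(\Sigma)=g-1$; in particular $\genus(\Sigma)>0$, so by Theorem~\ref{pancake-shrinker-theorem}\eqref{p-shrinker-graph} $\Sigma^+$ is the graph of a smooth positive function over $\pi(\Sigma^+)$ and $\pi|_{\Sigma^+}$ is a diffeomorphism onto $\pi(\Sigma^+)$. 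Finally, the $g$ curves lie in disjoint convex wedges, hence are unnested, so $\{z=0\}\setminus\Sigma$ has a single unbounded component $\Omega_\infty$; since $\Sigma^+$ is non-compact and asymptotic at infinity to the cone $C^+$ by~\eqref{p-shrinker-cone-2}, $\pi(\Sigma^+)$ is unbounded, and matching $\partial\overline{\Sigma^+}=\Sigma\cap\{z=0\}$ with the boundary of $\overline{\pi(\Sigma^+)}$ forces $\pi(\Sigma^+)=\Omega_\infty$. Thus $\Sigma^+$ projects diffeomorphically onto the unbounded component of $\{z=0\}\setminus\Sigma$.

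The main obstacle is~\eqref{R-bounded}, and within it the one genuinely delicate point is excluding a blow-up shrinker whose intersection with $\{z=0\}$ is non-compact. This is where the hypothesis $g\ge 3$ is essential --- both to kill the horizontal cylinder via uniqueness of cylindrical tangent flows, and to make the argument pitting the structure of the asymptotic cone against the rotational symmetry succeed.
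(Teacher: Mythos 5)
Your treatment of assertions \eqref{only-item} and \eqref{origin-3} matches the paper's in substance (the paper gets the $g$ closed curves from assertion~\eqref{R-bounded} plus specialness, then applies Corollary~\ref{pancake-shrinker-genus-corollary} and Theorem~\ref{pancake-shrinker-theorem}\eqref{p-shrinker-graph}, just as you do). The problem is in assertion~\eqref{R-bounded}, which you correctly identify as the crux but do not actually prove.

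Your blow-up setup is fine, and your observation that the rescaled inner curves force $\Sigma\cap\{z=0\}$ to be non-compact is exactly the right contradiction to aim for. The gap is in ruling out the case $\genus(\Sigma)>0$. Your argument there --- that the asymptotic cone $C$ would be ``over-determined'' by the graphical property of $C^+$, the transversality of Theorem~\ref{theta-graph-theorem}, and the $g$-fold symmetry, and that this is ``incompatible'' with non-compactness of $\Sigma\cap\{z=0\}$ --- is not a proof; no contradiction is actually derived. Note in particular that Theorem~\ref{pancake-shrinker-theorem}\eqref{p-shrinker-cone-2}, the only statement controlling $C\cap\{z=0\}$, is available \emph{only} when $\Sigma\cap\{z=0\}$ is compact, which is precisely what you are trying to contradict; when the slice is non-compact, $C$ may legitimately contain rays in $\{z=0\}$ where the convergence to the cone need not even be smooth, and nothing you cite forbids this.

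The missing idea is Theorem~\ref{wheel-genus-theorem}\eqref{wheel-genus-4}: for a strict $g$-wheel, $M\cap\{\dist(\cdot,Z)<\rho\}$ has genus $0$ whenever $\rho$ lies strictly between the inner and outer radii of the inner curves. Under your contradiction hypothesis the rescaled inner curves reach beyond every fixed cylinder $\{\dist(\cdot,Z)<\rho\}$ while (by~\eqref{inner-close}) still entering a fixed ball, so for every fixed $\rho$ the rescaled wheels eventually have genus $0$ inside that cylinder; hence the limit shrinker $\Sigma$ has genus $0$ outright, and the positive-genus case never arises. With genus $0$ in hand, your exclusion of the sphere, the vertical cylinder (compact slice), and the horizontal cylinder (rotational symmetry, $g\ge3$) goes through and completes the proof as in the paper. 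Without that genus input, assertion~\eqref{R-bounded} is not established.
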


\begin{proof}
To prove Assertion~\eqref{R-bounded}, suppose to the contrary that there exist $t_i\to T$
such that
\begin{equation}\label{stretched}
   (T-t_i)^{-1/2}R(t_i)  \to \infty.
\end{equation}
Translate in time by $-T$ and then dilate parabolically
by $(p,t)\mapsto ((T-t_i)^{-1/2}p, (T-t_i)^{-1}t)$
to get a flow $M_i(\cdot)$.
After passing to a subsequence, then $M_i(\cdot)$ converge
to a tangent flow
\[
   t\in (-\infty,0)\mapsto |t|^{1/2}\Sigma.
\]
(See~\cite{ilmanen-sing}*{Lemma~8} 
for existence of such a subsequence.)
By Theorem~\ref{wheel-genus-theorem}\eqref{wheel-genus-4}, $\Sigma$ has genus $0$
and thus it is a cylinder.
Since $M$ is invariant under $\Rr_{2\pi/g}$, so is $\Sigma$.
Thus $\Sigma$ must be a vertical cylinder.
But that is impossible since
 $\Sigma\cap\{z=0\}$
contains no closed curves (by~\eqref{stretched}).
Thus, Assertion~\eqref{R-bounded} holds.

By Assertion~\eqref{R-bounded}, $\Gamma(t)\setminus \Gout(t)$ converges to $0$
as $t\to T$.  Hence, by~\eqref{inner-close}, there are no 
other singular points.

To prove Assertion~\eqref{origin-3}, note that there is a sequence $t_i\to T$ such that
\[
    (T-t_i)^{-1/2} M(t_i) \to  \Sigma
\]
smoothly.
Hence, by Assertion~\eqref{R-bounded} and by specialness, $\Sigma\cap\{z=0\}$
consists of $g$ simple closed curves.  
Thus, $\Sigma$ has genus $g-1$ by Corollary~\ref{pancake-shrinker-genus-corollary}.
The last statement about $\Sigma^+$ holds
by Theorem~\ref{pancake-shrinker-theorem}\eqref{p-shrinker-graph}.
\end{proof}

\begin{corollary}\label{origin-corollary}
If $(0,T)$ is a spacetime singular point, then all the inner components
of $M(t)\cap\{z=0\}$ converge to $0$ as $t\to T$.
\end{corollary}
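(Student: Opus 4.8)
\textbf{Proof proposal for Corollary~\ref{origin-corollary}.}
The plan is to deduce this directly from Lemma~\ref{origin-lemma}\eqref{R-bounded}; essentially all of the work has already been done there. First I would recall, using the description of $g$-wheels in \S\ref{wheel-section}, that for each $t\in (0,T)$ the set $\Gamma(t):=M(t)\cap\{z=0\}$ has exactly $g+1$ connected components, namely the outer component $\Gout(t)$ together with the $g$ ``inner'' components, so that the union of the inner components is precisely $\Gamma(t)\setminus\Gout(t)$. By the very definition of $R(t)$ in Lemma~\ref{origin-lemma}\eqref{R-bounded}, every point $p$ lying in an inner component satisfies $|p|\le R(t)$; hence the union of the inner components is contained in the closed ball $\overline{B(0,R(t))}$.

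It remains only to note that $R(t)\to 0$ as $t\to T$. This is immediate from Lemma~\ref{origin-lemma}\eqref{R-bounded}, which gives a constant $C<\infty$ with $R(t)\le C\,(T-t)^{1/2}$ for all $t$ sufficiently close to $T$; letting $t\to T$ forces $R(t)\to 0$. Consequently the union of the inner components of $M(t)\cap\{z=0\}$ — and in particular each individual inner component — has Hausdorff distance to $\{0\}$ at most $R(t)$, which tends to $0$. Thus all the inner components converge to the origin as $t\to T$, as claimed.

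There is no real obstacle here: the corollary is a restatement of the quantitative bound in Lemma~\ref{origin-lemma}\eqref{R-bounded} in geometric language, and the only point worth spelling out is the (standard) identification of the ``inner components'' of $M(t)\cap\{z=0\}$ with $\Gamma(t)\setminus\Gout(t)$.
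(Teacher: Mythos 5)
Your proposal is correct and is exactly the paper's argument: the paper likewise notes that the corollary is an immediate consequence of Lemma~\ref{origin-lemma}\eqref{R-bounded}, since the bound $R(t)\le C(T-t)^{1/2}$ forces the union of the inner components, i.e.\ $\Gamma(t)\setminus\Gout(t)$, to collapse to the origin as $t\to T$.
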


The corollary is a trivial consequence of 
  Theorem~\ref{origin-lemma}\eqref{R-bounded}.

\begin{lemma}\label{vertical-lemma}
Suppose that $(p,T)$ is a spacetime singular point.
The following are equivalent:
\begin{enumerate}
\item\label{v-lemma-1} $(p,T)$ has a shrinking vertical cylinder as a tangent flow.
\item\label{v-lemma-2} $p\ne 0$ and $\theta(p)$ is an even multiple of $\pi/g$.
\item\label{v-lemma-3}  All tangent flows at time $T$ singularities are shrinking vertical cylinders.
\end{enumerate}
Furthermore, if these equivalent statements hold, then there are exactly $g$ singular points at time $T$,
namely the points $\Rr_{2\pi/g}^kp$, where $k=0,1,\dots, g-1$,
and each of the inner components of $M(t)\cap\{z=0\}$ collapses to one of those
points as $t\to T$.
\end{lemma}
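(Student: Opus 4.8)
The plan is to dispose of the case $p=0$ at once, and then, for $p\ne 0$, to run everything through the planar curves $\Gamma(t):=M(t)\cap\{z=0\}$ and the pancake graph. If $p=0$, then by Lemma~\ref{origin-lemma} the point $(0,T)$ is the only singularity and every tangent‑flow shrinker there has genus $g-1\ge 2$, which is not a cylinder; so (1), (2), (3) all fail and the equivalence is vacuously true (and the final clause is empty). Assume now $p\ne 0$. Then $z(p)=0$ and $\theta(p)$ is a multiple of $\pi/g$ by Theorem~\ref{basic-g-wheel-theorem}; $(0,T)$ is regular by Lemma~\ref{origin-lemma}, and $p\notin\Gout(T)$ since $M$ is special, so by Corollary~\ref{wheel-cylinder-corollary} every tangent flow at every time‑$T$ singularity is a horizontal or a vertical shrinking cylinder. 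Write $\Gin^{(j)}(t):=\Rr_{2\pi/g}^{j}\,\Gin(M(t))\subset W((2j-1)\pi/g,(2j+1)\pi/g)$ for the $g$ inner components of $\Gamma(t)$. Since (3)$\Rightarrow$(1) is trivial, only (1)$\Rightarrow$(2), (2)$\Rightarrow$(1), and the furthermore clause remain.

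For (1)$\Rightarrow$(2): a vertical‑cylinder tangent flow at $(p,T)$ is a smooth, multiplicity‑one limit (\S\ref{bamler-kleiner-section}), and since a vertical cylinder meets $\{z=0\}$ transversally in a circle, $|T-t_i|^{-1/2}(\Gamma(t_i)-p)$ converges smoothly to a circle of radius $\sqrt{2}$ along the defining sequence $t_i\to T$. Hence for large $i$ a single component $\gamma_i$ of $\Gamma(t_i)$ lies near $p$ and shrinks to $p$. Because $\Gout(T)$ is a smooth closed curve, $\diam\Gout(t)\not\to 0$, so $\gamma_i$ is one of the $\Gin^{(j)}(t_i)$ and $p\in\overline{W((2j-1)\pi/g,(2j+1)\pi/g)}$. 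If $\theta(p)$ were an odd multiple of $\pi/g$, then $p$ would lie on a reflection plane $P((2j\pm1)\pi/g)$, and reflecting $\gamma_i$ in that plane would produce a second inner component of $\Gamma(t_i)$ collapsing to $p$, contradicting that only $\gamma_i$ is near $p$. So $\theta(p)=2k\pi/g$ is an even multiple, $\gamma_i=\Gin^{(k)}(t_i)$, and $\Gin^{(k)}(t_i)\to\{p\}$.

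The crux is (2)$\Rightarrow$(1): with $p\ne 0$ and $\theta(p)=2k\pi/g$, I must exclude that the tangent flow $\Sigma$ at $(p,T)$ is a horizontal cylinder. The wedge containment of the inner curves shows that near $p$ the only component of $\Gamma(t)$ is $\Gin^{(k)}(t)$ (the others stay a fixed positive distance from $p$), so by Theorems~\ref{graphical} and~\ref{pancake-genus-theorem} the only hole of the projected disk $\pi(M(t)^+)$ near $p$ is the region $D_k(t)$ bounded by $\Gin^{(k)}(t)$. A horizontal‑cylinder tangent flow is smooth, and symmetry of $\Sigma$ in $\{z=0\}$ and in $P(2k\pi/g)$ forces its axis $\ell$ to be radial or tangential at $p$. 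Blowing up at $(p,T)$, $M(t_i)^+$ converges smoothly to $\Sigma^+$, the graph of $\sqrt{2-\dist(\cdot,\ell)^2}$ over the open strip of width $2\sqrt{2}$ about $\ell$; correspondingly $|T-t_i|^{-1/2}(D_k(t_i)-p)$ converges to the complement of that strip. If $\ell$ is radial, then on the side $\theta>2k\pi/g$ the rescaled graph of $M(t_i)^+$ decreases away from $\ell$, which forces $\nu\cdot\Theta>0$ at nearby points of $M(t_i)\cap W(2k\pi/g,(2k+1)\pi/g)$ — against the strict $g$‑wheel inequality $\nu\cdot\Theta<0$ there. If $\ell$ is tangential, then the rescaled hole near $p$ has two pieces, at radii just below and just above $|p|$, so $D_k(t_i)$ meets the line $P(2k\pi/g)\cap\{z=0\}$ in at least two intervals; but $\Gin^{(k)}(t_i)$ is a simple closed curve symmetric about that line, hence meets it transversally in exactly two points, which the blow‑up places at distance $\sqrt{2}\,|T-t_i|^{1/2}$ on either side of $p$, so $D_k(t_i)$ meets the line in the single interval between them — a contradiction. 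Hence $\Sigma$ is a vertical cylinder.

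Given (2)$\Rightarrow$(1), the same dichotomy shows every tangent flow at $(p,T)$ equals the vertical cylinder of radius $\sqrt{2}$ about the vertical line through $p$, so the tangent flow is unique (this also follows from Colding--Minicozzi), $|T-t|^{-1/2}(M(t)-p)$ converges to it as $t\to T$, and $\Gin^{(k)}(t)\to\{p\}$ with rescalings converging to a circle of radius $\sqrt{2}$. Rotating by $2\pi/g$, each $\Gin^{(k+m)}(t)$ collapses to $\Rr_{2\pi/g}^{m}p$ in the same way, so each of these $g$ points is a time‑$T$ singularity whose $\{z=0\}$‑trace is a shrinking round circle, hence (Corollary~\ref{wheel-cylinder-corollary}) a vertical‑cylinder singularity. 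Conversely, by~\eqref{inner-close} any time‑$T$ singular point $q$ satisfies $\dist(q,\Gamma(t)\setminus\Gout(t))\to 0$, so $q$ is the limit of some inner curve and therefore equals one of the $\Rr_{2\pi/g}^{m}p$. This gives (3) and the furthermore, closing the equivalences. I expect the main obstacle to be the tangential‑axis case of (2)$\Rightarrow$(1); the only leverage I see there is the elementary fact that a simple closed curve symmetric about a line meets it transversally in exactly two points.
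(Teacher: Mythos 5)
Your proof is correct and follows essentially the same route as the paper: reduce to horizontal versus vertical via Corollary~\ref{wheel-cylinder-corollary}, rule out a radial axis using the $\nu\cdot\Theta$ inequality and a tangential axis using the structure of $\Gin$ along the radial line (the content of Lemma~\ref{x-axis-lemma}), then propagate by symmetry and conclude with Corollary~\ref{distance-corollary}. The paper phrases the tangential-axis exclusion in terms of the normal directions at the two points of $\Gin\cap X$ rather than the connectedness of the hole $D_k\cap X$, but the two arguments are interchangeable.
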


\begin{proof}
Suppose that~\eqref{v-lemma-1} holds.  
Recall that $p$ lies in the plane $\{z=0\}$.
By symmetry, we can assume that
\[
  p \in \overline{W(-\pi/g,\pi/g)}.
\]
There exist $t_i\to T$ and nearly circular curves $C_i$ in $M(t_i)\cap \{z=0\}$
that enclose $p$ and that collapse to $p$ as $i\to\infty$.
Thus for large $i$, $C_i\ne \Gout(t_i)$, as $\Gout(t_i)$ converges to the simple
closed curve $\Gout(T)$.  Thus $C_i$ is one of the inner components
  of $M(t_i)\cap \{z=0\}$.  None of the inner components of $M(t_i)\cap\{z=0\}$
  intersect $\partial W(-\pi/g,\pi/g)$ (by~\eqref{inner-curves}).
  Therefore $C_i$ is contained in $W(-\pi/g,\pi/g)$,
  and hence $p$ is contained in $W(-\pi/g,\pi/g)$.   Since $\theta(p)$ is a multiple
  of $\pi/g$, in fact $\theta(p)=0$.
Thus we have proved that~\eqref{v-lemma-1} implies~\eqref{v-lemma-2}.

Now suppose that~\eqref{v-lemma-2} holds.
By symmetry, we can assume that $\theta(p)=0$.
Let 
\[
  t\in (-\infty,0) \mapsto |t|^{1/2} \Sigma
\]
be a tangent flow.  Note the unit normal $\nu$ on $\Sigma$
satisfies
\begin{align}
\nu\cdot \ee_3&\ge 0 \quad\text{on $\Sigma^+:=\Sigma\cap\{z>0\}$, and}
\label{nu-e3} 
\\
\nu\cdot \ee_2&\le 0 \quad\text{on $\Sigma\cap\{y>0\}$},
\label{nu-e2}
\end{align}
since $M(t)$ is a pancake and since it is a $g$-surface.
By Corollary~\ref{wheel-cylinder-corollary}, $\Sigma$ is a cylinder.  By symmetry, the axis of the cylinder
is one of the coordinate axes.
If the axis of $\Sigma$ were the $x$-axis, then $\nu$ would point out of the cylinder by~\eqref{nu-e3}
and into the cylinder by~\eqref{nu-e2}, a contradiction. Thus the axis is not the $x$-axis.

If the axis of $\Sigma$ were the $y$-axis,
 then $\Sigma$ would intersect the $x$-axis
orthogonally in the two points $\pm q = (\pm \sqrt2,0,0)$.  By~\eqref{nu-e3}, $\nu$ points
out of the cylinder.  Thus $\nu(-q)= - \ee_1$ and $\nu(q)=\ee_1$.
But $-q$ and $q$ correspond to the points $p_1$ and $p_2$ in Lemma~\ref{x-axis-lemma},
so $\nu(-q)=\ee_1$ and $\nu(q)=-\ee_1$, a contradiction.
(Note that $q$ does not correspond to $p_3$ in Lemma~\ref{x-axis-lemma}, because $\Gout(t)$ stays
away from $p$ as $t\to T$.)

Hence the unique tangent flow at $(p,T)$ is the vertical cylinder.

It follows (see~\eqref{inner-curves}) that
\[
    (M(t)\cap\{z=0\})\setminus \Gout(t)\cap \overline{W(-\pi/g,\pi/g)} 
\]
converges to the point $p$.
By symmetry, 
\[
   (M(t)\cap\{z=0\})\setminus \Gout(t) 
\]
converges to the points $\Rr_{\pi/g}^kp$ (where $k=1,2,\dots,g$).
At each of those points at time $T$, the tangent flow is a vertical cylinder, and there
are no other singularities of the flow (by Corollary~\ref{distance-corollary}).
Thus~\eqref{v-lemma-2} implies~\eqref{v-lemma-3}, as well as the ``furthermore''
assertion.

Finally, \eqref{v-lemma-3} trivially implies~\eqref{v-lemma-1}.
\end{proof}

\begin{lemma}\label{horizontal-lemma}
Suppose that $(p,T)$ is a spacetime singular point.
The following are equivalent:
\begin{enumerate}
\item\label{h-lemma-1} $(p,T)$ has a shrinking horizontal cylinder as a tangent flow.
\item\label{h-lemma-2} $p\ne 0$ and $\theta(p)$ is an odd multiple of $\pi/g$.
\item\label{h-lemma-3}  All tangent flows at time $T$ singularities are horizontal shrinking cylinders.
\end{enumerate}
Furthermore, if these hold, then the axis of the shrinking cylinder
  coming from the singularity at $(p,T)$ is the line $\{cp: c\in\RR\}$.
\end{lemma}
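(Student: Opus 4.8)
The plan is to establish the cycle of implications $(1)\Rightarrow(2)\Rightarrow(3)\Rightarrow(1)$, in the spirit of the proof of Lemma~\ref{vertical-lemma}, and then to identify the axis. Throughout we use two facts: by Theorem~\ref{basic-g-wheel-theorem}, every time-$T$ singular point $(q,T)$ has $z(q)=0$, and if moreover $q\neq 0$ then $\theta(q)$ is a multiple of $\pi/g$; and, since $M$ is special, a singular point $(q,T)$ cannot have $q\in\Gout(T)$, so whenever $q\neq 0$ Corollary~\ref{wheel-cylinder-corollary} applies and every tangent flow there is a horizontal or vertical shrinking cylinder. For $(1)\Rightarrow(2)$: if $p=0$, then by Lemma~\ref{origin-lemma}\eqref{origin-3} every shrinker arising at $(0,T)$ has genus $g-1\ge 2$, hence is not a cylinder, contradicting (1); so $p\neq 0$ and $\theta(p)=m\pi/g$ for some integer $m$. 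If $m$ were even, Lemma~\ref{vertical-lemma} would give that the unique tangent flow at $(p,T)$ is a vertical cylinder, contradicting (1) (a vertical and a horizontal cylinder are different flows); so $m$ is odd, i.e.\ (2) holds. For $(2)\Rightarrow(3)$: from $p\neq 0$ and Lemma~\ref{origin-lemma}\eqref{only-item}, $(0,T)$ is not singular; and if some time-$T$ singular point had a vertical cylinder tangent flow, then by Lemma~\ref{vertical-lemma} all of them would, so $(p,T)$ would, forcing $\theta(p)$ to be an even multiple of $\pi/g$ and contradicting (2). Hence every time-$T$ singular point $(q,T)$ has $q\neq 0$ with $\theta(q)$ an odd multiple of $\pi/g$, and its tangent flows are cylinders which cannot be vertical, hence are horizontal; this is (3). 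Finally $(3)\Rightarrow(1)$ is immediate since $(p,T)$ is a time-$T$ singular point.

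It remains to prove the ``furthermore''. Assume the equivalent conditions, and let $t\mapsto|t|^{1/2}\Sigma$ be a tangent flow at $(p,T)$ with $\Sigma$ a horizontal shrinking cylinder of radius $\sqrt2$. Being a self-shrinker, $\Sigma$ has its axis $L$ passing through the origin; being horizontal, $L$ is horizontal; hence $L\subset\{z=0\}$. Set $\ell_0:=\{cp:c\in\RR\}=P(\theta(p))\cap\{z=0\}$, also a line through the origin. By the elementary geometry of a round cylinder whose axis passes through the origin, any line through the origin that misses the cylinder must coincide with its axis; so to prove $L=\ell_0$ it suffices to show $\Sigma\cap\ell_0=\emptyset$. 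For this I would first show that, in a fixed neighborhood of $p$, the surface $M(t)$ is disjoint from the radial ray $\{cp:c>0\}$ for all $t$ close to $T$. Indeed, since $M$ is special, $\Gout(t)$ stays a definite distance from $p$ as $t\to T$, so near $p$ the plane-section $M(t)\cap\{z=0\}$ consists only of pieces of the inner curves of $M(t)$; and by~\eqref{inner-curves} and its rotates, each inner curve lies strictly inside some wedge $W((k-1)\pi/g,(k+1)\pi/g)$ with $k$ even, whose bounding planes $P((k\pm1)\pi/g)$ are odd multiples of $\pi/g$, so no inner curve meets the ray $\{cp:c>0\}$, whose angle $\theta(p)$ is an odd multiple of $\pi/g$ by (2).

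Then I would pass to the tangent flow at $(p,T)$: parabolic rescaling about $(p,T)$ carries $\ell_0$ to itself, and by the Bamler--Kleiner theory $M(t)$ converges to $\Sigma$ smoothly and with multiplicity one near $p$. If $\Sigma\cap\ell_0\neq\emptyset$, then by the cylinder geometry above $\ell_0$ crosses $\Sigma$ transversally at some point $q_0$, so the rescaled $M(t)$ meets $\ell_0$ transversally near $q_0$ for all $t$ close to $T$; unwinding the rescaling, $M(t)$ meets the radial ray through $p$ arbitrarily close to $p$, contradicting the previous step. Hence $\Sigma\cap\ell_0=\emptyset$, so $L=\ell_0=\{cp:c\in\RR\}$, as asserted. (Alternatively one could first use the reflection symmetry of $\Sigma$ in $P(\theta(p))$ to narrow $L$ down to either $\ell_0$ or the tangential line $\RR\,\Theta(p)$, and then exclude the latter by the same argument, since for a cylinder with axis $\RR\,\Theta(p)$ the line $\ell_0$ is a transversal chord.)

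I expect the main difficulty to be the ``furthermore'', and specifically the exclusion of the tangential axis $\RR\,\Theta(p)$: the reflection symmetry in $P(\theta(p))$ by itself does not distinguish it from the radial line, so one genuinely needs the confinement of the inner curves to their wedges --- i.e.\ the fact that no inner curve meets a between-spoke plane $P(\text{odd}\cdot\pi/g)$ --- to rule it out. By contrast, the equivalences $(1)\Leftrightarrow(2)\Leftrightarrow(3)$ should be routine bookkeeping once Lemma~\ref{vertical-lemma}, Lemma~\ref{origin-lemma}, Corollary~\ref{wheel-cylinder-corollary}, and Theorem~\ref{basic-g-wheel-theorem} are in hand.
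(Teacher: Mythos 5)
Your proposal is correct and follows essentially the same route as the paper: the cycle $(1)\Rightarrow(2)\Rightarrow(3)\Rightarrow(1)$ via Lemmas~\ref{origin-lemma} and~\ref{vertical-lemma}, Theorem~\ref{basic-g-wheel-theorem}, and Corollary~\ref{wheel-cylinder-corollary}, and for the ``furthermore'' the observation that the radial line through $p$ stays away from $M(t)$ near $p$ (since the inner curves are confined to the even wedges and $\Gout(t)$ is bounded away from $p$ by specialness), hence misses $\Sigma$ and must be the axis. Your added transversality remark when passing to the rescaled limit is a reasonable extra precaution that the paper leaves implicit.
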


\begin{proof}
Suppose that~\eqref{h-lemma-1} holds. By Lemma~\ref{origin-lemma}, $p\ne 0$.
By Theorem~\ref{basic-g-wheel-theorem}, $z(p)=0$ and $\theta(p)$ is a multiple of $\pi/g$.  
By Lemma~\ref{vertical-lemma}, $\theta(p)$ is not an even multiple of $\pi/g$.
Hence it is an odd multiple.  Hence~\eqref{h-lemma-1} implies~\eqref{h-lemma-2}.

Now suppose that~\eqref{h-lemma-2}  holds.  By Lemma~\ref{origin-lemma}, $(0,T)$ is not a singular point.
By Corollary~\ref{wheel-cylinder-corollary},
   each tangent flow at $(p,T)$ is a vertical cylinder or a horizontal cylinder.
By Lemma \ref{vertical-lemma}, it cannot be a vertical cylinder.
Thus it is a horizontal cylinder.  
Again, by Lemma~\ref{vertical-lemma}, no tangent flows at time $T$
are vertical shrinking cylinders.  Thus (since $(0,T)$ is not a singular point),
all tangent flows at time $T$ singularities are horizontal shrinking cylinders.
Thus~\eqref{h-lemma-2} implies~\eqref{h-lemma-3}.

Trivially,~\eqref{h-lemma-3} implies~\eqref{h-lemma-1}.

Now suppose that~\eqref{h-lemma-1}, \eqref{h-lemma-2}, and~\eqref{h-lemma-3} hold.
Note that the line $L=\{cp: c\in\RR\}$ only intersects $M(t)$ at the points of $\Gout(t)\cap L$,
and those points are bounded away from $p$ as $t\to T$ (since $M$ is special).
Thus if
\[
  t\in (-\infty,0) \mapsto |t|^{1/2}\Sigma
\]
is a tangent flow at $(p,T)$, then $L$ does not intersect $\Sigma$.
Thus $L$ is the axis of the cylinder $\Sigma$.
\end{proof}

\newcommand{\Qodd}{Q_\textnormal{odd}} 
\newcommand{\Qeven }{Q_\textnormal{even}} 

\begin{definition}
We let $\Qodd$ be the set  consisting
of the origin together with the points $p$ in $\{z=0\}\setminus\{0\}$ such that
 $\theta(p)$ is an odd multiple of $\pi/g$.
We let $\Qeven$ be the set consisting of the origin
together with the points $p$ in $\{z=0\}\setminus\{0\}$ such that $\theta(p)$ is an even multiple of $\pi/g$.
\end{definition}

Thus each of $\Qodd$ and $\Qeven$ is a  union of $g$ horizontal rays emanating from the origin

\begin{theorem}[Trichotomy Theorem]\label{2nd-trichotomy-theorem}
Suppose that $M$ is a  special $g$-wheel and that $M(t)$, $t\in [0,T)$,
is the associated mean curvature flow.
 Then exactly one
of the following happens:
\begin{enumerate}
\item\label{tri-1} All of the tangent flows at time $T$ singularities are horizontal cylinders.
   In this case, all the singular points lie in $\Qodd\setminus\{0\}$.
\item\label{tri-2} All of the tangent flows at time $T$ singularities are vertical cylinders.
  In this case, there are exactly $g$ singularities, and they all lie in $\Qeven\setminus\{0\}$.
\item\label{tri-3} The origin is the only singular point at time $T$, and for each tangent flow
\[ 
   t\in (-\infty,0)\mapsto |t|^{1/2}\Sigma,
\]
the shrinker $\Sigma$ has genus $g-1$, and $\Sigma\cap \{z=0\}$ consists of $g$ simple closed curves.
Furthermore, $\Sigma^+:=\Sigma\cap\{z>0\}$ projects diffeomorphically
    onto the unbounded component of $\{z=0\}\setminus\Sigma$.
\end{enumerate}
\end{theorem}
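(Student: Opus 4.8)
\emph{Proof proposal.} The plan is to run a trichotomy on the spacetime singular set
$S=\{p:(p,T)\text{ is a singular point of the flow}\}$, which is nonempty because the associated flow becomes singular at time $T$. By Theorem~\ref{basic-g-wheel-theorem}, every $p\in S$ lies in $\{z=0\}$, and if $p\ne 0$ then $\theta(p)$ is an integer multiple of $\pi/g$; since $2g$ is even, the parity of the integer $k$ with $\theta(p)=k\pi/g$ is well defined, so each $p\in S\setminus\{0\}$ lies in exactly one of $\Qeven\setminus\{0\}$ and $\Qodd\setminus\{0\}$. Hence exactly one of the following three mutually exclusive and exhaustive situations occurs: (a) $0\in S$; (b) $0\notin S$ and $S$ meets $\Qeven\setminus\{0\}$; (c) $0\notin S$ and $S\subset\Qodd\setminus\{0\}$.

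In case (a), Lemma~\ref{origin-lemma}\eqref{only-item} shows that $(0,T)$ is the only singular point of the flow, and Assertion~\eqref{origin-3} of that lemma shows that for every tangent flow $t\mapsto|t|^{1/2}\Sigma$ at $(0,T)$ the shrinker $\Sigma$ has genus $g-1$, that $\Sigma\cap\{z=0\}$ consists of $g$ simple closed curves, and that $\Sigma^+$ projects diffeomorphically onto the unbounded component of $\{z=0\}\setminus\Sigma$. This is exactly conclusion~\eqref{tri-3}.

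In case (b), I fix $p\in S\cap(\Qeven\setminus\{0\})$ and apply Lemma~\ref{vertical-lemma}: the implication \eqref{v-lemma-2}$\Rightarrow$\eqref{v-lemma-3} gives that every tangent flow at every time-$T$ singularity is a vertical shrinking cylinder, and the ``furthermore'' clause gives that $S$ consists of exactly the $g$ points $\Rr_{2\pi/g}^kp$ ($k=0,\dots,g-1$), all of which lie in $\Qeven\setminus\{0\}$; this is conclusion~\eqref{tri-2}. In case (c), I apply Lemma~\ref{horizontal-lemma} to any $p\in S$: since $p\ne 0$ and $\theta(p)$ is an odd multiple of $\pi/g$, the implication \eqref{h-lemma-2}$\Rightarrow$\eqref{h-lemma-3} gives that every tangent flow at every time-$T$ singularity is a horizontal shrinking cylinder, and $S\subset\Qodd\setminus\{0\}$ by assumption; this is conclusion~\eqref{tri-1}. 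Finally, \eqref{tri-1}, \eqref{tri-2}, and~\eqref{tri-3} are pairwise incompatible: a horizontal cylinder is not a vertical cylinder, and (since $g\ge 3$) a shrinker of genus $g-1\ge 2$ is not a cylinder at all, so no tangent flow can match the descriptions in two of the three cases. Since (a), (b), (c) are exhaustive and exclusive and imply \eqref{tri-3}, \eqref{tri-2}, \eqref{tri-1} respectively, exactly one of the three conclusions holds.

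I do not expect any single step to be a genuine obstacle: the analytic work --- excluding plane and sphere tangent flows, pinning down the cylinder axes, counting the singular points, and computing the genus of $\Sigma$ --- has already been carried out in Theorem~\ref{basic-g-wheel-theorem}, Corollary~\ref{wheel-cylinder-corollary}, Lemma~\ref{origin-lemma}, Lemma~\ref{vertical-lemma}, and Lemma~\ref{horizontal-lemma}. The one point that requires care is the bookkeeping that makes the three cases genuinely exhaustive and mutually exclusive, in particular the remark that the parity of $g\theta(p)/\pi$ is well defined for $p\in S\setminus\{0\}$ because $2g$ is even.
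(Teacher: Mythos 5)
Your proposal is correct and follows essentially the same route as the paper: Theorem~\ref{basic-g-wheel-theorem} sorts the singular points into the three locations (origin, even multiple of $\pi/g$, odd multiple of $\pi/g$), and Lemmas~\ref{origin-lemma}, \ref{vertical-lemma}, and~\ref{horizontal-lemma} then deliver cases \eqref{tri-3}, \eqref{tri-2}, and \eqref{tri-1} respectively. The only difference is that you spell out the exhaustiveness/exclusivity bookkeeping that the paper leaves implicit, which is a harmless (and slightly more careful) elaboration.
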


\begin{proof}
Let $(p,T)$ be a singular point of the flow.
  By Theorem~\ref{basic-g-wheel-theorem}, one of the following holds:
\begin{enumerate}[\upshape(i)]
\item $p\ne 0$ and $\theta(p)$ is an odd multiple of $\pi/g$.
\item $p\ne 0$ and $\theta(p)$ is an even multiple of $\pi/g$.
\item $p=0$.
\end{enumerate}
By Lemmas~\ref{horizontal-lemma}, \ref{vertical-lemma}, and~\ref{origin-lemma},
 these correspond to cases~\eqref{tri-1}, \eqref{tri-2}, and~\eqref{tri-3} of the Trichotomy Theorem.
\end{proof}

Recall that a special $g$-wheel $M$ is called {\bf thin}, {\bf thick},
or {\bf critical} according to whether (1), (2), or (3) holds in Theorem~\ref{2nd-trichotomy-theorem}.

\begin{theorem}\label{open-theorem}
The set of thin $g$-wheels and the set of thick $g$-wheels
are open subsets of the set of all special $g$-wheels.
\end{theorem}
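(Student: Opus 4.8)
The plan is to show that the complement of the set of thin $g$-wheels is closed within the space of special $g$-wheels, and likewise for the thick ones. The natural topology here is that in which a sequence $M_i$ of $C^{1,1}$ $g$-wheels converges to $M$ if they converge in $C^{1,1}$ (or $C^1$ with uniformly bounded curvature); the key compactness input is that mean curvature flows depend continuously on initial data in a suitable sense, combined with the Bamler-Kleiner theory (\S\ref{bamler-kleiner-section}) to guarantee smooth, multiplicity-one limits. So suppose $M_i\to M$ are special $g$-wheels with $M_i$ \emph{not} thin for each $i$; I want to conclude $M$ is not thin. By the Trichotomy Theorem~\ref{2nd-trichotomy-theorem}, each $M_i$ is either thick or critical, so (passing to a subsequence) either all $M_i$ are thick, or all are critical.

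\textbf{The core argument} is a contradiction using the structure of the singular set. Suppose, for contradiction, that $M$ \emph{is} thin. Let $M(t)$, $t\in[0,T)$, be the flow of $M$, and let $M_i(t)$, $t\in[0,T_i)$, be the flow of $M_i$. First I would show $T_i\to T$: this follows because the flows converge smoothly on compact subsets of $\RR^3\times[0,T)$ (by continuous dependence and the fact that $M(t)$ is smooth up to time $T$), so for $t<T$ all $M_i(t)$ exist and stay smooth, giving $\liminf T_i\ge T$; and a curvature-blowup argument near $(0,T)$ — using that $M$ being thin forces all tangent flows at $T$ to be horizontal cylinders, hence genuine singularities — forces $\limsup T_i\le T$. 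Actually the cleanest route: since $M$ is thin, by Lemma~\ref{horizontal-lemma} and the Trichotomy Theorem the singular set of $M(\cdot)$ at time $T$ is a \emph{finite} nonempty subset of $\Qodd\setminus\{0\}$, and near each such point the flow looks like a shrinking horizontal cylinder. Near any point of $\Qeven\setminus\{0\}$ and near the origin, $M(t)$ has bounded curvature and stays smooth through time $T$ and slightly beyond (extend the flow past $T$ locally away from the finite singular set).

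\textbf{The heart of the matter} is to derive a contradiction from $M_i$ not thin. If all $M_i$ are thick, then by Lemma~\ref{vertical-lemma} each $M_i$ develops a vertical-cylinder singularity at a point of $\Qeven$, meaning the inner components of $M_i(t)\cap\{z=0\}$ collapse to points of $\Qeven$. But the inner components of $M(t)\cap\{z=0\}$ collapse instead to points of $\Qodd\setminus\{0\}$ (by Lemma~\ref{horizontal-lemma}), and by smooth convergence $M_i(t)\cap\{z=0\}\to M(t)\cap\{z=0\}$ for each fixed $t<T$; the inner curves of $M_i(t)$ are close to those of $M(t)$, which for $t$ near $T$ are tiny circles near points of $\Qodd$, not $\Qeven$. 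Quantitatively: fix $t_0<T$ close to $T$ so that the inner curves of $M(t_0)$ lie in small balls around $\Qodd\cap\{\text{relevant points}\}$ and in particular are bounded away from $\Qeven$; by convergence the inner curves of $M_i(t_0)$ also lie near $\Qodd$ for $i$ large; but the inner curves of a thick $g$-wheel shrink monotonically (staying in a fixed wedge by~\eqref{inner-curves}) toward a point of $\Qeven$ — contradiction, since the curve's enclosed region cannot migrate between wedges. Similarly if all $M_i$ are critical: then $M_i$ has its \emph{only} singularity at the origin at time $T_i$, with $T_i\to T$; but $M(t)$ has a genuine singularity at a point $q\in\Qodd\setminus\{0\}$ at time $T$, so for $t$ slightly less than $T$, $M(t)$ has curvature $\to\infty$ near $q$, hence $M_i(t)$ has large curvature near $q$ for $i$ large; since $M_i$ is smooth on $[0,T_i)$ and $T_i\to T$, this forces a singularity of $M_i(\cdot)$ near $q$ at a time $\le T_i$, i.e.\ essentially at $T$ — but a critical $g$-wheel has its unique singularity at the origin, contradiction.

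\textbf{The main obstacle} I anticipate is making the ``curvature must blow up near $q$ for the approximating flows'' step fully rigorous — i.e.\ ruling out that $M_i(\cdot)$ stays smooth near $q$ past time $T$ while $M(\cdot)$ is singular there, and conversely that the $M_i$-flows don't become singular prematurely somewhere $M(\cdot)$ is smooth. This is a statement about \emph{upper and lower semicontinuity of the first singular time along sequences of flows}, and it requires care: lower semicontinuity of $T_i$ (flows don't break too early) follows from local smooth convergence on $[0,T)$; the reverse inequality needs that a genuine singularity persists under perturbation, which one gets from the local model (shrinking cylinder is a stable/robust singularity, detectable by the Gaussian density or by a clean blowup), again invoking \S\ref{bamler-kleiner-section} to ensure limits are smooth multiplicity-one. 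I would organize the write-up as: (1) set up the convergence and reduce to the two subsequential cases; (2) establish $T_i\to T$ via local smooth convergence plus robustness of horizontal-cylinder singularities; (3) in the thick case, track the inner curves $M_i(t)\cap\{z=0\}$ using the wedge-confinement~\eqref{inner-curves} and Lemma~\ref{vertical-lemma} to reach a contradiction with their collapsing to $\Qodd$; (4) in the critical case, use persistence of the off-origin singularity of $M$ to contradict Lemma~\ref{origin-lemma}\eqref{only-item} for $M_i$.
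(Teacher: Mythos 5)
Your overall strategy (show the complement of each set is closed under limits of special $g$-wheels) is the same as the paper's, but your implementation runs through the wrong direction of semicontinuity of the singular set, and that is a genuine gap, not a technicality. The paper's proof is direct and short: each non-thin $M_i$ has a singular point $(p_i,T_i)$ with $p_i\in\Qeven$; after passing to a subsequence $p_i\to p\in\Qeven$ (a closed set), and White's local regularity theorem gives that $(p,T)$ is a singular point of $M(\cdot)$. This is the \emph{true} direction of semicontinuity: if the limit flow were regular at $(p,T)$, its Gaussian density ratio at some fixed positive scale would be below $1+\epsilon_0$ there, hence so would that of $M_i(\cdot)$ at $(p_i,T_i)$ for large $i$, forcing $(p_i,T_i)$ to be regular. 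Since a thin wheel has all of its singular points in $\Qodd\setminus\{0\}$, which is disjoint from $\Qeven$, $M$ is not thin. Your argument instead tries to propagate a singularity of the \emph{limit} $M$ at $q\in\Qodd\setminus\{0\}$ down to singularities of the approximants $M_i$ near $q$ (and uses the same idea to force $\limsup T_i\le T$). That implication is false in general: the monotonicity formula bounds the Gaussian density from \emph{above} by density ratios at positive scales, so the fact that $M_i(\cdot)$ has large density ratio at a fixed scale near $(q,T)$ — which is all that convergence to $M(\cdot)$ gives you — does not prevent $M_i(\cdot)$ from being perfectly smooth there; this is exactly the mechanism by which smooth flows limit onto singular ones. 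Likewise, ``$M_i(t)$ has large curvature near $q$ for $t$ close to $T$'' does not produce a singularity of $M_i(\cdot)$ near $q$. Making ``a cylindrical singularity persists under perturbation of the initial surface'' rigorous requires serious additional machinery, and none of it is needed once one argues in the correct direction.

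There is also a geometric error in your thick-case argument. For a \emph{thin} wheel the inner components of $M(t)\cap\{z=0\}$ do \emph{not} collapse to small circles around points of $\Qodd$: it is the spokes (centered on the rays of $\Qodd$) that pinch via horizontal cylinders, while the inner curves retain definite diameter as $t\to T$. Moreover each inner curve is symmetric about a ray of $\Qeven$ and meets it (at the points $p_1,p_2$ of Lemma~\ref{x-axis-lemma}), so it is never ``bounded away from $\Qeven$.'' Repairing this would still leave you needing $T_i\to T$ and a quantitative lower bound on how fast an inner curve of definite diameter can collapse — all of which evaporates if you replace the contradiction scheme by the one-line observation that limits of singular points of the $M_i(\cdot)$ are singular points of $M(\cdot)$ lying in the closed set $\Qeven$ (respectively, for thickness, in the closed set $\Qodd$).
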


\begin{proof}
Let $M_i$ be a sequence of special $g$-wheels that are not thin
and that converge to a special $g$-wheel $M$.  Let $M_i(t)$, $t\in [0,T_i)$,
and $M(t)$, $t\in [0,T)$, be the associated mean curvature flows.

For each $i$, let $(p_i,T_i)$ be a spacetime singular point of the flow $M_i(\cdot)$. 
 Then, by hypothesis,
for each $i$, $p_i\in Q_{\rm odd}$.  
After passing to a subsequence, $p_i$ converges to a point $p\in Q_{\rm odd}$.
By local regularity theory~\cite{white-local}, $(p,T)$ is a singular point of $M(\cdot)$.
Since $p\in Q_{\rm odd}$, $M$ is not thin.
Thus we have proved openess of the set of thin $g$-wheels.

Openness of the set of thick $g$-wheels is proved in exactly the same way.
\end{proof}

\begin{lemma}\label{angenent-lemma}
Suppose that $M$ is a smoothly embedded surface in $\RR^3$ and that
  $A$ is an Angenent torus in $M^c$.
Let $M(t)$, $t\in [0,T_M)$, and $A(t)$, $t\in [0,T_A)$, be the mean curvature flows associated
to $M$ and to $A$.   If $T_A\le T_M$, then $A$ is homotopically trivial in $M^c$.
\end{lemma}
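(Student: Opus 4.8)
The plan is to use the avoidance principle for mean curvature flow together with the fact that Angenent tori shrink to a point. Here is the argument in outline.

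\textbf{Setup.} Let $A(t)$, $t\in[0,T_A)$, be the flow associated to the Angenent torus $A$, so $A(t)$ is a shrinking torus that collapses to a single point $a_0$ as $t\to T_A$; in particular $\diam(A(t))\to 0$. Since $A\subset M^c$ and $M$ is smoothly embedded, the two flows $M(t)$ and $A(t)$ are disjoint at $t=0$, so by the avoidance (comparison) principle for smooth mean curvature flow they remain disjoint for all $t\in[0,\min(T_A,T_M))$. Under the hypothesis $T_A\le T_M$, the flow $M(t)$ is smooth and embedded on the whole interval $[0,T_A]$ (extending continuously to the closed endpoint, since $T_A\le T_M$ means no singularity of $M$ has occurred yet), and $A(t)$ stays in $M(t)^c$ throughout.

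\textbf{Homotopy.} First I would observe that $a_0=\lim_{t\to T_A}A(t)$ lies in $\overline{M^c}$, and in fact in $M^c$: if $a_0\in M=M(0)$ this is impossible since $M(T_A)^c$ is open and contains $A(t)$ for $t$ near $T_A$, forcing $a_0\in M(T_A)^c$; but $M(T_A)$ need not equal $M(0)$, so one should instead argue as follows. Fix $t_*<T_A$ with $A(t_*)$ of very small diameter, contained in a small ball $B$ disjoint from $M(t_*)$. Since $M(\cdot)$ is a continuous (indeed smooth) flow on $[0,T_A]$ and $M(0)=M$, the region $M(s)^c$ varies continuously; running the isotopy $s\mapsto M(s)$ backward from $s=t_*$ to $s=0$ (while keeping $A(t_*)$ fixed inside the shrinking complement region, which remains disjoint from $M(s)$ for $s\in[0,t_*]$) exhibits $A(t_*)$ as isotopic, within $M^c$, to a small embedded torus sitting inside a ball in $M^c$. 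Next, the path $t\in[0,t_*]\mapsto A(t)$ gives a homotopy in $M^c$ (using disjointness of the two flows) from $A=A(0)$ to $A(t_*)$. Composing, $A$ is homotopic in $M^c$ to a torus contained in a ball $B\subset M^c$, hence $A$ is homotopically trivial in $M^c$.

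\textbf{Main obstacle.} The delicate point is controlling the complement region as the barrier surface $M(t)$ itself moves: the naive statement ``$A(t)$ collapses in $M^c$'' needs $M^c$, not $M(t)^c$. The clean way around this is the isotopy argument above — the flow $s\mapsto M(s)$, $s\in[0,T_A]$, is an ambient isotopy of $\RR^3$ (it is a smooth flow of embedded surfaces, with no singularity up to time $T_A$ by the hypothesis $T_A\le T_M$), and under this isotopy the shrinking torus $A(T_A^-)$, which is null-homotopic in $M(t)^c$ for $t$ near $T_A$ because it bounds a small ball there, is carried back to a null-homotopic torus in $M(0)^c=M^c$. One must also invoke the avoidance principle in the precise form valid for smooth (not merely Brakke) flows, which is standard; disjointness at time $0$ is where the hypothesis $A\subset M^c$ enters, and is preserved because the distance between two disjoint compact flows is nondecreasing until the first singular time of either, here $T_A\le T_M$.
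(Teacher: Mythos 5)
Your proposal is correct and follows essentially the same route as the paper: avoidance gives a positive lower bound on $\dist(M(t),A(t))$, the Angenent torus collapses to a point so that $A(t)$ eventually lies in a small ball disjoint from $M(t)$ and is thus null-homotopic in $M(t)^c$, and the isotopy $s\mapsto M(s)$ transfers this back to $M^c$ (a reduction the paper states in one line and you spell out in more detail). Your only slip is claiming smoothness of $M(t)$ on the closed interval $[0,T_A]$, which can fail when $T_A=T_M$, but this is harmless since the argument only uses times $t<T_A$.
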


\begin{proof}
It suffices to show that $A(t)$ is homotopically trivial in $M(t)^c$ for some $t\in [0,T_A)$.
Let 
\[
  \eps = \dist(M,A).
\]
Then
\[
\text{$0 < \eps \le \dist(M(t),A(t))$ for all $t\in [0,T_A)$.}
\]
Let $p$ be the point such that $A(t)\to\{p\}$ as $t\to T_A$.
Choose $t<T_A$ close enough to $T_A$ that $A(t)$ is contained in the ball $B(p,\eps/3)$.
Then $M(t)$ lies outside $B(p,2\eps/3)$.  Since $A(t)$ is homotopically trivial in $B(p,\eps/3)$,
it is homotopically trivial in $M(t)^c$.
\end{proof}

The following two theorems give sufficient conditions for a special $g$-wheel to be thin or thick.

\begin{theorem}[Thinness Condition]\label{thin-theorem}
Suppose that $M$ is a special $g$-wheel.
Suppose also that there is an Angenent torus $A$ in $M^c$ such that the axis of $A$
   is a line in $\{z=0\}$ and such that one of the inner components of $M(\tau) \cap\{z=0\}$
   is homotopically nontrivial in $A^c$.
Then $M$ is thin.
\end{theorem}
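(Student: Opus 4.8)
The plan is to argue by contradiction using the Trichotomy Theorem~\ref{2nd-trichotomy-theorem}: I will assume that $M$ is \emph{not} thin --- so that $M$ is either thick or critical --- and derive a contradiction with the linking hypothesis. Let $A(t)$, $t\in[0,T_A)$, be the mean curvature flow of $A$. Since $A\subset M^c$, the avoidance principle gives $\dist(M(t),A(t))\ge\dist(M,A)=:\eps_0>0$ for all $t\in[0,\min(T,T_A))$. Because $M(t)$ meets $\{z=0\}$ orthogonally and smoothly for $t\in[0,T)$, the intersections $M(t)\cap\{z=0\}$ form a smooth family of curves with a fixed number of components, so the inner component supplied by the hypothesis extends to a smooth family $\Gamma_1(t)$, $t\in[\tau,T)$, of inner components, each disjoint from $A(t)$ (cf.\ Lemma~\ref{x-axis-lemma} and~\eqref{inner-curves}). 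The first point to record is that the homotopy class of $\Gamma_1(t)$ in $A(t)^c$ is independent of $t$: since $A(\cdot)$ is a smooth isotopy of embedded tori on $[0,T_A)$, an ambient isotopy carrying $A(\tau)$ to $A(t)$ pulls $\Gamma_1(t)$ back to a loop in $A(\tau)^c$ depending continuously on $t$, and $A(\tau)$ (a rescaled Angenent torus) has complement homeomorphic to $A^c$. Hence the hypothesis says precisely that $\Gamma_1(t)$ is homotopically nontrivial in $A(t)^c$ for every $t\in[\tau,T)$.

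Next I would use the assumption that $M$ is thick or critical. By Lemma~\ref{vertical-lemma} (thick case) or Corollary~\ref{origin-corollary} (critical case), every inner component of $M(t)\cap\{z=0\}$ collapses to a single point as $t\to T$; in particular $\Gamma_1(t)\to\{q\}$ for some point $q$, so $\diam\Gamma_1(t)\to0$. I then split into two cases according to whether $T_A>T$ or $T_A\le T$. If $T_A>T$, then $A(t)$ converges smoothly as $t\to T$ to an embedded torus of positive size, and for $x\in\Gamma_1(t)$, $y\in A(t)$ the avoidance bound gives $|q-y|\ge|x-y|-|x-q|\ge\eps_0-\diam\Gamma_1(t)$; so once $\diam\Gamma_1(t)<\eps_0/2$ the curve $\Gamma_1(t)$ lies in a ball disjoint from $A(t)$ and is therefore null-homotopic in $A(t)^c$ --- contradicting the previous paragraph. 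If instead $T_A\le T$, then applying Lemma~\ref{angenent-lemma} to the flows restarted at time $\tau$ shows $A(\tau)$ is homotopically trivial in $M(\tau)^c$; since $\Gamma_1(\tau)\subset M(\tau)$, a null-homotopy of $A(\tau)$ in $M(\tau)^c$ makes both a meridian and a longitude of $A(\tau)$ null-homotopic in $\RR^3\setminus\Gamma_1(\tau)$, forcing the linking number of $\Gamma_1(\tau)$ with a core circle of $A(\tau)$, and with a meridian disk of $A(\tau)$, to vanish; but the homotopy class of $\Gamma_1(\tau)$ in $A(\tau)^c$ is detected by exactly one of these linking numbers, according to which component of $A(\tau)^c$ contains $\Gamma_1(\tau)$, so $\Gamma_1(\tau)$ is null-homotopic in $A(\tau)^c$ --- again a contradiction. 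In either case $M$ cannot be thick or critical, so $M$ is thin.

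The analytic input (the Trichotomy Theorem, Lemma~\ref{vertical-lemma}, Corollary~\ref{origin-corollary}, Lemma~\ref{angenent-lemma}, and the avoidance principle) is already in hand, so I do not expect a serious analytic obstacle. The part that requires care is topological: making precise the flow-invariance of the homotopy class of $\Gamma_1(t)$ in $A(t)^c$, and --- the subtlest step --- the linking-duality conversion in the case $T_A\le T$, where ``$A$ homotopically trivial in $M^c$'' must be turned, using only $\Gamma_1\subset M$, into ``$\Gamma_1$ homotopically trivial in $A^c$''. Both are standard once phrased carefully (e.g.\ via linking numbers of $\Gamma_1$ with the meridian and core of $A$), but they are the steps I would write out in full detail.
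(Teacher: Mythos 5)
Your proposal is correct and follows essentially the same route as the paper: the Angenent torus as an avoidance barrier, persistence of the linking of the inner component with $A(t)$, and the collapse of inner components in the thick and critical cases (Lemma~\ref{vertical-lemma} and Corollary~\ref{origin-corollary}) to force a contradiction. The only differences are organizational --- the paper rules out $T_A\le T$ up front via (the contrapositive of) Lemma~\ref{angenent-lemma} and then gets the contradiction from $\diam C(t)\ge d_A(T)>0$ because $C(t)$ must enclose a component of $A(t)\cap\{z=0\}$, whereas you case-split and use the small-ball/linking-number arguments --- and your explicit linking-duality step is in fact the justification the paper leaves implicit when it invokes Lemma~\ref{angenent-lemma}.
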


\begin{proof}
Let $M(t)$, $t\in [0,T)$, and 
Let $A(t)$, $t \in [0,T_A)$, be the mean curvature flows associated to $M$ and to $A$.
By Lemma~\ref{angenent-lemma}, $T<T_A$.   Let $C(t)$ be an inner component of $M(t)\cap\{z=0\}$
such that $C(0)$ is homotopically nontrivial in $A(0)^c$.
Thus $C(t)$ is homotopically nontrivial in $A(t)^c$ for all $t\in [0,T)$.
Thus $C(t)$ enclosed one of the components of $A(t)\cap \{z=0\}$, so
\[
   \diam C(t) \ge d_A(t) \ge d_A(T) > 0,
\]
where $d_A(t)$ is the diameter of either of the two components of $A(t)\cap\{z=0\}$.
(The two components are congruent.)
Hence $C(t)$ does not shrink to a point as $t\to T$.
Thus $M$ is neither thick nor critical by Lemma~\ref{vertical-lemma}
 and Corollary~\ref{origin-corollary}.  Hence $M$ is thin.
\end{proof}

\begin{definition}
If $A$ is an Angenent torus centered at a point in $\{z=0\}$ and with a vertical axis,
then $\{z=0\}\setminus A$ has three components: an unbounded component,
an annulus, and a disk.  We let $D_A$ denote the disk.
\end{definition}

\begin{theorem}[Thickness Condition]\label{thick-theorem}
Suppose that $M$ is a special $g$-wheel.
Suppose also that there is an Angenent torus $A$ such that $A$ has vertical axis,
  $A$ is centered at a point $(x,0,0)$ with $x>0$, $A$ is in $W(-\pi/g,\pi/g)$, 
  $A$ is in the interior of the region $K$ bounded by $M$, and
  $D_{A}$ contains the component of $M\cap\{z=0\}$ lying in $W(-\pi/g, \pi/g)$.
Then $M$ is thick. 
\end{theorem}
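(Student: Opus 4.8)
\noindent\emph{Strategy and setup.} By the Trichotomy Theorem~\ref{2nd-trichotomy-theorem} it suffices to show that $M$ is neither thin nor critical. The plan is to compare the flow $M(t)$, $t\in[0,T)$, with the flow $A(t)$, $t\in[0,T_A)$, of the Angenent torus $A$, using that $A$ lies inside both $K$ and the wedge $W(-\pi/g,\pi/g)$ while $\Gin(M)$ lies in the disk $D_A$. First I would record the standard facts about $A(t)$. Mean curvature flow preserves the rotational symmetry of $A$ about its vertical axis and the reflection symmetry in $\{z=0\}$, so each $A(t)$ is a surface of revolution about the vertical line through $(x,0,0)$, symmetric in $\{z=0\}$; hence $A(t)\cap\{z=0\}$ consists of two circles about $(x,0,0)$ of radii $\rho_1(t)<\rho_2(t)$, with $\rho_2(t)\to0$ as $t\to T_A$. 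The avoidance principle applied to the stationary planes $P(\pm\pi/g)$ gives $A(t)\subset W(-\pi/g,\pi/g)$ for $t<T_A$, whence $\rho_2(t)<\dist((x,0,0),P(\pi/g))=x\sin(\pi/g)$; and since $A\subset\interior(K)$, the avoidance principle gives $A(t)\subset\interior(K(t))$ for $t<\min(T,T_A)$, where $K(t)$ is the region bounded by the strict $g$-wheel $M(t)$ (Theorem~\ref{basic-g-wheel-theorem}).

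\emph{Trapping.} The heart of the argument is the claim that $\Gin(t)\subset D_A(t)$ for every $t\in[0,\min(T,T_A))$, where $D_A(t)$ is the disk of radius $\rho_1(t)$ about $(x,0,0)$ in $\{z=0\}$. Since $\Gin(t)\subset M(t)$ is disjoint from $A(t)$, the connected curve $\Gin(t)$ lies in one of the three components of $\{z=0\}\setminus(A(t)\cap\{z=0\})$; the component containing it depends locally constantly on $t$, and at $t=0$ it is $D_A(0)=D_A$ by hypothesis, so by connectedness of the time interval $\Gin(t)$ stays in $D_A(t)$. Together with $\rho_1(t)<\rho_2(t)<x\sin(\pi/g)$ this confines $\Gin(t)$ to the fixed compact set $\overline{B}((x,0,0),x\sin(\pi/g))\cap\{z=0\}$, which is disjoint from the $z$-axis and meets $P(\pm\pi/g)$ only on its boundary (so $\Gin(t)$, lying in the open ball, misses $P(\pm\pi/g)$). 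Next I would deduce $T_A\ge T$: if $T_A<T$, then $M(T_A)$ is a smooth strict $g$-wheel, so $\Gin(M(T_A))=\lim_{t\to T_A}\Gin(t)$ would be a simple closed curve of positive diameter, contradicting $\diam\Gin(t)\le2\rho_1(t)\to0$. Hence $\min(T,T_A)=T$, so the confinement above holds on all of $[0,T)$, and $\rho^*:=\sup_{t\in[0,T)}\rho_1(t)<x\sin(\pi/g)$.

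\emph{Ruling out thin and critical.} If $M$ were critical, then $(0,T)$ would be a singular point, so by Corollary~\ref{origin-corollary} the curve $\Gin(t)$ would converge to $0$; but $\dist(\Gin(t),0)\ge x-\rho^*>0$, a contradiction. If $M$ were thin, then by Lemma~\ref{horizontal-lemma} every time-$T$ singular point lies in $\Qodd\setminus\{0\}$; since the odd multiples of $\pi/g$ form a single $\Rr_{2\pi/g}$-orbit and the flow is $\Rr_{2\pi/g}$-equivariant, there is a singular point $q$ with $z(q)=0$, $q\ne0$, and $\theta(q)=\pi/g$. As $M$ is a pancake and (by specialness) $q\notin\Gout(T)$, Corollary~\ref{distance-corollary} together with the fact that $\Gout(t)$ stays away from $q$ gives $\dist(q,\bigcup_k\Gin_k(t))\to0$, where $\Gin_k(t):=\Rr_{2\pi/g}^k\,\Gin(t)$; the reflection symmetry in $P(\pi/g)$ (which fixes $q$ and interchanges $\Gin_0(t)$ and $\Gin_1(t)$), together with the confinement of each $\Gin_k(t)$ to its own wedge $W((2k-1)\pi/g,(2k+1)\pi/g)$, then forces $\dist(q,\Gin(t))\to0$. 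But $q$ has distance $\ge x\sin(\pi/g)$ from $(x,0,0)$, so $\dist(q,\Gin(t))\ge x\sin(\pi/g)-\rho^*>0$, a contradiction. By the Trichotomy Theorem, $M$ is thick.

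\emph{Main obstacle.} The delicate point is the thin case --- pinning down that it is the inner curve $\Gin(t)$, rather than $\Gout(t)$ or an inner curve lying in a nonadjacent wedge, that must approach the singular point $q$ on the wall $P(\pi/g)$. This step rests on the pancake distance estimate (Corollary~\ref{distance-corollary}), the specialness of $M$, the confinement of the inner components to their wedges (as in Lemma~\ref{x-axis-lemma}), and the reflection symmetry in $P(\pi/g)$; with that in hand, the trapping of $\Gin(t)$ inside the shrinking disk $D_A(t)$ --- centered on the positive $x$-axis with radius $<x\sin(\pi/g)$, hence uniformly away from both the $z$-axis and $P(\pi/g)$ --- delivers all three contradictions at once.
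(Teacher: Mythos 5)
Your proof is correct and follows essentially the same route as the paper: trap $\Gin(t)$ in the shrinking disk $D_{A(t)}\subset D_A$, which keeps the inner curve uniformly away from the origin and from the walls $P(\pm\pi/g)$, so no time-$T$ singular point can lie at $0$ or at an odd multiple of $\pi/g$, and the Trichotomy Theorem forces thickness. The only local difference is how the comparison of $T$ with $T_A$ is handled --- you rule out $T_A<T$ by continuity of $\Gin(t)$ at a smooth time together with $\diam\Gin(t)\le 2\rho_1(t)\to 0$, whereas the paper invokes Lemma~\ref{angenent-lemma} after observing that $A$ links a vertical line through the hole of $\Gin(0)$ and is therefore homotopically nontrivial in $M^c$; both arguments are valid.
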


\begin{proof}
Let $M(t)$, $t\in [0,T)$ and $A(t)$, $t\in [0,T_A)$, be
the mean curvature flows associated to $M$ and to $A$.
Let $C(t)$ be component of $M\cap\{z=0\}$ in $W(-\pi/g, \pi/g)$.
By hypothesis, $C(0)$ is contained in $D_{A}=D_{A(0)}$. 
Let $V$ be a vertical line that passes through the region in $\{z=0\}$ enclosed by $C(0)$.
Then $V$ is contained in $M^c$.  Since $A$ is not homotopically trivial in $V^c$, 
it is not homotopically trivial in $M^c$.   Thus $T_A>T$ by Lemma~\ref{angenent-lemma}.

Since $C(0)$ is contained in $D_{A(0)}$, it
follows that $C(t)$ is contained in $D_{A(t)}$, and therefore that
\begin{equation}\label{Gin-inside}
   C(t)\subset D_{A(0)}=D_A.
\end{equation}
for all $t$.  Let $p$ be a point such that $(p,T)$
is a singular point of the flow $M(\cdot)$.  By symmetry, we can
assume that $p$ is the closure of $W(-\pi/g, \pi/g)$.  
For each $t$, by symmetry, $\Gin(t)$ is the inner component of $M(t)\cap\{z=0\}$
to which $p$ is closest.  Thus 
\[
   \dist(p,\Gin(t)) \to 0 \quad\text{as $t\to T$}.
\]
Thus, by~\eqref{Gin-inside}, 
\[
   p \in \overline{D_A} \subset W(-\pi/g,\pi/g).
\]
Hence $p\ne 0$ and $\theta(p)$ is not an odd multiple of $\pi/g$,
so $M$ is thick according to Theorem~\ref{2nd-trichotomy-theorem}.
\end{proof}

\section{Existence}\label{existence-section}

Let $\MM_g(R)$ be the collection of all $M(\eps,s;R)$ in Example~\ref{example}.

\begin{lemma}\label{special-lemma}
There is a $\tilde R$ with the following property.  If $R\ge \tilde R$ 
and if $M\in \MM_g(R)$, then $M$ is special.
\end{lemma}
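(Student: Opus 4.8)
The plan is to show that for $R$ large, every $M \in \MM_g(R)$ is special, i.e., that the outermost curve $\Gout(t)$ of the associated flow $M(t)$ never develops a singularity at the first singular time $T$. The idea is to trap $\Gout(t)$ between two nested round circles (in the plane $\{z=0\}$) whose evolutions under curve shortening flow we understand explicitly, and to show that the flow becomes singular in the interior (near the spokes) strictly before the trapping circles collapse. Since $M$ is a $g$-wheel, it meets $\{z=0\}$ orthogonally, so $M(t)\cap\{z=0\}$ evolves by curve shortening flow in the plane; in particular each component, and $\Gout(t)$ in particular, obeys the curve shortening flow equation, and round circles in $\{z=0\}$ are avoided/barriers for it.

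First I would recall that, for $M = M(\eps,s;R)$, the set $K = K_g$ of Example~\ref{example} has its torus part $\TT = \TT_R$ with $\TT\cap\{z=0\}$ an annulus of inner radius $1$ and outer radius $1+2R$; hence $\Gout(M)$ lies in the annulus $\{1+2R \le |(x,y)| \le 1+2R+\eps\}$ roughly, and in particular $\Gout(0)$ is sandwiched between the circle of radius $1+2R$ and the circle of radius $1+2R+2\eps$ (using $\eta_g \ge \eps$ and $s<\eps$, one gets crude but fixed bounds). Meanwhile the inner curves of $M\cap\{z=0\}$ live near the unit circle, at distance $O(1)$ from the origin, independent of $R$. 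By Corollary~\ref{distance-corollary} (applied to the pancake $M$) together with the classical fact that the flow $M(t)$ does become singular at some finite time $T = T(M)$, I know there is a singular point $(p,T)$; the goal is to show $p \notin \Gout(T)$ for $R$ large. Key step: bound $T$ from above by a constant independent of $R$. This follows because the inner spoke geometry of $M$ is independent of $R$ (for fixed $g$, $\eps$, $s$), so the inner curves $C(t)$ of $M(t)\cap\{z=0\}$ must collapse by some time $T_1$ depending only on $g$; more carefully, one can compare with a shrinking-circle barrier enclosing one of the inner components, or simply invoke that $M$ is contained in a fixed ball of $R$-independent ``thin'' geometry near the hub — alternatively, use that $T_M$ is at most the extinction time, bounded by the diameter; but since the diameter is $\sim R$, the cleaner route is the inner-barrier one. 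So the actual argument: place a shrinking circle $\sigma(t)$ in $\{z=0\}$ of initial radius $\rho_0$ (depending only on $g,\eps$) linking one inner component of $M(0)\cap\{z=0\}$ and disjoint from it, lying inside $K^c$ near the hub; it collapses at time $\rho_0^2/2$, forcing a singularity of $M(\cdot)$ by time $\rho_0^2/2$ (the inner curve it is linked with, or rather that links it, cannot survive). Hence $T \le \rho_0^2/2 =: T_1$, independent of $R$.

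Next I would show $\Gout(t)$ survives past $T_1$ for $R$ large. Since $\Gout(0)$ encloses the disk of radius $1+2R$ in $\{z=0\}$, and $\Gout(t)$ evolves by curve shortening flow, the inscribed-circle (avoidance) principle gives $\Gout(t)$ encloses a circle of radius at least $\sqrt{(1+2R)^2 - 2t}$ for all $t$ before its extinction; in particular for $t \le T_1$ and $R$ large, $\Gout(t)$ stays at distance $\ge \sqrt{(1+2R)^2 - 2T_1} \to \infty$ from the origin, so $\Gout(t)$ does not shrink to a point and stays uniformly away from where the inner singularities occur. By Corollary~\ref{distance-corollary}, if $(p,T)$ were singular with $p \in \Gout(T)$, then $\dist(p, M(t)\cap\{z=0\}) \to 0$, forcing $p$ to be a limit of points on $\Gout(t)$ (since the inner curves are far away), but $\Gout(t)$ has no singularity before $T_1 \ge T$ — more precisely one needs that $\Gout(t)$ is smoothly embedded up to time $T$ with curvature bounds, which follows from the inscribed circle bound plus Grayson-type arguments, or simply from the fact that, being a round-ish curve of radius $\sim R$ evolving for time $\le T_1 \ll R^2$, it stays convex and smooth. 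Actually the cleanest formulation: the points of $\Gout(T)$ in the sense of Definition~\ref{Gout-definition} all lie at distance $\ge \sqrt{(1+2R)^2 - 2T}$ from $0$; choosing $R$ so that this exceeds the (fixed) radius enclosing all inner curves for all $t$, and also large enough that by the Bamler–Kleiner theory and local regularity any tangent flow at such a point would have to be a smooth multiplicity-one shrinker intersecting $\{z=0\}$, which by Corollary~\ref{distance-corollary} would then be within $O(\sqrt{T-t})$ of an inner curve — impossible since inner curves are a fixed distance away while $\Gout$ is at distance $\to\infty$. Hence no point of $\Gout(T)$ is singular, so $M$ is special.

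The main obstacle I expect is making rigorous the claim that $\Gout(t)$ ``stays smooth and round'' up to time $T$: a priori a curve shortening flow can develop a singularity before its enclosed inscribed circle vanishes if it is very non-convex. I would address this by noting that $\Gout(0)$, for $M(\eps,s;R)$, is within Hausdorff-distance $O(\eps + s)$ of the round circle of radius $1+2R$ and is a graph over it with small $C^2$ norm (in fact $\Gout(M)$ for these explicit $M$ is a smooth convex curve when $\eps,s$ are small relative to $R$, since it is a smoothing of the outer boundary of a round annulus), so it is strictly convex; strictly convex curve shortening flows stay strictly convex and smooth until they collapse to a point (Gage–Hamilton), and the collapse time is $\sim (1+2R)^2/2 \gg T_1$. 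This reduces everything to: (i) the $R$-independent upper bound $T \le T_1$ on the first singular time, and (ii) the $R$-independent lower bound on $\dist(0,\Gout(t))$ and the convexity/smoothness of $\Gout(t)$ for $t\le T_1$, both of which hold once $R \ge \tilde R$ for a suitable $\tilde R = \tilde R(g)$ (using also $\eps < \eta_g$ and $s<\eps$, which bound the ``thickness'' of the smoothing uniformly).
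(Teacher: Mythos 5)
Your high-level plan (a uniform-in-$R$ upper bound on the singular time, plus localization of $\Gout(t)$ far from the hub, plus regularity of the surface near $\Gout$) matches the shape of the paper's argument, but two of your key steps rest on a false premise and a misreading. The false premise is that $M(t)\cap\{z=0\}$ evolves by curve shortening flow. It does not: at a point of $\Gamma(t)=M(t)\cap\{z=0\}$ the curve moves with normal speed equal to the full mean curvature $H=\kappa_1+\kappa_2$ of the surface, where $\kappa_1$ is the planar curvature of $\Gamma(t)$ and $\kappa_2$ is the principal curvature in the vertical direction; the slice therefore does not obey an autonomous planar equation, and neither the inscribed-circle avoidance for CSF nor Gage--Hamilton applies to it. This breaks your upper bound on $T$ (a planar circle is in any case not a barrier for a surface flow in $\RR^3$, and a circle enclosed by $\Gin$ collapsing to a point imposes no constraint on $\Gin(t)$ at all), and it breaks your claim that $\Gout(t)$ ``stays convex and smooth.'' The paper gets the uniform bound $T<T_A$ from a genuinely two-dimensional barrier: an Angenent torus encircling the $z$-axis inside the region bounded by $M$, which is homotopically nontrivial there (Lemma~\ref{angenent-lemma}); some such device is unavoidable, since $K(\eps)^c$ has no bounded component in $\RR^3$ in which to place a shrinking sphere.

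The more serious gap is the last step: even granting that $\Gout(t)$ stays at distance $\sim R$ from the origin and from the inner curves, you have not ruled out a singularity of the \emph{surface} at a point of $\Gout(T)$ (e.g.\ a neckpinch with axis tangent to $\Gout$). Your appeal to Corollary~\ref{distance-corollary} is a misreading: it says a singular point $(p,T)$ satisfies $\dist(p,M(t)\cap\{z=0\})\to 0$, and this is perfectly consistent with $p$ being a limit of points of $\Gout(t)$ itself; the sharper statement \eqref{inner-close}, which forces proximity to the \emph{inner} curves, is derived in \S\ref{special-wheel-section} \emph{assuming} specialness, so invoking it here would be circular. What is actually needed -- and what the paper supplies -- is a local regularity argument: if $(p_i,T_i)$ were singular with $p_i\in\Gout(T_i)$ for $R_i\to\infty$, then by the avoidance estimate $\dist(p_i,M_i)^2\le 4T_i$ with $T_i<T_A$ bounded, one can parabolically rescale about $(p_i,T_i)$ by the bounded factor $T_i^{-1/2}$; since near $\Gout(M_i)$ the initial surface is a piece of a torus of tube radius $\approx R_i\to\infty$, the rescaled initial surfaces converge to a multiplicity-one plane, the limit Brakke flow is static, and White's local regularity theorem \cite{white-local} then contradicts the singularity at the rescaled point $(0,1)$. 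Your proposal contains no substitute for this step, so as written it does not prove the lemma.
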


This is a fairly direct consequence of local regularity theory.
  For the reader's convenience,
we include a proof.

\begin{proof}
First, note if $M\in \MM_g(R)$, if $M(t)$, $t\in [0,T_M)$ is the associate
mean curvature flow, and $p\in \{z=0\}$ and if $\dist(p,0)> 1+R$, then the point in $M$ closest
to $p$ is in $\Gout(M)$.  
Recall that if $S\subset M^c$, then
\begin{equation}\label{distance-bound}
   \dist(S, M(t))^2 \ge \dist(S,M)^2 - 4t
\end{equation}
for all $t\ge 0$.
Let
\[
 C_{1+R} = \{(x,y,0): (x^2+y^2)^{1/2}= 1+ R\}.
\]
Since $\dist(M(0),C_{1+R}) > R$, 
\[
\dist(M(t),C_{1+R})^2 
 \ge    \dist(C_{1+R},M(0))^2 - 4t = R^2 - 4t,
\]
so
\begin{align*}
\dist(M(t), C_{1+R}) 
&\ge 
R(1 - 4R^{-2}t)^{1/2}  \\
&\ge
R - 2R^{-1}t 
\end{align*}
Consequently,
\begin{equation}\label{closest}
\begin{gathered}
\text{If $t< R^2/2$ and $p\in\Gout(t)$, then}  \\
\text{the point in $M$ closest to $p$ is in $\Gout(M)$.}
\end{gathered}
\end{equation}

Suppose the lemma is false.
Then there exist $R_i\to \infty$,  $M_i\in\MM(R_i)$ with associated mean curvature flows $M_i(t)$, $t\in [0,T_i)$,
and singular points $(p_i,T_i)$ with $p_i\in {\Gout}_i(T_i)$.
By~\eqref{distance-bound} (with $S=\{p_i\}$),
\begin{equation}\label{distance-i-bound}
   \dist(p_i,M_i)^2 \le 4 T_i.
\end{equation}

Let $A$ be the Angenent torus centered at $0$ and with axis $Z$ such that $\dist(0,A)=1$.
Let $A(t)$, $t\in [0,T_A)$ be the associated mean curvature flow.
We can assume that the $R_i$ are large enough that $A$ is contained in the region
bounded by $M_i$.   Thus
\begin{equation}\label{Ti-Ta}
   T_i < T_A
\end{equation}
by Lemma~\ref{angenent-lemma}.

Let $q_i$ be  point in $M_i$ closest to $p_i$.  By~\eqref{closest} and~\eqref{Ti-Ta},
\begin{equation}\label{qi-gout}
   q_i \in \Gout(M_i)
\end{equation}
for all sufficiently large $i$.   By~\eqref{distance-i-bound},
\begin{equation}\label{pi-qi-close}
   |q_i-p_i|^2 \le 4T_i.
\end{equation}

Translate the flow $M_i(\cdot)$ spatially by $-p_i$ and the dilate parabolically
by 
\[
  (q,t)\mapsto  (q/ T_i^{1/2},  t/T_i)
\]
to get a flow $M_i'(t)$, $t\in [0,1)$ such that the spacetime point $(0,1)\in \RR^3\times\RR$
is a singular point.  Let $q_i'=(T_i)^{-1/2}(q_i-p_i)$ be the point in $M_i'(0)$ corresponding
to  $q_i\in M_i(0)$.   By~\eqref{pi-qi-close},
\[
   |q_i'|\le 2.
\]
After passing to a subsequence, the $q_i'$ converge to a point $q'$ and $M_i'(0)$
converges smoothly and with multiplicity one to a plane $P$ through $q'$.

By passing to a subsequence, the $M_i'(\cdot)$ converge as Brakke flows
to an integral Brakke flow $M'(\cdot)$.  Since $M'(0)$ is a multiplicity $1$ plane and since
   the spacetime point $(0,1)\in \RR^3\times \RR$ is in the support of the flow,
   $M'(t)$ is $P$ with multiplicity one for all $t\in [0,1)$.   
   By the local regularity theory~\cite{white-local}, the convergence $M_i'(\cdot)$ to $M'(\cdot)$
   is uniformly smooth on compact subsets of $\RR^3\times (0,1]$.  But that is a contradiction,
   since $(0,1)$ is a singular point of $M_i'(\cdot)$.
\end{proof}

\begin{theorem}
For each $g\ge 3$, there exists a critical wheel $M_g$.
\end{theorem}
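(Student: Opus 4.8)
The plan is to exhibit, for each fixed $g\ge 3$, a connected one-parameter family of special $g$-wheels inside $\MM_g(R)$ (for a single large $R$) that contains both a thin member and a thick member, and then to conclude by a connectedness argument using the openness theorem. First I would fix $R\ge\tilde R$, where $\tilde R$ is the constant from Lemma~\ref{special-lemma}, so that every $M\in\MM_g(R)$ is automatically special; this is what lets us run the trichotomy machinery. The family in question is
\[
   \{\,M(\eps,s;R): \eps\in(0,\eta),\ s\in(0,\eps)\,\}
\]
from Example~\ref{example}, which is path-connected (indeed the parameter region $\{(\eps,s):0<\eps<\eta,\ 0<s<\eps\}$ is convex), and along which $M(\eps,s;R)$ depends continuously (in $C^{1,1}$, hence in the topology used for the openness statement) on $(\eps,s)$.

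Second I would verify the two endpoint cases. For $\eps>0$ sufficiently small, $M(\eps,s;R)$ is thin: here one invokes Theorem~\ref{thin-theorem} (the Thinness Condition) by placing an Angenent torus $A$ with horizontal axis lying in $\{z=0\}$ so that some inner component of $M(\tau)\cap\{z=0\}$ links $A$; geometrically, when the spokes are very thin the inner curves of $M\cap\{z=0\}$ are long and skinny and surround a slender torus. For $\eps$ close to $\eta$ and $s$ sufficiently small, $M(\eps,s;R)$ is thick: here one invokes Theorem~\ref{thick-theorem} (the Thickness Condition) by finding an Angenent torus $A$ with vertical axis, centered at a point $(x,0,0)$ with $x>0$, sitting inside $W(-\pi/g,\pi/g)$, contained in the solid region bounded by $M$, with $D_A$ containing $\Gin(M)$; geometrically, when the spokes are fat the component of $M\cap\{z=0\}$ in that wedge is a small nearly-round curve that fits inside the disk cut out by a vertical Angenent torus. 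Both constructions reduce to elementary comparisons of the explicit sets $K(\eps)$ and $M(\eps,s;R)$ with a fixed Angenent torus (rescaled appropriately), using only that Angenent tori of every size exist and that $\dist(K,\cdot)$-level sets behave as described in Example~\ref{example}.

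Third I would run the connectedness argument. By Theorem~\ref{open-theorem}, the set of thin special $g$-wheels and the set of thick special $g$-wheels are open in the space of special $g$-wheels, hence their preimages under the continuous map $(\eps,s)\mapsto M(\eps,s;R)$ are disjoint open subsets of the connected parameter region. Since neither is empty (by the endpoint analysis) and they cannot cover the region (connectedness), there is a parameter $(\eps_0,s_0)$ for which $M_g:=M(\eps_0,s_0;R)$ is neither thin nor thick. By the Trichotomy Theorem~\ref{2nd-trichotomy-theorem}, $M_g$ is critical. This is the desired critical wheel.

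The main obstacle I expect is the endpoint analysis, specifically arranging the Angenent tori with the precise linking/containment properties demanded by Theorems~\ref{thin-theorem} and~\ref{thick-theorem}, uniformly for a range of parameters and for the single fixed large $R$ coming from Lemma~\ref{special-lemma}. One must check that $R$ can be taken large enough simultaneously for specialness \emph{and} for both the thin and thick witnesses to exist, and that the Angenent torus used in the thin case genuinely links an inner component rather than $\Gout$ (which is where the hypothesis $g\ge 3$, forcing at least three inner curves arranged symmetrically, is convenient). Everything else is the soft topology of a continuous family on a connected domain, plus the already-established openness and trichotomy results.
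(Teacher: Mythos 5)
Your proposal is correct and follows essentially the same route as the paper: fix $R\ge\tilde R$ from Lemma~\ref{special-lemma} so every member of $\MM_g(R)$ is special, verify thinness for small $\eps$ via Theorem~\ref{thin-theorem} and thickness for $\eps$ near $\eta$ via Theorem~\ref{thick-theorem}, and conclude by openness (Theorem~\ref{open-theorem}) plus connectedness of the parameter family and the Trichotomy Theorem. The paper is no more detailed than you are about placing the Angenent tori, so your flagged ``obstacle'' is not treated more carefully there either.
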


\begin{proof}
Choose $R$ large enough (see Lemma~\ref{special-lemma})
that all the surfaces in $\MM_g(R)$
are special $g$-wheels.
Let $G_\textnormal{thin}$ be the set of thin $g$-wheels in $\MM_g(R)$
and let $G_\textnormal{thick}$ be the set of thick
 $g$-wheels in $\MM_g(R)$.
If $\eps>0$ is sufficiently small, then $M(\eps,s)$ is thin by the thinness condition, 
 Theorem~\ref{thin-theorem}.
If $\eps$ is sufficiently close to $\eta$ and $s>0$ is sufficiently small, then
$M(\eps,s)$ is thick by the thickness condition, Theorem~\ref{thick-theorem}.
By Theorem~\ref{open-theorem},
   $G_\textnormal{thin}$ and $G_\textnormal{thick}$ are open.
Since $G_\textnormal{thin}$ and $G_\textnormal{thick}$ 
are nonempty, disjoint, open subsets of $\MM_g(R)$
and since $\MM_g(R)$ is connected, there must be a critical $g$-wheel $M_g$
in $\MM_g(R)$ by the Trichotomy Theorem~\ref{trichotomy-theorem}.
\end{proof}

\begin{remark}\label{g2-remark}
The case $g=2$ is somewhat different, because (in that case)
it is possible to have a shrinking horizontal cylinder singularity at the origin.
(A horizontal cylinder can have the symmetries of a $g$-wheel if $g=2$, but not
if $g>2$.)  For that reason, Lemma~\ref{horizontal-lemma}
and Theorem~\ref{2nd-trichotomy-theorem} are not true as stated for $g=2$.
The trichotomy theorem as stated in \S\ref{overview-section}
 (Theorem~\ref{trichotomy-theorem}) remains true for $g=2$, and if thick, thin, and critical are defined
 according to Theorem~\ref{trichotomy-theorem} rather than according to 
   Theorem~\ref{2nd-trichotomy-theorem}, 
 then critical $2$-wheels do exist, but somewhat different proofs are required.
 \end{remark}

\section{Behavior as $g\to\infty$}\label{behavior-section}

In this section $\Ss_g$ denotes the family of all smooth shrinkers $\Sigma$ such that
\[
   t\in (-\infty,0)\mapsto |t|^{1/2}\Sigma
\]
is a tangent flow at the spacetime singularity of the mean curvature flow associated
to a critical $g$-wheel.

\begin{lemma}\label{shape-lemma}
If $\Sigma\in \Ss_g$, then $\Sigma^+:=\Sigma\cap\{z>0\}$
is the  graph of a smooth function~$f$, and 
\begin{equation}\label{shape-inequalities}
\begin{aligned}
\pdf{}\theta f(r\cos\theta,r\sin\theta) &\ge 0   \quad (0\le \theta \le \pi/g),
\\
\pdf{}\theta f(r\cos\theta, r\sin\theta) &\le 0  \quad (-\pi/g \le \theta \le 0),
\end{aligned}
\end{equation}
provided $(r\cos\theta, r\sin\theta)$ is in the domain of $f$.
\end{lemma}

\begin{proof}
That $\Sigma^+$ is a smooth graph was proved in Theorem~\ref{2nd-trichotomy-theorem}.
For any $g$-wheel (or, more generally, for any $g$-surface),
\[
   \nu\cdot\Theta\le 0 
\]
in the wedge $W(0,\pi/g)$. 
(See Definition~\ref{g-surface-definition}.)
The same holds for $\Sigma$, since $\Sigma$ is a smooth limit of $g$-wheels.
By symmetry,
\[
  \nu\cdot\Theta\ge 0
\]
in the wedge $W(-\pi/g,0)$.
The inequalities~\eqref{shape-inequalities} follow immediately.
\end{proof}

If $\Sigma$ is a shrinker, then
\[
   t\in (-\infty,0)\mapsto |t|^{1/2}\Sigma
\]
is a mean curvature flow, so $C(\Sigma):=\lim_{t\to 0}|t|^{1/2}\Sigma$ exists.   
Note that $C(\Sigma)$ is a cone, i.e., that $\lambda\Sigma=\Sigma$ for all $\lambda>0$.

We let
\begin{equation}\label{r-R-Sigma}
\begin{aligned}
r(\Sigma)&= \min \{|p|: p\in \Sigma\cap\{z=0\} \}, \\
R(\Sigma)&= \max  \{|p|: p\in \Sigma\cap\{z=0\} \}.
\end{aligned}
\end{equation}

Let $\bar{r}$ be the largest $r$ such that the disk
\[
   D(r):= \{p:|p|\le r\} \cap \{z=0\}
\]
is stable for the shrinker metric, and let
 $\bar{R}$ be the smallest $R$ such that
\[
   \{z=0\}\setminus D(R)
\]
is stable.  Since $\{z=0\}$ is unstable, we see that $\bar{r}<\infty$ and $\bar{R}>0$.

\begin{lemma}\label{r-R-bounds}
If $\Sigma\in \Ss_g$, then $r(\Sigma)<\bar{r}$ and $R(\Sigma)>\bar{R}$. 
\end{lemma}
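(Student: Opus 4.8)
The plan is to deduce everything from the stability of the components of $\{z=0\}\setminus\Sigma$ supplied by Theorem~\ref{stable-theorem}. Since $\Sigma\in\Ss_g$ has genus $g-1\ge 2$ (recall $g\ge 3$), $\Sigma$ is not the plane $\{z=0\}$, so every component of $\{z=0\}\setminus\Sigma$ is stable for the shrinker metric; in particular the unbounded component $\Omega$ of $\{z=0\}\setminus\Sigma$ is stable. I will show that $\Omega$ contains $\{|p|<r(\Sigma)\}$ and $\{|p|>R(\Sigma)\}$, in fact as \emph{proper} subdomains, and then conclude by domain monotonicity of the bottom $\lambda_1$ of the spectrum of the stability operator.

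First I would record the structure of $\Sigma$. By Theorem~\ref{2nd-trichotomy-theorem}, Lemma~\ref{origin-lemma}, and Theorem~\ref{pancake-shrinker-theorem}, $\Sigma\cap\{z=0\}$ is a union of $g$ pairwise disjoint simple closed curves $C_1,\dots,C_g$, and $\pi$ maps $\Sigma^+$ diffeomorphically onto $\Omega$; since $\Sigma^+$ is a genus-zero surface with $g$ boundary circles, $\{z=0\}\setminus\Sigma$ has exactly $g$ bounded components $U_1,\dots,U_g$, each a topological open disk with $\partial U_k=C_k$ (exactly as in the proof of Corollary~\ref{pancake-shrinker-genus-corollary}). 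Since $\Sigma$ is a rescaled limit at the origin of a critical $g$-wheel flow, it is invariant under $\Rr_{2\pi/g}$, and $\Rr_{2\pi/g}$ acts on $\{C_1,\dots,C_g\}$ (equivalently on $\{U_1,\dots,U_g\}$) with no fixed point: the curve arising as the rescaled limit of the inner curve $\Rr_{\pi/g}^{j}\Gin$ of the approximating wheels is carried by $\Rr_{2\pi/g}=\Rr_{\pi/g}^{2}$ to the one arising from $\Rr_{\pi/g}^{j+2}\Gin$, and $j+2\not\equiv j$ mod $g$ since $g\ge 3$. Hence the fixed point $0$ of $\Rr_{2\pi/g}$ lies on no $C_k$ and in no $U_k$, so $0\in\Omega$. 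Now $\{|p|<r(\Sigma)\}$ is connected, disjoint from $\Sigma$, and contains $0\in\Omega$, so it lies in $\Omega$; and $\{|p|>R(\Sigma)\}$ is connected, unbounded, contained in $\{z=0\}$, and disjoint from $\Sigma\cap\{z=0\}$ (hence from $\Sigma$), so it lies in the unbounded component $\Omega$. Being subdomains of the stable domain $\Omega$, both $D(r(\Sigma))$ and $\{z=0\}\setminus D(R(\Sigma))$ are stable, so $r(\Sigma)\le\bar r$ and $R(\Sigma)\ge\bar R$.

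To upgrade these to strict inequalities, I would observe that both inclusions are proper: $D(r(\Sigma))$ is bounded while $\Omega$ is not, and $\Omega\ne\{|p|>R(\Sigma)\}$ because $\{z=0\}\setminus\Omega=\bigsqcup_{k}\overline{U_k}$ is a disjoint union of $g\ge2$ nonempty closed sets (the $\overline{U_k}$ are pairwise disjoint since $\overline{U_i}\cap\overline{U_j}\subseteq C_i\cap C_j=\varnothing$), whereas $\{z=0\}\setminus\{|p|>R(\Sigma)\}=\overline{D(R(\Sigma))}$ is connected. Then, by strict monotonicity of $\lambda_1$ under proper domain inclusion together with continuity of $r\mapsto\lambda_1(D(r))$ and of $R\mapsto\lambda_1(\{z=0\}\setminus D(R))$, one gets $\lambda_1(D(r(\Sigma)))>\lambda_1(\Omega)\ge0$ and $\lambda_1(\{z=0\}\setminus D(R(\Sigma)))>\lambda_1(\Omega)\ge0$; continuity then makes $D(r)$ stable for some $r>r(\Sigma)$ and $\{z=0\}\setminus D(R)$ stable for some $R<R(\Sigma)$, so $r(\Sigma)<\bar r$ and $R(\Sigma)>\bar R$.

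I expect the only step needing real care to be the strict domain monotonicity of $\lambda_1$ for these Gaussian-weighted domains. For the bounded disk it is routine: its positive first eigenfunction, extended by $0$ to $\Omega$, is a legitimate but non-eigenfunction test function, so unique continuation forces the strict drop $\lambda_1(\Omega)<\lambda_1(D(r(\Sigma)))$. For the exterior domain, one first notes that, since the weight $e^{-|x|^2/4}$ gives a compact Sobolev embedding, $\{z=0\}\setminus D(R(\Sigma))$ also has a genuine positive first eigenfunction, and the same zero-extension and unique-continuation argument applies; alternatively the zero-extension is a positive subsolution that is not a solution on $\Omega$, which forces $\lambda_1(\Omega)<0$, contradicting stability. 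Everything else is a direct consequence of Theorem~\ref{stable-theorem} and the already-established geometry of $\Sigma$.
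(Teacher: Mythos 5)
Your proposal is correct and follows essentially the same route as the paper's (three-line) proof: stability of the unbounded component $\Omega$ from Theorem~\ref{stable-theorem}, the inclusions $D(r(\Sigma))\subset\Omega$ and $\{z=0\}\setminus D(R(\Sigma))\subset\Omega$, and strict stability of proper subdomains of a stable domain to get the strict inequalities. You simply make explicit two points the paper leaves implicit --- that $0$ lies in $\Omega$ (via the fixed-point-free action of $\Rr_{2\pi/g}$ on the inner curves) and the eigenvalue-monotonicity/continuity argument --- and both are handled correctly.
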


\begin{proof}
Let $\Omega$ be the unbounded component of $\{z=0\}\setminus \Sigma$.
By Theorem~\ref{stable-theorem}, 
 $\Omega$ is stable (with respect to the shrinker metric~\eqref{shrinker-metric}).
Since $\Omega$ contains $D(r(\Sigma))$, we see that $D(r(\Sigma))$ is strictly
stable.  Thus $r(\Sigma)<\bar{r}$.
Likewise $\{z=0\} \setminus D(R(\Sigma))$ is strictly stable, so $R(\Sigma)>\bar{R}$.
\end{proof}

\begin{lemma}\label{slice-lemma}
Let $\Sigma\in \Ss_g$, and let
\begin{gather*}
\sigma: (-\infty, -R] \cup [-r, r] \cup [R,\infty) \to \RR, \\
\sigma(x)= f(x,0),
\end{gather*}
where $r=r(\Sigma)$ and $R=R(\Sigma)$ (see~\eqref{r-R-Sigma}),
and where $f$ is the function 
whose graph is $\Sigma\cap\{z>0\}$ 
  (as in Theorem~\ref{pancake-shrinker-theorem}).
Then $\sigma$ is strictly decreasing on $[0,r]$ and strictly increasing 
on $[R,\infty)$.
\end{lemma}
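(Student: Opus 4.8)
The plan is to reduce the statement to showing that $\sigma$ has no critical point in $(0,r)$ and none in $(R,\infty)$; granting the boundary behaviour recorded below, the two monotonicity assertions then follow from continuity. First I would set up the geometry. By Lemma~\ref{origin-lemma}\eqref{origin-3} and Corollary~\ref{pancake-shrinker-genus-corollary}, $\Sigma\cap\{z=0\}$ is a disjoint union of $g$ smooth simple closed curves, each bounding a disk in $\{z=0\}$; by the $\Rr_{2\pi/g}$-symmetry these disks $D_0,\dots,D_{g-1}$ are the rotates of $D_0\subset\overline{W(-\pi/g,\pi/g)}$, they are pairwise disjoint, none contains the origin, $\Omega=\{z=0\}\setminus\bigcup_kD_k$, and $\Sigma^+$ is the graph over $\Omega$ of the smooth function $f>0$ of Theorem~\ref{pancake-shrinker-theorem}\eqref{p-shrinker-graph}. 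Since $\Sigma$ is a strict $g$-surface (the inequality $\nu\cdot\Theta\le0$ on $\Sigma\cap W(0,\pi/g)$ is strict, by the strong maximum principle applied to the flow $t\mapsto|t|^{1/2}\Sigma$ as in Theorem~\ref{instant-g-surface-theorem}, $\Sigma$ having positive genus), the argument of Theorem~\ref{theta-graph-theorem} applied to $\partial D_0$ shows that each circle $\{r'=\mathrm{const},\,z=0\}$ meets $\partial D_0\cap W(0,\pi/g)$ at most once; hence $\partial D_0$ meets the positive $x$-axis in exactly two points, and these are the nearest and farthest points of $\Sigma\cap\{z=0\}$ to the origin, namely $(r,0,0)$ and $(R,0,0)$ with $r=r(\Sigma)<R=R(\Sigma)$. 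The remaining curves avoid the positive $x$-axis, so the segments $[0,r)$ and $(R,\infty)$ lie in $\Omega$: thus $\sigma(x)=f(x,0)$ is defined and continuous there, with $\sigma>0$ on $[0,r)\cup(R,\infty)$, $\sigma(r)=\sigma(R)=0$, and $\sigma(0)>0$. As $\Sigma$ meets $\{z=0\}$ orthogonally along $\partial D_0$, the graph of $f$ becomes vertical at the two crossings, so $\sigma'(x)\to-\infty$ as $x\to r^-$ and $\sigma'(x)\to+\infty$ as $x\to R^+$; since the origin is the unique fixed point of $\Rr_{2\pi/g}$ and $f\circ\Rr_{2\pi/g}=f$, we have $\nabla f(0)=0$, hence $\sigma'(0)=0$; and by Theorem~\ref{pancake-shrinker-theorem}\eqref{p-shrinker-cone-2}, $\Sigma$ is smoothly asymptotic to a cone $C$ with $C\cap\{z>0\}$ the graph over $\RR^2\setminus\{0\}$ of a positive degree-one homogeneous function, so the section of $C$ by the positive $x$-axis is a ray of positive slope and $\sigma'(x)$ tends to a positive limit as $x\to\infty$.

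Next I would invoke the shrinker equation twice. First, writing the self-shrinker graph equation for $f$ in divergence form, at any $p\in\Omega$ with $\nabla f(p)=0$ it degenerates to $\Delta f(p)=\tfrac12 f(p)>0$, so $f$ has no interior local maximum in $\Omega$. Second, and more useful here: by the standard computation for self-shrinkers (cf.\ Proposition~\ref{Theta-proposition} with constant vector fields in place of $\Theta$, in rescaled time), $\nu\cdot\ee_1$, $\nu\cdot\ee_2$, $\nu\cdot\ee_3$ are all Jacobi fields of the shrinker, solving one and the same linear equation $Lu=\mu u$, while $\nu\cdot\ee_3>0$ on $\Sigma^+$ because $\Sigma$ is a strict pancake. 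Hence the quotients $-f_x=(\nu\cdot\ee_1)/(\nu\cdot\ee_3)$ and $-f_y=(\nu\cdot\ee_2)/(\nu\cdot\ee_3)$ each satisfy a uniformly elliptic second-order equation with \emph{no} zeroth-order term, and therefore have no interior local extremum on $\Omega$. Combined with the first paragraph, this reduces the proof to a contradiction: if $\sigma$ failed to be strictly decreasing on $[0,r]$, then (using $\sigma'(0)=0$ and $\sigma'(r^-)=-\infty$) $f_x$ would have a critical point on the $x$-axis in $(0,r)$ which, having no local extremum, must be a (possibly degenerate) saddle of the function $f_x$; the same holds on $(R,\infty)$, using in addition that $\sigma'$ is positive near $R^+$ and near $+\infty$.

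The remaining step---ruling out such a saddle---is the one I expect to be the real obstacle. I would attack it with the strong maximum principle, using: the behaviour of $f_x$ and $f_y$ near the boundary curves $\partial D_k$ (where the graph is vertical, so $f_x,f_y$ encode the geometry of $\partial D_k$); the reflectional symmetries across $\{y=0\}$ and the planes $P(k\pi/g)$ (so $f_x$ is even and $f_y$ odd in the transverse variable, whence $f_y\equiv0$ on the $x$-axis); the $\Rr_{2\pi/g}$-symmetry; and the absence of interior maxima of $f$; in the $[R,\infty)$ case one also uses the asymptotic cone of Theorem~\ref{pancake-shrinker-theorem}\eqref{p-shrinker-cone-2}. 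An equivalent---and perhaps cleaner---route is an Alexandrov-type moving-plane argument for $\Sigma$ with a pencil of planes \emph{through the origin} (so that reflection preserves the self-shrinker equation), together with the Hopf boundary-point lemma of~\cite{choi-h-h-w}, in the spirit of the proofs of Theorems~\ref{pancake-regularity-theorem} and~\ref{g-surface-regularity-theorem}. Identifying the correct pencil of planes---equivalently, the correct comparison function whose vanishing locus separates $\Sigma^+$ into regions on which $f_x$ has the required sign---is the heart of the matter; everything else is bookkeeping with the symmetries and the boundary behaviour established above.
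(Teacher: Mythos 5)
There is a sign error at the pivotal step of your second paragraph, and it is what forces you onto the long road you then fail to complete. With the convention used here (so that $t\mapsto|t|^{1/2}\Sigma$ is a mean curvature flow), the shrinker equation is $\vec H=-\tfrac12 p^\perp$ (test it on the cylinder of radius $\sqrt2$, whose mean curvature vector points inward with magnitude $1/\sqrt2$; the paper writes this as $H=\tfrac12(-p)^\perp$). At a critical point of $f$ the unit normal is $\ee_3$ and $\vec H\cdot\ee_3=\Delta f$, while $p^\perp\cdot\ee_3=f>0$; hence $\Delta f=-\tfrac12 f<0$ at every critical point, \emph{not} $+\tfrac12 f>0$. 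So the shrinker equation forbids interior local \emph{minima} of $f$, not maxima. This is not cosmetic: the correctly signed statement, combined with the angular information you already assembled in your first paragraph, finishes the lemma in a few lines, and this is exactly the paper's proof. The $g$-surface condition $\nu\cdot\Theta\le0$ on $W(0,\pi/g)$ translates into $\partial_\theta f\ge0$ there, and together with the reflections it gives $f(x,0)\le f(x\cos\theta,x\sin\theta)$ for all $\theta$; consequently an interior local minimum of $\sigma$ at $x_0$ would make $(x_0,0)$ an interior local minimum of $f$, contradicting $\Delta f(x_0,0)<0$. Since $\sigma(0)>0=\sigma(r)$, and since $\sigma(R)=0$ with $\sigma(x)\to\infty$ as $x\to\infty$ by Theorem~\ref{pancake-shrinker-theorem}\eqref{p-shrinker-cone-2}, the absence of interior local minima yields both monotonicity assertions directly.

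Even setting the sign aside, the proposal as written is not a proof. You reduce the lemma to excluding a saddle-type critical point of $f_x$ on the positive $x$-axis, and you state explicitly that the mechanism for excluding it (the choice of the pencil of planes, equivalently of the comparison function) is ``the heart of the matter'' and is left open. The Jacobi-field observation is fine as far as it goes --- $\nu\cdot v$ solves $Lu=\tfrac12 u$ for each constant $v$, and $\nu\cdot\ee_3>0$ on $\Sigma^+$, so $f_x=-(\nu\cdot\ee_1)/(\nu\cdot\ee_3)$ satisfies a maximum principle without zeroth-order term --- but it is never brought to a conclusion, and once the sign in $\Delta f=-\tfrac12 f$ is corrected none of that machinery is needed. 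Your first paragraph (the structure of $\Sigma\cap\{z=0\}$, the attainment of $r$ and $R$ on the positive $x$-axis, and the boundary behavior of $\sigma$) is correct and is the right setup; the missing ingredient is simply the observation that radial local minima of $\sigma$ promote to two-dimensional local minima of $f$, which the shrinker equation with the correct sign then rules out.
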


\begin{proof}
First we claim that $\sigma$ has no nonzero interior local minima. 
For suppose that $r_0$ was a nonzero interior local minimum.  We may suppose $r_0 >  0$.
Then, by Lemma~\ref{shape-lemma}, the point $(r_0,0)$ is a local minimum
of the function $f$ in that lemma.  Thus, at the point $p=(r_0, 0, f(r_0,0))$,  
\[
    H\cdot \nu = H\cdot\ee_3 \ge 0.
\]
But, since $\Sigma$ is a shrinker, $H = (-p)^\perp/2$ and thus $H\cdot\ee_3< 0$, 
a contradiction.
Hence $\sigma$ has no interior local minima, as claimed.

Since $\sigma(0)>0$ and $\sigma(r)=0$, it follows that $\sigma$ is strictly
decreasing on $[0,r]$.
Since $\sigma(r)=0$ and $\lim_{x\to\infty}\sigma(x) =\infty$ 
 (by Theorem~\ref{pancake-shrinker-theorem}\eqref{p-shrinker-cone-2}),
 it follows that $\sigma$ is strictly increasing on $[R,\infty)$.
\end{proof}

\begin{theorem}\label{flat-shrinker-theorem}
If $\Sigma_g\in \Ss_g$, then $\Sigma_g$ converges to plane $\{z=0\}$
as $g\to\infty$.
After passing to a subsequence,  the convergence is smooth with multiplicity $2$ away from
the circle 
\[
 C = C(r):=  \{p: \text{$z(p)=0$ and  $|p|=r$}\},
\]
for some $r\in(0,\infty)$.
Furthermore, the $\Sigma_g$ converge as varifolds to $\{z=0\}$ with multiplicity $2$.
\end{theorem}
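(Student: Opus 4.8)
The goal is to show that as $g\to\infty$, shrinkers $\Sigma_g\in\Ss_g$ degenerate to the multiplicity-$2$ plane $\{z=0\}$, with smooth convergence off a single circle $C(r)$. I would organize the proof around three ingredients: (a) uniform control on the geometry of $\Sigma_g$ near $\{z=0\}$ coming from the $g$-wheel symmetry, which forces the angular oscillation of $\Sigma_g$ to shrink; (b) the entropy/compactness machinery (Huisken monotonicity, Brakke compactness, together with the Bamler–Kleiner regularity) to extract a limit that is a smooth embedded shrinker with multiplicity one away from a small set; and (c) the stability bounds $r(\Sigma_g)<\bar r$, $R(\Sigma_g)>\bar R$ from Lemma~\ref{r-R-bounds} plus Lemma~\ref{slice-lemma} to pin down where the two sheets come together and to rule out the limit being anything other than a plane.

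\textbf{Step 1: symmetry forces flatness.} Each $\Sigma_g$ is invariant under $\Rr_{2\pi/g}$ and under reflection in $P(0)$ and $P(\pi/g)$, and by Theorem~\ref{pancake-shrinker-theorem}\eqref{p-shrinker-graph} its upper half $\Sigma_g^+$ is the graph of $f_g$ over the unbounded component $\Omega_g$ of $\{z=0\}\setminus\Sigma_g$, with the linear growth bound $f_g(x,y)\le\lambda(|(x,y)|+1)$. Together with the $g$-surface inequality $\nu\cdot\Theta<0$, the graph $f_g$ restricted to an annulus of fixed radii is monotone in $\theta$ on each sector $W(k\pi/g,(k+1)\pi/g)$ and symmetric across the sector walls; hence on a fixed compact annular region the angular variation of $f_g$ is $O(1/g)$. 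Combined with interior estimates for the shrinker equation (which is uniformly elliptic on regions where $|\nabla f_g|$ is bounded, and one gets such a bound on fixed annuli away from $\{f_g=0\}$ from the maximum principle / Schauder theory applied to the elliptic shrinker PDE), this gives uniform $C^k$ bounds on $f_g$ on compact subsets of $\{z\neq 0\}\cup(\{z=0\}\setminus\{|p|=r\})$ once the radii of the curves $\Sigma_g\cap\{z=0\}$ are controlled.

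\textbf{Step 2: compactness and identification of the limit.} Shrinkers have entropy bounded by the entropy of the relevant initial $g$-wheels; more robustly, one can run Brakke compactness on the flows $t\mapsto|t|^{1/2}\Sigma_g$ and invoke Theorem~\ref{bk-theorem} to conclude the limit flow is a self-similar flow $t\mapsto|t|^{1/2}\Sigma_\infty$ with $\Sigma_\infty$ smooth and of multiplicity one wherever the convergence is smooth. By the rotational symmetry in the limit ($\Rr_\theta$-invariance for all $\theta$, since $2\pi/g\to 0$) and the reflection symmetry in $\{z=0\}$, $\Sigma_\infty$ is a rotationally symmetric shrinker. By Lemma~\ref{r-R-bounds}, $r(\Sigma_g)\in(0,\bar r)$ and $R(\Sigma_g)\in(\bar R,\infty)$; but I also need an \emph{upper} bound on $r(\Sigma_g)$ and control showing $R(\Sigma_g)$ does not escape to $\infty$ or pinch $r$ and $R$ together. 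Here I would use Lemma~\ref{slice-lemma}: along the $x$-axis, $\sigma_g$ is decreasing on $[0,r(\Sigma_g)]$ from $\sigma_g(0)=f_g(0,0)$ to $0$, and increasing on $[R(\Sigma_g),\infty)$. If $r(\Sigma_g)$ and $R(\Sigma_g)$ stayed bounded away from each other, the two sheets $\Sigma_g^\pm$ over the annulus $r<|p|<R$ would enclose a region of definite size, and the bounds from Step 1 would make $\Sigma_\infty$ a rotationally symmetric shrinker with a nonempty ``neck'' region between two curves — but such a shrinker with two ends asymptotic to a plane, of the graphical type forced here, together with multiplicity one, would have to be a genus-zero surface, contradicting that one can choose $g$-independent test curves showing positive genus persists. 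The cleanest route: show $R(\Sigma_g)-r(\Sigma_g)\to 0$, so both converge to a common value $r\in[\bar R,\bar r]\subset(0,\infty)$ after passing to a subsequence, and the limit $\Sigma_\infty$ coincides with $\{z=0\}$ away from $C(r)$.

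\textbf{Step 3: upgrading to multiplicity $2$ and the varifold statement.} Once $\Sigma_g\to\{z=0\}$ smoothly (and with multiplicity one on each of the two sheets) away from $C(r)$, the two sheets $\Sigma_g^+$ and $\Sigma_g^-=\{z\le 0\}$ part both converge to $\{z=0\}$, so the varifolds $[\Sigma_g]$ converge on $\{z\neq 0\}$ to $2\,[\{z=0\}]$. To get convergence as varifolds \emph{globally} (including across $C(r)$), I would use that the area ratios of shrinkers are uniformly bounded (Gaussian-density/entropy bound) so no mass is lost or concentrated in the limit: any varifold subsequential limit is stationary for the Gaussian weight, is supported in $\{z=0\}$ (since the graphs pinch to it), has no boundary, and has multiplicity $2$ off a measure-zero circle, hence equals $2[\{z=0\}]$ everywhere. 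This yields the stated conclusion.

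\textbf{Main obstacle.} The genuinely hard part is \textbf{Step 2}: controlling $r(\Sigma_g)$ and $R(\Sigma_g)$ so that they collapse to a common finite positive value, and ruling out degeneration where the limit shrinker is not planar (e.g.\ the two curves staying separated and the limit being a nonplanar rotationally symmetric shrinker, or the curves running off to $0$ or $\infty$). The two-sided stability bounds of Lemma~\ref{r-R-bounds} give one half of this for free; the reverse estimate and the exclusion of a nonflat rotationally symmetric limit is where one must combine the positive-genus persistence (via Angenent-torus linking, as in Theorem~\ref{thin-theorem}/\ref{thick-theorem}) with the classification of low-genus rotationally symmetric shrinkers. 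I expect this to be the technical heart of \S\ref{behavior-section}.
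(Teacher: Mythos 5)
Your overall architecture (symmetry of $\Sigma_g$ forces rotational invariance of any limit; compactness off a bad set; pin down the radii; identify the limit as the double plane) matches the paper's, but the two steps you yourself flag as the ``technical heart'' are exactly where the proof lives, and your sketches for them do not work as stated.

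First, the claim $R(\Sigma_g)-r(\Sigma_g)\to 0$. Your proposed contradiction via ``positive genus persists'' is not coherent: $\genus(\Sigma_g)=g-1\to\infty$, and all of that genus concentrates at the collapsing circle no matter what $r$ and $R$ do, so genus considerations cannot distinguish $r<R$ from $r=R$. The paper's actual mechanism is different. If $r<R$, then (by Theorem~\ref{pancake-shrinker-theorem}\eqref{p-shrinker-graph} and Lemma~\ref{slice-lemma}) the vertical slab over the segment $\{(x,0): r<a\le x\le b<R\}$ misses $\Sigma_g$ for large $g$, so by rotational invariance the limit lamination misses the whole open solid annulus $\{r<\dist(\cdot,Z)<R\}$. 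On the other hand, each $\Sigma_g$ contains a nonempty $\gamma$-minimal annulus $\Aa_g=\Sigma_g\cap\{a<\dist(\cdot,Z)<b\}\cap W(0,2\pi/g)$ joining the two cylinders; as $g\to\infty$ these collapse onto the line segment $I=\{(x,0,0): a<x<b\}$, which would make $I$ a $(2,0)$-set in the sense of~\cite{white-controlling} --- impossible. That area-blowup/collapse input (or the isoperimetric alternative the paper gives) is the missing idea; nothing in your Step~2 replaces it. Note also that Lemma~\ref{r-R-bounds} already gives both one-sided bounds you need ($r\le\bar r<\infty$ and $R\ge\bar R>0$), so once $r=R$ the common value is automatically in $(0,\infty)$; you do not need a separate argument that $R(\Sigma_g)$ stays bounded.

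Second, the identification of the limit. In Step~3 you assert the limit is ``supported in $\{z=0\}$ (since the graphs pinch to it),'' which assumes the conclusion: a priori the rotationally invariant limit leaf through $C(r)$ could be the cylinder $\{\dist(\cdot,Z)=\sqrt2\}$ (or, before further analysis, something compact). The paper first uses the first-variation balancing of the four geodesic arcs of the profile curve at $(r,0)$ to see the limit is two embedded genus-zero shrinkers crossing along $C$, then applies Brendle's classification, and finally excludes the cylinder because multiplicity-two smooth convergence in $\{z>0\}$ would force the half-cylinder to be stable, contradicting \cite{brendle}*{Proposition~5}. Some such stability or classification input is required; your proposal does not supply it.
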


\begin{proof}
By passing to a subsequence, we can assume that $r(\Sigma_g)$ and $R(\Sigma_g)$
converge to limits $r\le R$, where $r\in [0,\infty)$ and $R\in (0,\infty]$ by 
  Lemma~\ref{r-R-bounds}.
Let
\[ 
   A(r,R) = \{ p: z(p)=0, \, r\le |p|\le R\}.
\]
By a straightforward curvature estimate (see Theorem~\ref{curvature-bound-theorem} below),
  we can assume that the $\Sigma_g$ converge smoothly
in $\RR^3\setminus A(r,R)$ to a $\gamma$-minimal lamination $\Sigma$ of $\RR^3\setminus A(r,R)$, where $\gamma$ is the shrinker
metric~\eqref{shrinker-metric}.

\begin{claim}\label{no-vertical-claim}
 No leaf of $\Sigma$ is contained in a vertical plane $P$ containing $Z$.
\end{claim}

For suppose to the contrary that there is a leaf $L$ and a plane $P$ such that
$L\cup Z\subset P$.   Then $L^+:=L\cap \{z>0\}$ is all of $P^+:=\{z>0\}$.
By symmetry, $\Rr_\theta L$ is also a leaf for each $\theta$.
But for $0<\theta<\pi$, $L$ and $\Rr_\theta L$ are distinct leaves whose
intersection is nonempty (the intersection is $Z^+$), a contradiction.
This proves Claim~\ref{no-vertical-claim}.

\begin{claim}\label{rotational-claim} Each leaf of $\Sigma$ is rotationally invariant about $Z$.
\end{claim}

For $p\in Z^c$, $P_p$ be the plane containing $Z\cup \{p\}$.
Let $L$ be a leaf.  Since $L\cup Z$ is not contained in a vertical plane,
there is a curve $\alpha:I \subset \RR\to L\setminus Z$ such that
$\theta\circ \alpha$ is strictly monotonic (where $\theta$ is the cylindrical
coordinate.)  
The symmetries of the $\Sigma_g$ imply that for each $t\in I$,
the image of $L$ under reflection in $P_{\alpha(t)}$ is also a leaf.
Since those leaves intersect (they both contain the point $\alpha(t)$)
they must be the same.  That is, for each $t\in I$, $L$ is invariant under
reflection in $P_{\alpha(t)}$.  It follows that there is an $\omega>0$
such that $\Rr_\theta L=L$ for all $0\le \theta<\omega$.  But then $\Rr_\theta L = L$
for all $\theta$.  Thus Claim~\ref{rotational-claim} is proved.

\begin{claim}\label{r-R-claim} $0< r=R < \infty$.
\end{claim}

Suppose, contrary to the claim, that $r<R$.
Note that if $r<a<b<R$, then
\begin{equation}\label{disjoint}
  \text{$[a,b]\times \{0\}\times \RR$ 
is disjoint from $\Sigma_g$ for all sufficiently large $g$.}
\end{equation}
If some leaf $L$ of $\Sigma$ contained a point $p$ with 
\begin{equation}\label{a-b-cylinder}
   a < \dist(p,Z)<b,
\end{equation}
then, by rotational symmetry, $L$ would contain point in the strip $(a,b)\times\{0\}\times \RR$.
By rotational symmetry, $L$ would intersect that strip orthogonally.  But then, by smooth convergence,
$\Sigma_g$ would intersect the strip for  large $g$, contrary to~\eqref{disjoint}.
Thus $\Sigma$ contains no point $p$ satisfying~\eqref{a-b-cylinder}.
 Since this is true for all $r<a<b< R$, we see that
\[
  \Sigma \cap   \{p: r< \dist(p,Z) < R\} = \emptyset.
\]
Now let $r < a < b < R$, 
and let
\[
    \Aa_g : = \Sigma_g \cap \{p: a<\dist(p,Z)<b\} \cap W(0, 2\pi/g).
\]
Note that $\Aa_g$ is a $\gamma$-minimal annulus with one boundary
component in the cylinder $\{\dist(\cdot,Z)=a\}$ and the other boundary
component in the cylinder $\{\dist(\cdot,Z)=b\}$.

As $g\to\infty$, the annuli $\Aa_g$ converge as subsets of $\{a<\dist(\cdot,Z)<b\}$
to the line segment 
\[
     I: = \{ (x,0,0): a<x<b\}.
\]
But that is a contradiction; such long, thin minimal surfaces do not
exist.

(Here is one proof.  In the terminology of~\cite{white-controlling}, the segment $I$ would be a
$(2,0)$ subset of $\{a<\dist(\cdot,Z)<b\}$.
But that violates various properties of $(2,0)$-subsets.
For example, a nonempty, $(2,0)$ subset of a smooth, embedded surface
  must be the whole surface~\cite{white-controlling}*{Theorem~1.3}.
In this case, we can take the surface to be $\{p: \text{$z(p)=0$ and $a<|p|<b$}\}$.)

This completes the proof that $r=R$.  By Lemma~\ref{r-R-bounds}, $r=R$ is
 positive and finite.
Thus Claim~\ref{r-R-claim} is proved.

Let 
\begin{equation}\label{Gamma-def}
  \Gamma = \{(x,z): x\ge 0, \, (x,0,z)\in \Sigma\}.
\end{equation}
Thus 
\[
  \Sigma= \{(x,y,z): (\sqrt{x^2+y^2},z)\in \Gamma\}.
\]
From Lemma~\ref{slice-lemma}, we see that $\Gamma$ consists of four curves (counting multiplicity)
that meet at the point $(r,0)$.  The arcs are geodesics with respect
to a certain metric $\tilde\gamma$ on $\{(x,z): x\ge 0\}$.
One arc, $\alpha^+$, lies in $[0,r]\times[0,\infty]$, and at every point of that arc,
\[
  -\infty \le  \frac{dz}{dx} \le  0.
\]
Another arc, $\beta^+$, lies in $[r,\infty)\times[0,\infty)$,  and 
 at every point of that arc,
\[
    0 \le  \frac{dz}{dx} \le \infty.
\]
The other arcs $\alpha^-$ and $\beta^-$ are the images of $\alpha^+$ and $\beta^+$
under $(x,z)\mapsto (x,-z)$.

\begin{claim}\label{bounded-areas-claim}
The areas of the $\Sigma_g$ are uniformly bounded (independent of $g$) in compact sets.
\end{claim}

\begin{proof}[Proof of Claim~\ref{bounded-areas-claim}]
From the description of $\Gamma$, we see that the areas of the $\Sigma_g$
are uniformly bounded on compact sets disjoint from the circle $C$.
Thus if  Claim~\ref{bounded-areas-claim} is false, then the area blowup set consists of the circle $C$.
Thus $C$ is a $(2,0)$ set (with respect to the shrinker metric).
But that violates many properties of $(2,0)$ sets. (For example, $C$ is a nonempty
subset of the connected $\gamma$-minimal surface $\{z=0\}$, so $C$ would have
to contain all of $\{z=0\}$.)

Here is an alternate proof of Claim~\ref{bounded-areas-claim}.
  If $\eps>0$ is small, then the set $J(\eps)$ of points at Euclidean
distance $\le \eps$ from $C$ has mean convex boundary,
and $J(\eps)\setminus C$ is foliated by the mean convex tori $\partial C(s)$, $0<s\le \eps$.
Hence $J(\eps)$ does not contain any closed
$\gamma$-minimal surfaces.
Thus, by~\cite{white-isoperimetric},
 $J(\eps)$ admits an isoperimetric inequality: if $S$ is a $\gamma$-minimal
surface in $J(\eps)$, then the area of $S$ is bounded by a constant $c$ times the length of $\partial S$.
Applying the isoperimetric inequality to $\Sigma_g\cap J(\eps)$ gives Claim~\ref{bounded-areas-claim}.
\end{proof}

Since the areas are uniformly bounded on compact sets, the $\Sigma_g$
converge as varifolds to $\Sigma$. 
Thus $\Sigma$ is minimal with respect to the shrinker metric.
Hence $\Gamma$ (see~\eqref{Gamma-def}) is a stationary varifold (with respect to the metric 
  $\tilde\gamma$.)
Recall that $\Gamma$ consists of four geodesic arcs, $\alpha^+$, $\beta^+$, $\alpha^-$,
and $\beta^-$, that meet at $(r,0)$.
Since $\Gamma$ is stationary, the four unit tangent vectors to those curves at $(r,0)$ sum to $0$.
It follows that the unit tangent vectors to $\alpha^+$ and $\beta^-$ at $(r,0)$ are equal and opposite.
Thus $\alpha^+$ and $\beta^-$ fit together to  form a single geodesic, and likewise for 
  $\alpha^-$ and $\beta^+$.
  
Consequently, $\Sigma$ consists of two embedded shrinkers $\Sigma'$ and $\Sigma''$ that intersect along $C$ and that are images of each other under reflection in $\{z=0\}$.

\newcommand{\Cyl}{\textnormal{Cyl}}

Now $\Sigma'$ is embedded and noncompact, has genus $0$, and contains the circle $C$.
Thus by Brendle's theorem~\cite{brendle}, $\Sigma'$ is either the plane $\{z=0\}$
or the cylinder $\Cyl:= \{\dist(\cdot,Z)=\sqrt2\}$. (Of course, in the latter case, $r$ would be $\sqrt2$.)

If $\Sigma'=\Cyl$, then $\Sigma_g\cap\{z>0\}$ converges smoothly and with multiplicity $2$ in $\{z>0\}$
to  $\Cyl^+:= \Cyl\cap\{z>0\}$, which implies 
  (cf.~\cite{white-compact}*{Theorem~1.1}) that $\Cyl^+$ is stable (for the shrinker metric).
But $\Cyl^+$ is unstable by~\cite{brendle}*{Proposition~5}.

Thus $\Sigma'$ is the plane $\{z=0\}$, so $\Sigma$ is that plane with multiplicity $2$.
This completes the proof of Theorem~\ref{flat-shrinker-theorem}.
\end{proof}

\begin{theorem}\label{flat-cone-theorem}
Suppose that $\Sigma_g\in \Ss_g$, and let $C(\Sigma_g)=\lim_{t\to 0} |t|^{1/2}\Sigma_g$.
Then the cones $C(\Sigma_g)$ converge as $g\to \infty$ to $\{z=0\}$ with multiplicity $2$,
and the convergence is smooth away from the origin.
\end{theorem}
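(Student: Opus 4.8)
The plan is to realize each cone $C(\Sigma_g)$ as the time‑zero slice of a self‑similar mean curvature flow, and then transfer Theorem~\ref{flat-shrinker-theorem} to that slice by exploiting the parabolic rescaling symmetry of the flow.

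First I would record the structure of $C_g:=C(\Sigma_g)$ and set up the flow. By Theorem~\ref{pancake-shrinker-theorem}, $C_g$ is a cone with $C_g\cap\{z=0\}=\{0\}$, its part $C_g^+:=C_g\cap\{z>0\}$ is the multiplicity‑one graph over $\RR^2\setminus\{0\}$ of a smooth $1$‑homogeneous positive function, and $C_g$ inherits from $\Sigma_g$ the invariance under $\Rr_{2\pi/g}$, under reflection in the planes $P(k\pi/g)$, and under reflection in $\{z=0\}$, together with $\nu\cdot\ee_3>0$ on $C_g^+$ and $\nu\cdot\Theta\le 0$ on $C_g\cap W(0,\pi/g)$. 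Consider the self‑similar Brakke flow
\[
 \mathcal M_g:\quad t\in(-\infty,0]\ \longmapsto\ |t|^{1/2}\,\Sigma_g ,\qquad \mathcal M_g(0):=C_g ,
\]
where $C_g$ is the varifold limit of $|t|^{1/2}\Sigma_g$ as $t\to 0^-$. Each $\mathcal M_g$ is invariant under the parabolic dilations $(x,t)\mapsto(\lambda x,\lambda^2 t)$ about the spacetime origin, and so is its $t\to 0$ slice $C_g$.

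Next I would pass to the limit. By Theorem~\ref{flat-shrinker-theorem}, $\Sigma_g$ converges as varifolds to the multiplicity‑two plane $\{z=0\}$ and, along a subsequence, smoothly with multiplicity two away from the circle $C(r)$; together with the uniform local area bounds from the proof of Theorem~\ref{flat-shrinker-theorem} (Claim~\ref{bounded-areas-claim}) this means the flows $\mathcal M_g$ converge, as Brakke flows on $(-\infty,0)$, to the static flow given by the multiplicity‑two plane, smoothly with multiplicity two away from the shrinking circle $|t|^{1/2}C(r)$. Since $\mathcal M_g$ and the limit flow are invariant under parabolic rescaling about $(0,0)$, and since the curvature estimate of Theorem~\ref{curvature-bound-theorem} is scale invariant, the smooth multiplicity‑two convergence holds at every such parabolic scale; letting the scale tend to zero, it persists up to and including $t=0$. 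Evaluating at $t=0$ gives $C_g\to\{z=0\}$ smoothly and with multiplicity two on $\RR^3\setminus\{0\}$. Because $C_g$ is a cone, $C^1$‑smallness of its graphing function on a single annulus $\{\tfrac12\le|x|\le 2\}$ forces that function to be small in the scale‑invariant $C^1$ sense on all of $\RR^2\setminus\{0\}$.

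Finally I would control the origin. As $C_g$ is a $2$‑cone, $\|C_g\|(B(0,\delta))=\delta^2\,\|C_g\|(B(0,1))$, and $\pi^{-1}\|C_g\|(B(0,1))$ equals the density of $C_g$ at its vertex. By Huisken's monotonicity formula applied to $\mathcal M_g$, that density is at most the Gaussian density $(4\pi)^{-1}\int_{\Sigma_g}e^{-|x|^2/4}\,d\mathcal H^2$ of the shrinker, which is uniformly bounded (it is the Gaussian density of $\mathcal M_g$ at the spacetime origin, and in fact tends to $2$, by upper semicontinuity of Gaussian density and lower semicontinuity of Gaussian area under $\Sigma_g\to\{z=0\}$). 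Hence $\lim_{\delta\to 0}\limsup_g\|C_g\|(B(0,\delta))=0$, so no mass concentrates at the origin; combined with the smooth multiplicity‑two convergence on $\RR^3\setminus\{0\}$ this yields $C(\Sigma_g)\to\{z=0\}$ as varifolds with multiplicity two. The hard part is exactly this passage to $t=0$: upgrading the convergence of the self‑similar flows, known a priori only on $(-\infty,0)$ and only smoothly off the shrinking circle, to include the (singular, conical) time‑zero slices in a way strong enough to identify their limit. This is where the parabolic self‑similarity of $\mathcal M_g$ about $(0,0)$, the scale invariance of the curvature bound, and the no‑concentration from the area bounds enter, and it amounts to re‑running the compactness and rigidity of Theorem~\ref{flat-shrinker-theorem} one scale up.
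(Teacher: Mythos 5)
Your proposal is correct and follows essentially the same route as the paper: realize $C(\Sigma_g)$ as the time-zero slice of the self-similar Brakke flow $t\mapsto |t|^{1/2}\Sigma_g$, deduce varifold convergence of the slices from the Brakke-flow convergence supplied by Theorem~\ref{flat-shrinker-theorem}, and upgrade to smooth convergence away from the origin via graphicality and curvature estimates. Your treatment of the $t=0$ passage and of mass at the vertex (via self-similarity and Huisken's monotonicity) is more detailed than the paper's one-line justification, but it is the same argument in substance.
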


\begin{proof}
Note that the Brakke flows
\[
t\in (-\infty,0]
\mapsto 
\begin{cases}
|t|^{1/2}\Sigma_g  &\text{if $t<0$}, \\
C(\Sigma_g)  &\text{if $t=0$},
\end{cases}
\]
converge to the constant Brakke flow 
which takes each $t< 0$ to the plane $\{z=0\}$ with multiplicity $2$.
Hence $C(\Sigma_g)$ also converges to that plane.
The smoothness of convergence follows from the fact that $\Sigma_g^+$ is graphical
in $\{\dist(\cdot,Z)\ge R(\Sigma_g)\}$ and that $R(\Sigma_g)$ converges to a finite limit as $g\to \infty$.  
\end{proof}

\begin{theorem}\label{curvature-bound-theorem}
For every $R<\infty$, there is a $c=c_R$ with the following property.
If $M$ is a shrinker (possibly with boundary) in $B(0,R)$,
if $|A(M,\cdot)|$ is bounded, and if $\nu\cdot\ee_3\ge 0$ on $M$, then
\[
   |A(M,p)| \, \dist_M(p,\partial M) \le c.
\]
\end{theorem}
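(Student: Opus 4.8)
The plan is to argue by contradiction using a standard blow-up/compactness scheme for shrinkers together with Brendle's classification of genus-zero shrinkers. Suppose the estimate fails. Then there is a fixed $R<\infty$ and a sequence of shrinkers $M_i$ (possibly with boundary) in $B(0,R)$, with $|A(M_i,\cdot)|$ bounded on each $M_i$ and $\nu\cdot\ee_3\ge 0$ on $M_i$, together with points $p_i\in M_i$ such that
\[
 |A(M_i,p_i)|\,\dist_{M_i}(p_i,\partial M_i) \to \infty.
\]
Following the usual point-selection argument (Schoen's trick), I would replace $p_i$ by a point where a suitable weighted version of $|A|$ is nearly maximized on a ball of controlled intrinsic radius; concretely, choose $q_i\in M_i$ maximizing $f_i(q):=|A(M_i,q)|\,\dist_{M_i}(q,\partial M_i)$, set $\lambda_i=|A(M_i,q_i)|$ and $d_i=\dist_{M_i}(q_i,\partial M_i)$, so that $\lambda_i d_i\to\infty$, and on the intrinsic ball of radius $d_i/2$ about $q_i$ one has $|A(M_i,\cdot)|\le 2\lambda_i$ by the near-maximality of $f_i$.

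Next I would rescale: since $M_i$ is a shrinker for the metric $\gamma=e^{-|x|^2/4}\,\delta$, the Euclidean dilation $\tilde M_i := \lambda_i\,(M_i-q_i)$ is, up to the conformal factor which becomes flat as $\lambda_i\to\infty$ (because the $M_i$ stay in the fixed ball $B(0,R)$, so $|q_i|\le R$), asymptotically a solution of the minimal surface equation with respect to flatter and flatter metrics — more precisely, $\tilde M_i$ is a $\gamma_i$-minimal surface where $\gamma_i\to\delta$ in $C^\infty_{\mathrm{loc}}$. By construction $|A(\tilde M_i,0)|=1$ and $|A(\tilde M_i,\cdot)|\le 2$ on the intrinsic ball of radius $\lambda_i d_i/2\to\infty$ about $0$, which lies entirely away from $\partial\tilde M_i$. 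Bounded second fundamental form plus the fixed (Euclidean) sheet through $0$ give, via standard local graphical estimates and the compactness theorem for surfaces with bounded curvature, a subsequential $C^\infty_{\mathrm{loc}}$ limit $M_\infty$ which is a complete, embedded, genus-zero (genus cannot increase under the limit, and a small embedded graphical piece has genus zero; more simply, the limit is a minimal surface with bounded curvature arising as a limit of pieces of genus-$0$-locally surfaces) minimal surface in $\RR^3$, properly embedded, with $|A(M_\infty,0)|=1$ (so $M_\infty$ is not a plane) and satisfying $\nu\cdot\ee_3\ge 0$ everywhere.

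The contradiction now comes from the one-sided condition $\nu\cdot\ee_3\ge 0$ on a complete embedded minimal surface in $\RR^3$. The function $u:=\nu\cdot\ee_3$ on a minimal surface satisfies $\Delta_{M_\infty} u + |A|^2 u = 0$; since $u\ge 0$ and $M_\infty$ is connected, either $u\equiv 0$ or $u>0$ everywhere. If $u\equiv 0$ then $M_\infty$ is a vertical cylinder over a plane curve $S\subset\RR^2$, and a minimal such cylinder forces $S$ to be a straight line, so $M_\infty$ is a plane, contradicting $|A(M_\infty,0)|=1$. If $u>0$ everywhere then $M_\infty$ is a graph over a domain in $\{z=0\}$ by the argument of Theorem~\ref{graphical} (a vertical line meeting $M_\infty$ twice produces a point with $\nu\cdot\ee_3\le 0$ strictly, since at the second-highest intersection point the surface separates the two intersection points of the line with the compact region locally bounded — more carefully, one uses that $M_\infty$ is properly embedded and applies the maximum principle with planes $\{z=\text{const}\}$ sweeping down); then $M_\infty$ is a complete minimal graph over a planar domain, hence by Bernstein-type reasoning for minimal graphs with one-sided Gauss map it must be a plane, again contradicting $|A(M_\infty,0)|=1$. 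Either way we reach a contradiction, which proves the estimate.

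I expect the main obstacle to be the rescaling bookkeeping: checking that the conformal factor of the shrinker metric really does flatten out under the blow-up (this uses $|q_i|\le R$ crucially, so the blow-up point stays in a fixed ball and the Gaussian weight, together with its derivatives, is uniformly controlled there), and that the bounded-curvature compactness applies uniformly despite the metric varying. The topological point — that the limit has genus zero, or can be taken to be so — should be handled by noting that the relevant conclusion only needs that $M_\infty$ is not a plane, together with the one-sided condition, so one can either invoke Brendle's theorem directly if a shrinker structure survives, or, cleanly, argue purely at the level of the limiting minimal surface as above without any genus input.
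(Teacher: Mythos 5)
Your proposal is correct and follows essentially the same route as the paper: a contradiction/point-selection blow-up producing a complete Euclidean minimal surface $M'$ with $|A(M',0)|=1$ and $\nu\cdot\ee_3\ge 0$, which is then shown to be a plane. The paper's only difference is in the last step, where it concludes planarity in one line (the nonnegative Jacobi field $\nu\cdot\ee_3$ makes $M'$ stable, hence planar), whereas your dichotomy between $u\equiv 0$ (cylinder over a line) and $u>0$ (graph plus Bernstein-type reasoning) reaches the same conclusion with slightly more case analysis.
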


Here $\dist_M$ denotes intrinsic distance in $M$. For notational simplicity, 
we will write $\dist$ for $\dist_M$ in the proof below.

\begin{proof}
Suppose not. Then there exist examples $M_i$ and $p_i\in M_i$
such that
\begin{equation}\label{blowup}
  |A(M_i,p_i)| \, \dist(p_i, \partial M_i) \to \infty.
\end{equation}
We may suppose that $p_i$ maximizes the left hand side of~\eqref{blowup}.
Translate $M_i$ by $-p_i$ and then dilate by $|A(M_i,p_i)|$ to get $M_i'$.
Then, after passing to a subsequence, the $M_i'$ converge smoothly to a complete
surface $M'$ such that is minimal with respect to the Euclidean metric,
\[
   |A(M',0)| = \max |A(M',\cdot)| = 1,
\]
and $\nu\cdot \ee_3\ge 0$ on $M'$.
Thus $M'$ is stable and therefore planar.
\end{proof}

\section{Fattening}\label{fattening-section}

In this section, we show that if $g$ is sufficiently large, then every critical
$g$-wheel fattens.

We begin with a simple lemma.

\begin{lemma}\label{cone-lemma}
Suppose that $M$ is a critical $g$-wheel and let $M(t)$, $t\in [0,T_M)$
be the associated mean curvature flow.   If $c_i\to \infty$, then, after passing to 
a subsequence, there is a $\Sigma\in \Ss_g$ such that
\[
   c_i M(T_M) \to C(\Sigma).
\]
\end{lemma}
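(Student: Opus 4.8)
The plan is to read $M(T_M)$ as the time-$T_M$ slice of the (Brakke) flow associated to $M$, and to identify the blow-up of that slice at the origin with $C(\Sigma)$. Since $M$ is critical, the Trichotomy Theorem~\ref{2nd-trichotomy-theorem} forces $(0,T_M)$ to be the unique singular point of the flow at time $T_M$; hence $M(t)\to M(T_M)$ smoothly on compact subsets of $\RR^3\setminus\{0\}$ as $t\to T_M$, and every tangent flow at $(0,T_M)$ has the form $t\mapsto|t|^{1/2}\Sigma$ with $\Sigma\in\Ss_g$. Moreover, since $\genus(\Sigma)=g-1>0$ and $\Sigma\cap\{z=0\}$ is a finite union of simple closed curves (hence compact), Corollary~\ref{unique-corollary} shows this tangent flow is unique. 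This pins down $\Sigma$, so the only remaining task is to prove $c_i\,M(T_M)\to C(\Sigma)$.

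First I would pass to the parabolically rescaled flows $\mathcal M_i(t):=c_i\,M(T_M+c_i^{-2}t)$ (defined for $-c_i^2T_M\le t<0$), and note that their time-zero slices are exactly $c_i\,M(T_M)$. By Huisken's monotonicity formula the Gaussian density of the flow at $(0,T_M)$ is finite; this gives, on the one hand, locally uniform area bounds for the $\mathcal M_i$, so that a subsequence converges to the tangent flow, i.e., to $|t|^{1/2}\Sigma$ on $t<0$, with limiting time-zero slice $C(\Sigma)=\lim_{t\to0^-}|t|^{1/2}\Sigma$; and, on the other hand, a bound $\mathcal H^2(M(T_M)\cap B(0,r))\le\Lambda r^2$ for small $r$, hence a locally uniform area bound for the surfaces $c_i\,M(T_M)$. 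Consequently, after a further subsequence, $c_i\,M(T_M)$ converges as Radon measures to some $V$ with $V(B(0,\rho))\le\Lambda\rho^2$ for all $\rho$, so in particular $V(\{0\})=0$.

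It then remains to identify $V$ on $\RR^3\setminus\{0\}$. Fix $p\ne0$. Because $\Sigma$ is a noncompact shrinker with $\Sigma\cap\{z=0\}$ compact, Theorem~\ref{pancake-shrinker-theorem}\eqref{p-shrinker-cone-2} tells me that $C(\Sigma)\setminus\{0\}$ is a smooth graphical cone and that $|t|^{1/2}\Sigma\to C(\Sigma)$ smoothly on compact subsets of $\RR^3\setminus\{0\}$; hence $(p,0)$ is a regular point of the limiting flow. By the local regularity theorem~\cite{white-local}, applied to $\mathcal M_i\to|t|^{1/2}\Sigma$, the convergence is smooth in a spacetime neighborhood of $(p,0)$ up to and including time $0$; restricting to $t=0$ gives $c_i\,M(T_M)\to C(\Sigma)$ smoothly near $p$. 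Since $p$ was an arbitrary point of $\RR^3\setminus\{0\}$, we get $V=C(\Sigma)$ there, and combining with $V(\{0\})=0$ yields $V=C(\Sigma)$. As every subsequential limit is $C(\Sigma)$, the full sequence converges, proving the lemma.

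The main obstacle is the interchange of limits concealed in the last step: convergence of the rescaled flows $\mathcal M_i$ does not, by itself, imply convergence of the single time slice $c_i\,M(T_M)=\mathcal M_i(0)$. The remedy is precisely the two-part argument above — the local regularity theorem away from the origin to upgrade flow convergence to smooth convergence of the time-zero slices, together with the finite-density area bound to control the one remaining point — which is the same mechanism used in the proof of Lemma~\ref{special-lemma}. One should also take a little care that the time-zero slice of the limiting tangent flow is genuinely $C(\Sigma)$ and not a proper subset of it; this holds because the flow is continuous from below and $C(\Sigma)$ is by definition $\lim_{t\to0}|t|^{1/2}\Sigma$.
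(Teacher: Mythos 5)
Your proposal is correct and follows essentially the same route as the paper, whose proof is just the two-line version of your first step: parabolically rescale the flow about the spacetime singularity $(0,T_M)$ by $c_i$ and pass to a subsequential tangent flow whose time-zero slice is $C(\Sigma)$. The extra work you do (uniqueness of the tangent flow via Corollary~\ref{unique-corollary}, and upgrading flow convergence to convergence of the fixed time-zero slices via local regularity away from the origin plus a density bound at the origin) correctly supplies details the paper leaves implicit, using the same mechanism as in Lemma~\ref{special-lemma}.
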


\begin{proof}
Consider the flow
\[
t\in [-T_M,0] \mapsto M(T_M + t).
\]
Dilate parabolically by $(p,t)\mapsto (c_i p, c_i^2 t)$
to get a flow $M_i$.  After passing to a subsequence, the $M_i(\cdot)$ converge
to a flow
\[
t\in (-\infty,0]
\mapsto
\begin{cases}
|t|^{1/2}\Sigma   &(t<0), \\
C(\Sigma)    &(t=0).
\end{cases},
\]
where $\Sigma\in \Ss_g$.
\end{proof}

If $K$ is a subset of $\RR^3$, we let
\begin{equation}\label{eps-definition}
\eps(K) = \limsup_{p\in K, \, p\to 0} \frac{z(p)}{|p|}.
\end{equation}
Note that if $C\subset \RR^3$ is cone (i.e., if $C=s C$ for all $s>0$), then
\[
 \eps(C) = \max_{p\in C, \, |p|=1}\frac{z(p)}{|p|} = \max_{p\in C, \, p\ne 0} \frac{z(p)}{|p|}.
\]

\begin{corollary}\label{cone-corollary}
Suppose that $M$ is a critical $g$-wheel and let $M(t)$, $t\in [0,T_M)$
be the associated mean curvature flow.   Then there is a $\Sigma_g\in \Ss_g$
such that
\[
    \eps(C(\Sigma_g)) = \eps(M(T_M)).
\]
\end{corollary}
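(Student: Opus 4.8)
The plan is to prove the two inequalities $\eps(C(\Sigma_g))\le\eps(M(T_M))$ and $\eps(C(\Sigma_g))\ge\eps(M(T_M))$ separately, in each case by rescaling $M(T_M)$ about the origin and applying Lemma~\ref{cone-lemma}. Recall that for a critical $g$-wheel the origin is the only spacetime singular point (this is case~\eqref{tri-3} of the Trichotomy Theorem~\ref{2nd-trichotomy-theorem}), so $M(T_M)$ is a closed surface, smooth away from $0$, with $0\in M(T_M)$; in particular $\eps(M(T_M))$ is finite, since $z(p)/|p|\le 1$ everywhere. By Corollary~\ref{unique-corollary} the tangent flow at $(0,T_M)$ is unique, so there is a single $\Sigma_g\in\Ss_g$ playing the role of the shrinker, and in Lemma~\ref{cone-lemma} no choice of subsequence affects which $\Sigma_g$ appears; I will fix this $\Sigma_g$ and fix one subsequence below.

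For $\eps(C(\Sigma_g))\ge\eps(M(T_M))$: using the definition of $\eps(M(T_M))$ as a $\limsup$, choose $p_i\in M(T_M)$ with $p_i\to 0$ and $z(p_i)/|p_i|\to\eps(M(T_M))$, and set $c_i=1/|p_i|\to\infty$, so $|c_ip_i|=1$ and $z(c_ip_i)=z(p_i)/|p_i|\to\eps(M(T_M))$. By Lemma~\ref{cone-lemma}, after passing to a subsequence $c_iM(T_M)\to C(\Sigma_g)$. The points $c_ip_i$ lie on the unit sphere, so after a further subsequence $c_ip_i\to q$ with $|q|=1$; since $c_ip_i\in c_iM(T_M)$, the point $q$ lies in $C(\Sigma_g)$. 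Hence, using the formula for $\eps$ on a cone recalled just before this corollary, $\eps(C(\Sigma_g))\ge z(q)/|q|=z(q)=\eps(M(T_M))$. For the reverse inequality, that same formula also gives a point $q'\in C(\Sigma_g)$ with $|q'|=1$ and $z(q')=\eps(C(\Sigma_g))$ (nonemptiness of $C(\Sigma_g)\cap\{|p|=1\}$ follows from Theorem~\ref{pancake-shrinker-theorem}, which says $C(\Sigma_g)\setminus\{0\}$ is a smooth graph over $\RR^2\setminus\{0\}$). Along the same subsequence, pick $q_i\in c_iM(T_M)$ with $q_i\to q'$; then $q_i/c_i\in M(T_M)$, $q_i/c_i\to 0$, and $z(q_i/c_i)/|q_i/c_i|=z(q_i)/|q_i|\to z(q')/|q'|=\eps(C(\Sigma_g))$. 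Since this exhibits a sequence in $M(T_M)$ converging to $0$ along which the ratio $z/|\cdot|$ tends to $\eps(C(\Sigma_g))$, we get $\eps(M(T_M))\ge\eps(C(\Sigma_g))$. Combining the two bounds yields the corollary.

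The one point requiring care is what exactly Lemma~\ref{cone-lemma} delivers: in the argument above I use that $c_iM(T_M)\to C(\Sigma_g)$ in the local Hausdorff topology on $\RR^3$, i.e., every point of $C(\Sigma_g)$ is a limit of points of $c_iM(T_M)$ and, conversely, every subsequential limit of points of $c_iM(T_M)$ lies in $C(\Sigma_g)$. Away from the origin this follows from the proof of Lemma~\ref{cone-lemma} together with smooth, multiplicity-one convergence of the rescaled flows at negative times (White's local regularity theorem, using multiplicity one from the Bamler--Kleiner theory of \S\ref{bamler-kleiner-section}), followed by letting $t\to 0^-$ and invoking Theorem~\ref{pancake-shrinker-theorem}\eqref{p-shrinker-cone-2}; near the origin it is immediate, since $0$ is the unique singular point and hence $0\in c_iM(T_M)$ for every $i$ and $0\in C(\Sigma_g)$. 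I expect this to be the only genuine obstacle; everything else is routine manipulation of the $\limsup$ defining $\eps$.
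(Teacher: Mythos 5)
Your proof is correct and is exactly the intended argument: the paper states the corollary without proof as an immediate consequence of Lemma~\ref{cone-lemma}, obtained by rescaling by $c_i=1/|p_i|$ along a sequence realizing the $\limsup$ in~\eqref{eps-definition} and using local Hausdorff convergence of $c_iM(T_M)$ to the cone in both directions. Your added care about the mode of convergence (smooth multiplicity-one away from the origin via Bamler--Kleiner and Theorem~\ref{pancake-shrinker-theorem}\eqref{p-shrinker-cone-2}) and your use of a single fixed subsequence for both inequalities are precisely the right points to check.
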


\begin{theorem}\label{fat-theorem}
There is an $\eps>0$ with the following property.
Suppose that $K$ is a compact subset of $\RR^3$ such that
\begin{enumerate}
\item\label{fat-1} $\eps(K)\le \eps$.
\item\label{fat-2} $K$ contains a disk $D$ in $\{z=0\}$ centered at the origin.
\item\label{fat-3} $\partial D$ is in the interior of $K$.
\end{enumerate}
Then $\partial K$ fattens immediately under level set flow.
In particular, $F_t(\partial K)$ contains $0$ in its interior for all sufficiently 
small $t>0$.
\end{theorem}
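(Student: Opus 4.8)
The plan is to show that, writing $u$ for the level set function of $\partial K$, one has $u(\cdot,t)\equiv 0$ on a ball about the origin for every small $t>0$; this is exactly the assertion $0\in\interior F_t(\partial K)$, and it is proved by a ``popping'' argument in the spirit of \cite{white-size}.

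First I would extract the local picture near $0$ from the hypotheses. By \eqref{fat-1}, after shrinking there is $\rho_0>0$ with $K\cap B(0,\rho_0)\subseteq\{z\le 2\eps\sqrt{x^2+y^2}\}$, so near the origin the ``top'' of $\partial K$ is squeezed against, and pinches down to, the multiplicity-two plane $\{z=0\}$; and since $\{z=0\}\cap B(0,\rho_0)\subseteq K$ by \eqref{fat-2}, that top sheet stays (weakly) above $\{z=0\}$ there. By \eqref{fat-2}--\eqref{fat-3} there are $0<r_0<\rho_0$ and $\delta_0>0$ with $D_0:=\{z=0,\ x^2+y^2\le r_0^2\}\subseteq K$ and with a solid-torus neighborhood of $\partial D_0$ of radius $2\delta_0$ contained in $\interior K$. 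Thus near the origin $\partial K$ is, to leading order, a very shallow conical sheet hugging $\{z=0\}$ from above, with the flat minimal disk $D_0$ sitting just inside $K$ beneath it --- after a parabolic rescaling centered at $0$, a configuration $C^0$-close to a multiplicity-two plane.

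Next comes the popping step, which is the assertion that the level set flow of such a configuration cannot decide between (i) the thin region between the shallow sheet and $D_0$ opening up, which leaves the origin in the interior flow, and (ii) that region closing off by the sheet passing through $D_0$, which leaves the origin in the exterior flow. Concretely one constructs a subsolution $v$ and a supersolution $w$ of the level set equation, $v\le u\le w$, with $v(\cdot,t)\ge 0$ and $w(\cdot,t)\le 0$ on a common ball $B(0,\rho(t))$ for $0<t<t_1$; then $u(\cdot,t)\equiv 0$ there. For $v$ one flows a surface obtained from $\partial K$ by pulling its two near-flat sheets slightly into $K$, so that the enclosed region lies inside $\interior K$ and still pinches at $0$ --- it then opens up and overruns $B(0,\rho(t))$, forcing $v(\cdot,t)>0$ on that ball. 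For $w$ one flows a ``crossing'' comparison surface obtained by letting the shallow sheet of $\partial K$ pass below $D_0$ near the origin and reconnecting; the existence of such a competing, non-self-similar solution for a configuration this close to a multiplicity-two plane is exactly the phenomenon established by Angenent, Chopp, and Ilmanen \cite{aci}, and this is where \eqref{fat-1} is used --- smallness of $\eps$ is what makes the crossing solution exist and move only slowly near $0$, so that $\rho(t)$ stays positive. The flat minimal disk $D_0$, with $\partial D_0$ safely in $\interior K$, is both the ``second sheet'' across which the crossing occurs and the object to which $w$'s comparison surface is anchored away from $0$.

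The main obstacle is the construction and verification of this crossing supersolution at the nearly flat cone point of $\partial K$: one must pass from the exact-multiplicity-two-plane picture of \cite{aci} to the genuine surface $\partial K$, which is only asymptotically conical near $0$, only $C^{1,1}$, and controlled only from above, splice the model flow to $\partial K$ and to $D_0$ away from the origin, and check that the splice really is a supersolution bracketing $F_t(\partial K)$ on a fixed neighborhood of $0$ --- quantifying this is what pins down how small $\eps$ must be taken. Granting it, $\partial K$ fattens immediately with $0\in\interior F_t(\partial K)$ for all small $t>0$. For the application to Theorem~\ref{intro-theorem}\eqref{intro-theorem-part-3}: by Corollary~\ref{cone-corollary} and Theorem~\ref{flat-cone-theorem}, if $K$ is the region bounded by $M_g(T)$ then $\eps(K)=\eps(M_g(T))=\eps(C(\Sigma_g))\to 0$ as $g\to\infty$, so \eqref{fat-1} holds once $g$ is large; and $M_g(T)\cap\{z=0\}=\Gout(T)\cup\{0\}$ by Corollary~\ref{origin-corollary}, which gives \eqref{fat-2}--\eqref{fat-3}. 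Hence $M_g$ fattens at its first singular time.
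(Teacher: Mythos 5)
Your outline reaches the right conclusion from the right heuristic picture (the ``popping'' dichotomy), but the step you defer --- the construction of the crossing supersolution $w$ --- is not a technical verification you can grant; it is the entire content of the theorem. A supersolution with $\{w(\cdot,0)\ge 0\}\supseteq K$ and $w(\cdot,t)\le 0$ on a ball about $0$ is precisely a weak set flow starting outside $K$ that sweeps past the origin, i.e.\ exactly the ``sheets pass through each other'' evolution whose existence you are trying to prove. The appeal to \cite{aci} does not supply it: that paper gives a criterion for a \emph{self-shrinker} near the multiplicity-two plane to fatten (verified numerically for specific examples), not a barrier adapted to an arbitrary compact set $K$ with $\eps(K)\le\eps$. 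Note in particular that $K$ here is only assumed compact --- $\partial K$ need not be a surface at all, so ``pulling the two near-flat sheets of $\partial K$ into $K$'' and ``splicing the model flow to $\partial K$'' have no meaning at the stated level of generality. Finally, the $\eps$ in the theorem is not claimed to be effective, which is a hint that no quantitative barrier construction is intended.

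The paper avoids the construction entirely by a soft compactness argument, and you should compare your plan with it. First (Lemma~\ref{fat-lemma}) one shows the dichotomy: either $\partial K$ fattens immediately, or else, letting $K_n=\{p\in K:\dist(p,K^c)\ge 1/n\}$ and $t_n$ be the first time $\dist(0,F_t(K_n))\le t^{1/2}$, one has $t_n\to 0$; parabolically rescaling $K_n$ by $t_n^{-1/2}$ and passing to a limit produces a weak set flow $Q$ with $Q(0)\subset V_\eps$, $\dist(0,Q(t))\ge t^{1/2}$, and $\dist(0,Q(1))=1$. (The ``easy half'' --- that $0\in\interior F_t(K)$ --- comes from flowing $D$ together with a tubular neighborhood of $\partial D$ inside $K$, which is the correct general-$K$ version of your subsolution $v$.) Then, assuming the theorem fails for every $\eps$, one lets $\eps\to 0$: the flows $Q^{\eps}$ converge to a weak set flow $Q'$ with $Q'(0)\subset\{z=0\}$, hence $Q'(t)\subset\{z=0\}$ for all $t$, yet $\dist(0,Q'(1))=1$; a weak set flow that is a nonempty proper subset of a plane at some instant must instantly vanish, a contradiction. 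So where you propose to build an explicit competing evolution, the paper shows that the \emph{failure} of fattening would force a degenerate planar weak set flow that cannot exist. If you want to salvage your barrier approach, you must actually produce $w$ for general $K$, and that is where the real work lies.
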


Recall that $F_t(S)$ is the result of flowing a set $S$ for time $t$ by the level set flow.

\begin{lemma}\label{fat-lemma}
Suppose that $K$ is a compact set with properties~\eqref{fat-1}, \eqref{fat-2}, and~\eqref{fat-3}
in Theorem~\ref{fat-theorem}.
Then either $\partial K$ fattens immediately,
or there is a weak set flow 
   $t\in [0,\infty)\mapsto Q(t)$ such that 
\begin{enumerate}
\item\label{fat-L1} $Q(0)\subset V_\eps:=\{p: |z(p)|\le \eps\,|p| \}$,
\item\label{fat-L2} $\dist(0,Q(t)) \ge t^{1/2}$ for $t\in [0,1]$, and
\item\label{fat-L3} $\dist(0,Q(1)) = 1$.
\end{enumerate}
\end{lemma}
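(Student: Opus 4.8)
The plan is to build the ``barrier'' set $Q(t)$ essentially out of a self-shrinking sphere (or, what amounts to the same thing, out of a spherical cap shrinking from the side), placed so that it initially sits inside the slab $V_\eps$ and stays away from the origin at the shrinker rate. Concretely, suppose $\partial K$ does \emph{not} fatten immediately. Then the level set flow $t \mapsto F_t(\partial K)$ is, for small $t>0$, a ``thin'' set (a closed hypersurface, up to a lower-dimensional remainder), and one knows that $F_t(\partial K)$ separates the origin from the complement of $K$; let $Q(t)$ be a suitable weak set flow contained in $F_t(\partial K)$ together with the frozen-in part of $\partial K$, chosen so that $Q(0) \subset \partial K \cap V_\eps$. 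Because $\partial D \subset \operatorname{interior}(K)$ and $D$ is a disk in $\{z=0\}$ centered at $0$, the origin lies well inside $K$ and $Q(0)$ lies near $\partial K \cap V_\eps$; this gives property~\eqref{fat-L1}.

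For~\eqref{fat-L2}, the tool is the avoidance/comparison principle for weak set flows against shrinking spheres: if $S(t)$ is a round sphere of radius $\rho(t) = (\rho_0^2 - 4t)^{1/2}$ (a self-shrinker up to the scaling normalization, solving $\rho \rho' = -2$) centered at a point $q$ with $|q|$ small, and if $S(0)$ is disjoint from $Q(0)$ and encloses the origin, then $S(t)$ and $Q(t)$ remain disjoint, so $Q(t)$ stays outside the ball of radius $\rho(t)$ about $q$. Optimizing the choice of $q$ and $\rho_0$ — using that $Q(0) \subset V_\eps$ keeps $Q(0)$ at Euclidean distance comparable to its radial distance from the origin — one extracts $\dist(0, Q(t)) \ge t^{1/2}$ for $t \in [0,T]$ provided $\eps$ is chosen small enough (this is where the smallness of $\eps$ in the statement of the Lemma is consumed). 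The rescaled picture: after parabolic rescaling $Q(0)$ looks like (a piece of) the plane $\{z=0\}$ with a small hole, and a shrinking sphere started just outside slides along without ever catching up to the plane faster than rate $t^{1/2}$.

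Finally, \eqref{fat-L3} is a normalization: by rescaling the whole configuration parabolically, $(p,t) \mapsto (\lambda p, \lambda^2 t)$, one can arrange $\dist(0,Q(1)) = 1$ exactly, at the cost of replacing $K$ by $\lambda K$ — which still satisfies \eqref{fat-1}, \eqref{fat-2}, \eqref{fat-3} since those conditions are scale-invariant (note $\eps(\lambda K) = \eps(K)$). Equivalently, one runs the flow until the first time $t_0$ that $\dist(0,Q(t_0)) = 1$, which is finite and positive by \eqref{fat-L1}--\eqref{fat-L2}, and then rescales by $t_0^{-1/2}$; the lower bound \eqref{fat-L2} is preserved under this rescaling on the interval $[0,1]$.

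The main obstacle I expect is setting up $Q(t)$ correctly: one must produce a genuine weak set flow (not merely a formal barrier) that lies in $F_t(\partial K)$, starts in $V_\eps$, and encloses the origin — this requires knowing that ``$\partial K$ does not fatten immediately'' forces $F_t(\partial K)$ to be a well-behaved evolving hypersurface separating $0$ from $\infty$, and then carefully selecting the relevant component or piece. The comparison with shrinking spheres is then routine via the standard avoidance principle for weak set flows, but the geometric bookkeeping — translating ``$Q(0)\subset V_\eps$ and $0 \in \operatorname{interior}(K)$'' into an explicit admissible choice of comparison sphere that yields the clean exponent $t^{1/2}$ — is the part that needs care and is where the quantitative threshold for $\eps$ is pinned down.
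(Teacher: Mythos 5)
Your proposal does not reach the actual mechanism of the lemma, and each of the three conclusions is left with a genuine gap. The paper's proof introduces the inner approximations $K_n=\{p\in K:\dist(p,K^c)\ge 1/n\}$ and defines $t_n$ to be the \emph{first} time $t$ with $\dist(0,F_t(K_n))\le t^{1/2}$. The dichotomy is then: if $\tau=\lim t_n>0$, the ball $B(0,\min\{\dist(0,(F_t(K))^c),\,t^{1/2}\})$ lies in $F_t(K)$ (because $K$ contains the thickened disk $\Aa$ around $D$, so $0\in\interior(F_t(K))$ for small $t$) but misses every $F_t(K_n)$, hence lies in $F_t(\partial K)$ --- fattening; if $\tau=0$, one parabolically rescales the flows $F_\cdot(K_n)$ by $t_n^{-1/2}$ and passes to a subsequential limit $Q$. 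Your sketch contains neither the inner approximation nor the blow-up at the origin, and these are not optional.

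Concretely: (i) Conclusion~\eqref{fat-L1} places $Q(0)$ in the \emph{cone} $V_\eps$, but the hypothesis $\eps(K)\le\eps$ is only a $\limsup$ as $p\to 0$; away from the origin $\partial K$ need not be anywhere near $V_\eps$, so ``$Q(0)\subset\partial K\cap V_\eps$'' neither follows from the hypotheses nor defines a weak set flow. The containment in $V_\eps$ can only come from rescaling $K$ at the origin by factors $t_n^{-1/2}\to\infty$. (ii) Conclusion~\eqref{fat-L3} is not a free normalization: arranging $\dist(0,Q(1))=1$ compatibly with~\eqref{fat-L2} requires a time at which the distance actually \emph{equals} $t^{1/2}$, and producing such a time is exactly the content of the non-fattening alternative; your avoidance argument with shrinking spheres could only yield lower bounds and cannot certify attainment. (Indeed,~\eqref{fat-L2} needs no comparison principle at all --- it holds by the definition of $t_n$ as a first time.) (iii) You claim the smallness of $\eps$ is ``consumed'' here, but the lemma holds for every $\eps$; the smallness of $\eps$ is used only in the proof of Theorem~\ref{fat-theorem}, where one lets $\eps\to 0$ and contradicts the existence of a nontrivial weak set flow contained in a plane.
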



\begin{proof}[Proof of Lemma~\ref{fat-lemma}]
Let 
\[
   \Aa = D \cup \{p: \dist(p,\partial D) \le \eta\},
\]
where $\eta>0$ is sufficiently small that $\Aa$ is contained in $K$.

Note that there is an $\eta>0$ such that
\[
 0\in \interior(F_t(\Aa)) \quad\text{for all $t\in (0,\eta]$},
\]
and thus
\begin{equation}\label{insider}
  0 \in \interior(F_t(K))  \quad\text{for all $t\in (0,\eta]$}.
\end{equation}

Now let $K_n = \{p\in K: \dist(p,K^c) \ge 1/n\}$.
Let $t_n$ be the first time $t>0$ such that 
\[
   \dist(0, F_t (K_n)) \le t^{1/2}.
\]
(If there is no such time, let $t_n=\infty$.)

Since $K_n \subset K_{n+1}$ for each $n$, the sequence $t_n$ is decreasing
and therefore converges to a limit $\tau\ge 0$.

{\bf Case 1}: $\tau>0$.
Then for $0<t< \min\{\tau, \eta\}$,
\[
  \text{$B(0,r(t)) \subset F_t(K)$ and $B(0,r(t))\cap F_t(K_n)=\emptyset$}
\]
for each $n$, where $r(t) = \min\{\dist(0, (F_t(K))^c),  t^{1/2}\}$, which is $>0$ 
  by~\eqref{insider}.
Thus 
\[
B(0,r(t))\subset F_t(\partial K)
\]
 for $0<t<\min\{\tau,\eta\}$, 
so (in this case) $\partial K$ fattens immediately.

{\bf Case 2}: $\tau=0$.
Let $K_n'$ be the result of dilating $K_n$ by $t_n^{-1/2}$.
Then
\begin{align*}
\dist(0, F_t(K_n')) &> t^{1/2}  \quad \text{for $t\in [0,1)$, and}   \\
\dist(0, F_1(K_n') &= 1.
\end{align*}

After passing to a subsequence, the flows $t\in [0,\infty)\mapsto F_t(K_n')$
converge to a weak set flow $t\in [0,\infty)\mapsto Q(t)$
 that has the properties asserted in the lemma.
\end{proof}

\begin{proof}[Proof of Theorem~\ref{fat-theorem}]
Suppose the theorem is false. Then, by Lemma~\ref{fat-lemma}, for each $\eps>0$,
there exists a weak set flow $t\in[0,\infty)\mapsto Q^\eps(t)$
having properties asserted in the lemma.

Let $\eps(n)>0$ be a sequence tending to $0$.
After passing to a subseqeunce, the weak
set flows $t\in [0,\infty)\mapsto Q^{\eps(n)}(t)$ converge
to a weak set flow $t\in [0,\infty) \mapsto Q'(t)$.   Note that
\[
     Q'(t) \subset \{z=0\}
\]
for $t=0$, and therefore for all $t\ge 0$.
Note also that
\begin{gather*}
\text{$\dist(0,Q'(t)) \ge t^{1/2} \quad (t\in [0,1])$, and}   \\
\dist(0,Q'(1))= 1.
\end{gather*}
But that is impossible, since a weak set flow that is a proper subset of a plane at one instant
immediately becomes empty.
\end{proof}

We can now complete the proof of Theorem~\ref{intro-theorem}:

\begin{theorem}
There is a $\hat g<\infty$ with the following property.
If $g\ge \hat g$ and if $M$ is a critical $g$-wheel,
then $M_g$ fattens at time $T_M$.
Specifically, for all $t>T_{M_g}$ sufficiently close to $T_{M_g}$, the set $F_t(M)$
contains $0$ in its interior.
\end{theorem}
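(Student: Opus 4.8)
The plan is to combine the behavior-as-$g\to\infty$ results (\S\ref{behavior-section}) with the fattening criterion just established (Theorem~\ref{fat-theorem}). First I would observe that for a critical $g$-wheel $M$, the associated flow $M(t)$, $t\in[0,T_M)$, has a single singular point at the origin with tangent shrinker $\Sigma_g\in\Ss_g$, and that $M(T_M)$ (the time-$T_M$ slice, i.e.\ the weak/Brakke limit) is a well-defined compact set containing $0$. The relevant quantity is $\eps(M(T_M))$ as defined in~\eqref{eps-definition}. By Corollary~\ref{cone-corollary} there is a $\Sigma_g\in\Ss_g$ with $\eps(C(\Sigma_g))=\eps(M(T_M))$. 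Since $C(\Sigma_g)=\lim_{t\to0}|t|^{1/2}\Sigma_g$ and, by Theorem~\ref{flat-cone-theorem}, $C(\Sigma_g)\to\{z=0\}$ (with multiplicity $2$) smoothly away from the origin as $g\to\infty$, it follows that $\eps(C(\Sigma_g))\to 0$, hence $\eps(M(T_M))\to0$ as $g\to\infty$. So there is a $\hat g$ such that for $g\ge\hat g$ and any critical $g$-wheel $M$, we have $\eps(M(T_M))\le\eps$, where $\eps$ is the constant from Theorem~\ref{fat-theorem}.

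Next I would verify that $K:=M(T_M)$ (together with the compact region it bounds, if we take $K$ to be that closed region) satisfies hypotheses~\eqref{fat-2} and~\eqref{fat-3} of Theorem~\ref{fat-theorem}: that $K$ contains a disk $D$ in $\{z=0\}$ centered at the origin with $\partial D$ in the interior of $K$. This is where the geometry of $g$-wheels enters. By Corollary~\ref{origin-corollary} (and Lemma~\ref{origin-lemma}), all the \emph{inner} components of $M(t)\cap\{z=0\}$ collapse to $0$ as $t\to T_M$, while $\Gout(t)$ converges to the smooth curve $\Gout(T_M)$ (by specialness). Thus the region of $\{z=0\}$ enclosed by $\Gout(T_M)$ lies in $K=M(T_M)$, and it contains a genuine disk $D$ centered at the origin (the inner boundary curves have disappeared into the origin), with $\partial D$ well inside $\Gout(T_M)$, hence in the interior of $K$. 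Having checked~\eqref{fat-1}, \eqref{fat-2}, \eqref{fat-3}, Theorem~\ref{fat-theorem} applies to $\partial K=M(T_M)$ and yields that $M(T_M)$ fattens immediately under the level set flow, with $F_t(M(T_M))$ containing $0$ in its interior for all small $t>0$.

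Finally I would translate this back to the original surface $M$. Up to time $T_M$, the level set flow of $M$ agrees with the smooth classical flow $M(t)$ (no fattening before the first singular time, by uniqueness of smooth flow), so $F_{T_M}(M)=M(T_M)$. The semigroup property of the level set flow then gives, for $t>T_M$ close to $T_M$, that $F_t(M)\supset F_{t-T_M}(M(T_M))$, which contains $0$ in its interior by the previous paragraph; hence $M$ fattens at time $T_M$. I expect the main obstacle to be the bookkeeping in this last step: making precise the identification $F_{T_M}(M)=M(T_M)$ and ensuring the ``immediate fattening of $M(T_M)$'' statement of Theorem~\ref{fat-theorem} transfers correctly under the time-shift (in particular that the weak set flow started from $M(T_M)$ is the restriction of the level set flow of $M$), together with confirming that the disk $D$ guaranteed by the $g$-wheel geometry really is centered at the origin rather than merely containing it in its closure — this uses the rotational ($\Rr_{2\pi/g}$) symmetry of $M(T_M)$, which forces the enclosed region to be a rotationally symmetric neighborhood of $0$.
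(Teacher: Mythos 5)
Your proposal is correct and follows essentially the same route as the paper: use Corollary~\ref{cone-corollary} together with Theorem~\ref{flat-cone-theorem} to get $\eps(M(T_M))\le\eps$ for all critical $g$-wheels once $g$ is large, check that the compact region bounded by $M(T_M)$ contains a disk in $\{z=0\}$ centered at $0$ with its boundary circle in the interior (which the paper notes via $R=\dist(0,\Gout(M(T_M)))>0$), and then invoke Theorem~\ref{fat-theorem}. Your extra care about the semigroup property and the identification $F_{T_M}(M)=M(T_M)$ is a reasonable elaboration of a step the paper leaves implicit, not a different argument.
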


\begin{proof}
Let $\eps>0$ be as in Theorem~\ref{fat-theorem}.
By Theorem~\ref{flat-cone-theorem}, there is a $\hat g$ with the following property:
if $g\ge \hat g$ and if $\Sigma\in \Ss_g$, then 
\[
   C(\Sigma) \subset V_\eps.
\]
Now suppose that $g\ge \hat g$.

By Corollary~\ref{cone-corollary}, there is a $\Sigma\in \Ss_g$ such that 
\[
    \eps(C(\Sigma)) = \eps(M(T_M)).
\]
Thus
\[
  \eps(M(T_M))  = \eps(C(\Sigma)) \le \eps(V_\eps)=\eps.
\]
Hence by Theorem~\ref{fat-theorem}, $M(T_M)$ fattens immediately.

(Note that $R:=\dist(0, \Gout(M(T_M)) > 0$, and if $D$ is a disk in $\{z=0\}$
with center $0$ and radius $<R$, then $D$ is in the compact region $K$
bounded by $M(T_M)$, and $D\setminus \{0\}$ lies in the interior of $K$.)
\end{proof}

\nocite{pedrosa-ritore}
\nocite{hoffman-wei}
\newcommand{\hide}[1]{}

\begin{bibdiv}

\begin{biblist}

\bib{aci}{article}{
   author={Angenent, S.},
   author={Chopp, D. L.},
   author={Ilmanen, T.},
   title={A computed example of nonuniqueness of mean curvature flow in
   $\bold R^3$},
   journal={Comm. Partial Differential Equations},
   volume={20},
   date={1995},
   number={11-12},
   pages={1937--1958},
   issn={0360-5302},
   review={\MR{1361726}},
   doi={10.1080/03605309508821158},
}

\bib{bamler-kleiner}{article}{
      title={On the Multiplicity One Conjecture for Mean Curvature Flows of surfaces}, 
      author={Bamler, Richard},
      author={Kleiner, Bruce},
      year={2023},
      eprint={2312.02106},
      archivePrefix={arXiv},
      primaryClass={math.DG},
       eprint={https://arxiv.org/abs/2312.02106}, 
}

\bib{brendle}{article}{
   author={Brendle, Simon},
   title={Embedded self-similar shrinkers of genus 0},
   journal={Ann. of Math. (2)},
   volume={183},
   date={2016},
   number={2},
   pages={715--728},
   issn={0003-486X},
   review={\MR{3450486}},
   doi={10.4007/annals.2016.183.2.6},
}

\bib{chen-giga-goto}{article}{
   author={Chen, Yun Gang},
   author={Giga, Yoshikazu},
   author={Goto, Shun'ichi},
   title={Uniqueness and existence of viscosity solutions of generalized
   mean curvature flow equations},
   journal={J. Differential Geom.},
   volume={33},
   date={1991},
   number={3},
   pages={749--786},
   issn={0022-040X},
   review={\MR{1100211}},
}

\bib{chodosh-et-al}{article}{
      title={Mean Curvature Flow from Conical Singularities}, 
      author={Chodosh, Otis},
      author={Daniels-Holgate, J. M.}, 
      author={Schulze, Felix},
      year={2023},
      eprint={https://arxiv.org/abs/2312.00759}, 
      archivePrefix={arXiv},
      primaryClass={math.DG}
}

\bib{chodosh-schulze}{article}{
   author={Chodosh, Otis},
   author={Schulze, Felix},
   title={Uniqueness of asymptotically conical tangent flows},
   journal={Duke Math. J.},
   volume={170},
   date={2021},
   number={16},
   pages={3601--3657},
   issn={0012-7094},
   review={\MR{4332673}},
   doi={10.1215/00127094-2020-0098},
}

\bib{choi-h-h-w}{article}{
   author={Choi, Kyeongsu},
   author={Haslhofer, Robert},
   author={Hershkovits, Or},
   author={White, Brian},
   title={Ancient asymptotically cylindrical flows and applications},
   journal={Invent. Math.},
   volume={229},
   date={2022},
   number={1},
   pages={139--241},
   issn={0020-9910},
   review={\MR{4438354}},
   doi={10.1007/s00222-022-01103-2},
}

\bib{chopp}{article}{
   author={Chopp, David L.},
   title={Computation of self-similar solutions for mean curvature flow},
   journal={Experiment. Math.},
   volume={3},
   date={1994},
   number={1},
   pages={1--15},
   issn={1058-6458},
   review={\MR{1302814}},
}

\bib{colding-minicozzi}{article}{
   author={Colding, Tobias Holck},
   author={Minicozzi, William P., II},
   title={Uniqueness of blowups and \L ojasiewicz inequalities},
   journal={Ann. of Math. (2)},
   volume={182},
   date={2015},
   number={1},
   pages={221--285},
   issn={0003-486X},
   review={\MR{3374960}},
   doi={10.4007/annals.2015.182.1.5},
}

\bib{evans-spruck}{article}{
   author={Evans, L. C.},
   author={Spruck, J.},
   title={Motion of level sets by mean curvature. I},
   journal={J. Differential Geom.},
   volume={33},
   date={1991},
   number={3},
   pages={635--681},
   issn={0022-040X},
   review={\MR{1100206}},
}

\bib{federer-reach}{article}{
   author={Federer, Herbert},
   title={Curvature measures},
   journal={Trans. Amer. Math. Soc.},
   volume={93},
   date={1959},
   pages={418--491},
   issn={0002-9947},
   review={\MR{0110078}},
   doi={10.2307/1993504},
}

\bib{hardt-simon}{article}{
   author={Hardt, Robert},
   author={Simon, Leon},
   title={Boundary regularity and embedded solutions for the oriented
   Plateau problem},
   journal={Ann. of Math. (2)},
   volume={110},
   date={1979},
   number={3},
   pages={439--486},
   issn={0003-486X},
   review={\MR{0554379}},
   doi={10.2307/1971233},
}

\bib{huisken}{article}{
   author={Huisken, Gerhard},
   title={Flow by mean curvature of convex surfaces into spheres},
   journal={J. Differential Geom.},
   volume={20},
   date={1984},
   number={1},
   pages={237--266},
   issn={0022-040X},
   review={\MR{0772132}},
}

\bib{ilmanen-sing}{article}{
author={Ilmanen, Tom},
title={Singularities of Mean Curvature Flow of Surfaces},
date={1995},
eprint={https://people.math.ethz.ch/~ilmanen/papers/pub.html},
 url={https://people.math.ethz.ch/~ilmanen/papers/pub.html},
}

\bib{ketover}{article}{
      title={Self-shrinkers whose asymptotic cones fatten}, 
      author={Daniel Ketover},
      year={2024},
      eprint={2407.01240},
      archivePrefix={arXiv},
      primaryClass={math.DG},
      url={https://arxiv.org/abs/2407.01240}, 
      eprint={https://arxiv.org/abs/2407.01240},
}

\bib{lee-zhao}{article}{
      title={Closed mean curvature flows with asymptotically conical singularities}, 
      author={Lee, Tang-Kai},
      author={Zhao, Xinrui},
      date={2024},
      archivePrefix={arXiv},
      primaryClass={math.DG},
      eprint={https://arxiv.org/abs/2405.15577}, 
}

\bib{schulze}{article}{
   author={Schulze, Felix},
   title={Uniqueness of compact tangent flows in mean curvature flow},
   journal={J. Reine Angew. Math.},
   volume={690},
   date={2014},
   pages={163--172},
   issn={0075-4102},
   review={\MR{3200339}},
   doi={10.1515/crelle-2012-0070},
}

\bib{simon-gmt}{article}{
   author={Simon, Leon},
   title={Lectures on geometric measure theory},
   journal={Proceedings of the Centre for Mathematical Analysis, Australian National University},
   volume={3},
   date={1983},
   pages={vii+272},
   eprint={https://openresearch-repository.anu.edu.au/items/7151bcb4-6ce1-4958-8576-049e09dc79b1},
   isbn={0-86784-429-9},
  review={\MR{0756417}},
}

\bib{solomon-white}{article}{
   author={Solomon, Bruce},
   author={White, Brian},
   title={A strong maximum principle for varifolds that are stationary with
   respect to even parametric elliptic functionals},
   journal={Indiana Univ. Math. J.},
   volume={38},
   date={1989},
   number={3},
   pages={683--691},
   issn={0022-2518},
  review={\MR{1017330}},
   doi={10.1512/iumj.1989.38.38032},
}

\bib{white-size}{article}{
   author={White, Brian},
   title={The size of the singular set in mean curvature flow of mean-convex
   sets},
   journal={J. Amer. Math. Soc.},
   volume={13},
   date={2000},
   number={3},
   pages={665--695},
   issn={0894-0347},
   review={\MR{1758759}},
   doi={10.1090/S0894-0347-00-00338-6},
}

\bib{white-ICM}{article}{
   author={White, Brian},
   title={Evolution of curves and surfaces by mean curvature},
   conference={
      title={Proceedings of the International Congress of Mathematicians,
      Vol. I},
      address={Beijing},
      date={2002},
   },
   book={
      publisher={Higher Ed. Press, Beijing},
   },
   isbn={7-04-008690-5},
   date={2002},
   pages={525--538},
   review={\MR{1989203}},
}

\bib{white-local}{article}{
   author={White, Brian},
   title={A local regularity theorem for mean curvature flow},
   journal={Ann. of Math. (2)},
   volume={161},
   date={2005},
   number={3},
   pages={1487--1519},
   issn={0003-486X},
   review={\MR{2180405}},
   doi={10.4007/annals.2005.161.1487}
}

\bib{white-isoperimetric}{article}{
   author={White, Brian},
   title={Which ambient spaces admit isoperimetric inequalities for
   submanifolds?},
   journal={J. Differential Geom.},
   volume={83},
   date={2009},
   number={1},
   pages={213--228},
   issn={0022-040X},
   review={\MR{2545035}},
}

\bib{white-controlling}{article}{
   author={White, Brian},
   title={Controlling area blow-up in minimal or bounded mean curvature
   varieties},
   journal={J. Differential Geom.},
   volume={102},
   date={2016},
   number={3},
   pages={501--535},
   issn={0022-040X},
   review={\MR{3466806}},
}

\bib{white-compact}{article}{
   author={White, Brian},
   title={On the compactness theorem for embedded minimal surfaces in
   3-manifolds with locally bounded area and genus},
   journal={Comm. Anal. Geom.},
   volume={26},
   date={2018},
   number={3},
   pages={659--678},
   issn={1019-8385},
   review={\MR{3844118}},
   doi={10.4310/CAG.2018.v26.n3.a7},
}

\bib{white-mcf-boundary}{article}{
   author={White, Brian},
   title={Mean curvature flow with boundary},
   journal={Ars Inven. Anal.},
   date={2021},
   pages={Paper No. 4, 43},
   review={\MR{4462472}},
}

\end{biblist}

\end{bibdiv}

\end{document}